\newtheorem{Def}{Definition}
\newtheorem{Lma}{Lemma}
\newtheorem*{Lma*A}{Lemma A}
\newtheorem*{Lma*B}{Lemma B}
\newtheorem{Thm}{Theorem}
\newtheorem*{Cor*cpt}{Corollary \ref{cpt}}
\newtheorem*{Cor*ancient}{Corollary \ref{ancient}}
\newtheorem*{Thm*MT}{Theorem \ref{MT}}
\newtheorem*{Thm*MTL}{Theorem \ref{MTL}}
\newtheorem*{Thm*MTLh}{Theorem \ref{MTLh}}
\newtheorem*{Thm*bounded}{Theorem \ref{bounded}}
\newtheorem*{Cor*GRS}{Corollary \ref{GRS}}
\newtheorem*{Thm*decay}{Theorem \ref{decay}}
\newtheorem*{Thm*taming}{Theorem \ref{taming}}
\newtheorem*{Thm*KMW}{Theorem \ref{KMW}}
\newtheorem*{Thm*ShiG}{Theorem \ref{ShiG}}
\newtheorem*{Thm*ShiL}{Theorem \ref{ShiL}}
\newtheorem{Prop}{Proposition}
\newtheorem{Cor}{Corollary}
\newtheorem{Rmk}{Remark}
\newtheorem*{Rmk*}{Remark}
\begin{document}
\title[Shi-type estimates based on Ricci curvature]
{Shi-type estimates of the Ricci flow based on Ricci curvature}
\author{Chih-Wei Chen}
\address{National Center for Theoretical Sciences(NCTS), Taiwan}
\email{BabbageTW@gmail.com}
\dedicatory{Dedicated to Professor G\'erard Besson on the occasion of his 60th birthday}
\date{7 November, 2016; revised 25 January, 2018}
\subjclass[2010]{Primary 53C44; Secondary 58J05}
\keywords{Ricci flow, Ricci curvature, Shi-type estimate}

\begin{abstract}

We prove that the magnitude of the derivative of Ricci curvature can be uniformly controlled by the bounds of Ricci curvature and injectivity radius along the Ricci flow. As a consequence, a precise uniform local bound of curvature operator can be constructed from local bounds of Ricci curvature and injectivity radius among all $n$-dimensional Ricci flows. In particular, we show that every Ricci flow with $|Ric|\leq K$ must satisfy
$|Rm|\leq Ct^{-1}$ for all $t\in (0,T]$, where $C$ depends only on the dimension $n$, and 
$T$ depends on $K$ and the injectivity radius $inj_{g(t)}$. 

In the second part of this paper, we discuss the behavior of Ricci curvature and its derivative when the injectivity radius is thoroughly unknown. In particular, another Shi-type estimate for Ricci curvature is derived when the derivative of Ricci curvature is controlled by the derivative of scalar curvature.
\end{abstract}

\maketitle

\section{Introduction}

The Ricci flow on a Riemannian manifold $(M,g_0)$, which was proposed by R. Hamilton in \cite{Hamilton82}, is defined by 
$$
\left\{
\begin{array}{rcl}
\frac{\partial}{\partial t}g(x,t)  &=&-2 Ric_g(x,t)\\
g(x,0)&=& g_0
\end{array}
\right.
$$
and is presumed to be able to improve the Riemannian metric $g_0$. 
Hamilton showed that a Riemannian metric with positive Ricci curvature on a closed $3$-dimensional manifold can be deformed to be rounder and rounder along the Ricci flow. Indeed, by using interpolation techniques, he derived bounds for all derivatives of the curvature tensor and showed that the metrics $g(t)$, after rescaling, converge in $C_{loc}^\infty$-topology to the standard metric on sphere. Later in \cite{Shi89}, W.-X. Shi showed that, for general Ricci flows, all derivatives of curvature are bounded a priori by the bound of the curvature itself. 
Precisely, if a Ricci flow $g(t)$ satisfies $|Rm|\leq K$ for all $t\in[0,\frac{1}{K}]$ on a closed manifold $M^n$, then $|\nabla^l Rm|\leq C_{n,l}Kt^{-l/2}$ for all $t\in(0,\frac{1}{K}]$, where $C_{n,l}$ denotes a constant depending only on $n$ and $l$. 
This estimate, which is called Shi's estimate, even holds locally for complete non-compact Ricci flows. 
That is, there exists a constant $\theta$ depending only on the dimension $n$ such that if a Ricci flow $g(t)$ satisfies $|Rm|\leq K$ on $B_{2r}\times[0,\frac{\theta}{K}]$, then $|\nabla^l Rm|\leq C_{n,l}K(K+r^{-2}+t^{-1})^{l/2}$ on $B_{r}\times (0,\frac{\theta}{K}]$.
Thus, along the Ricci flow, $C^2$-boundedness implies $C^\infty$-boundedness of metrics.

Some other works revealed that merely the Ricci curvature can control the curvature operator in certain circumstances. 
For instance, by blow-up arguments, N. \v{S}e\v{s}um \cite{Sesum05} and L. Ma and L. Cheng \cite{MaCheng10} showed that along the Ricci flow $|Rm|$ maintains finite value as long as $|Ric|$ does. 
For compact manifolds, B. Wang improved the result of \v{S}e\v{s}um by showing that $|Rm|$ is bounded whenever
$Ric$ has a lower bound and the scalar curvature $R$ has certain space-time integral bound \cite{Wang08}.
He also showed that $R$ must blow up whenever $|Rm|$ blows up at $T$ in the order $o((T-t)^{-2})$ \cite[Theorem 1.3]{Wang12}. See also \cite{EndersMullerTopping, Zhang10, LeSesum} for related results.
The estimates of $|Rm|$ in their results depend on the generic behavior of the metrics $g(t)$. 
On the other hand, a classical result due to M. Anderson \cite{Anderson90} says that $|Rm|$ of a Riemannian manifold can be controlled by the bound of $|Ric|$, $|\nabla Ric|$ and the lower bound of injectivity radius.
Therefore, it is natural to ask whether $|\nabla Ric|$, or $|Rm|$, can be controlled by $|Ric|$ along the Ricci flow. 
This is the main theme of this article. 

First, we confirm that a Shi-type estimate for Ricci tensor holds provided that the injectivity radius 
$inj:M\times[0,T]\to\mathbb{R}_+ $ is bounded from below.

\begin{Thm}[Standard version]\label{MTL}
For any $\delta,\eta>0$ and $n\in\mathbb{N}$, there exist positive constants $\alpha$, $C$ and $\rho$ such that 
for any $K>0$ and any smooth Ricci flow $(M^n,g(t))_{t\in[0,T]}$ with $T\geq\frac{\eta}{K}>0$,
if $$|Ric|\leq K\ \ \mbox{ and }\ \ inj\geq \delta K^{-\frac{1}{2}} \ \ \mbox{ on } B_{4\sqrt{T}}(x_0,t) \mbox{ for all } t\in[0,T],$$ 
then 
$$|\nabla Ric| \leq \alpha  \left( KT t^{-1}\right)^{\frac{3}{2}}\mbox{ on } B_{2\sqrt{T}}(x_0,t) \mbox{ for all } t\in(0,T]$$
and
$$|Rm|\leq CKTt^{-1} \mbox{ on } B_{\rho\sqrt{K^{-1}T^{-1}t}}(x_0,t)\mbox{ for all } t\in(0,T].$$ 
\end{Thm}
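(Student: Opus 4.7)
My plan is to establish the $|\nabla Ric|$ bound first by a blow-up/contradiction argument based on Anderson's theorem, and then to deduce the $|Rm|$ bound by invoking Anderson's theorem again at the natural parabolic scale $\sqrt{t/(KT)}$.

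First I would argue by contradiction, assuming the $|\nabla Ric|$ estimate fails. This produces a sequence of $n$-dimensional Ricci flows $(M_j, g_j(t))_{t \in [0, T_j]}$ satisfying the hypotheses (with fixed $\delta, \eta$) but admitting points $(y_j, t_j)$ with $y_j \in B_{2\sqrt{T_j}}(x_{0,j}, t_j)$ and $|\nabla Ric|_{g_j}(y_j, t_j) > j \cdot (K_j T_j / t_j)^{3/2}$. A Perelman-type point-picking then lets me further assume that $|\nabla Ric|^{2/3}$ is essentially maximal on an expanding parabolic neighbourhood of $(y_j, t_j)$. I then parabolically rescale by $\lambda_j := |\nabla Ric|_{g_j}(y_j, t_j)^{2/3}$, setting $\tilde g_j(s) := \lambda_j \, g_j(t_j + s/\lambda_j)$. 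Since $\lambda_j > j^{2/3} K_j T_j / t_j$, the rescaled flows obey $|\nabla Ric|_{\tilde g_j} \leq 2$ on growing cylinders, $|Ric|_{\tilde g_j} \leq K_j/\lambda_j \to 0$, and $inj_{\tilde g_j} \geq \delta (\lambda_j/K_j)^{1/2} \to \infty$ around $(y_j, 0)$.

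At this point Anderson's theorem, applied time-slice by time-slice, yields a uniform bound on $|Rm|_{\tilde g_j}$ near the base points: the dimensionless quantities $|Ric|_{\tilde g_j}$ and $|\nabla Ric|_{\tilde g_j}$ are controlled while $inj_{\tilde g_j} \to \infty$, so the harmonic radius stays bounded below and the elliptic estimate for $Ric$ in harmonic coordinates supplies the required bound on $|Rm|$. Shi's local estimate then upgrades this to uniform bounds on all higher derivatives of curvature, and Hamilton's Cheeger--Gromov compactness theorem for Ricci flows extracts a smooth pointed limit $(M_\infty, g_\infty(s), x_\infty)$. This limit satisfies $|Ric|_{g_\infty} \equiv 0$ and hence $|\nabla Ric|_{g_\infty} \equiv 0$, directly contradicting the normalisation $|\nabla Ric|_{g_\infty}(x_\infty, 0) = 1$.

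With the $|\nabla Ric|$ bound in hand, the $|Rm|$ bound follows from a second application of Anderson's theorem, this time at the scale $\ell := \sqrt{t/(KT)}$. At this scale the hypotheses become dimensionally balanced: $|Ric|\, \ell^2 \leq t/T \leq 1$, $|\nabla Ric|\, \ell^3 \leq \alpha$, and $inj/\ell \geq \delta (T/t)^{1/2} \geq \delta$, so Anderson produces $|Rm|\, \ell^2 \leq C$, i.e.\ $|Rm| \leq CKT/t$ on $B_{\rho\ell}(x_0, t)$. I expect the main obstacle to be the compactness step: one has to verify that Anderson's constants remain uniform as $j \to \infty$ (which uses $|Ric|_{\tilde g_j} \to 0$) and that the point-picking supplies bounds on $|\nabla Ric|_{\tilde g_j}$ on sufficiently large parabolic neighbourhoods so that, after the Anderson/Shi upgrade, Hamilton's compactness actually applies.
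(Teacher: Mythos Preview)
Your proposal is correct and follows essentially the same strategy as the paper: a blow-up contradiction argument with Perelman-type point-picking, rescaling by $|\nabla Ric|^{2/3}$, Anderson's harmonic-coordinate estimate to bound $|Rm|$ on time slices, Shi's estimate plus Hamilton compactness to reach a Ricci-flat limit contradicting $|\nabla Ric|=1$, and then a second rescaling by $KTt^{-1}$ (your scale $\ell$) together with Anderson again for the $|Rm|$ bound. The only point the paper handles in more detail than you mention is the local point-picking lemma: because the hypotheses only hold on $B_{4\sqrt{T}}(x_0,t)$, one must verify that the successively chosen points $p_k$ remain at distance $<3\sqrt{T}$ from $x_0$ (summing a geometric series of radii) and that their parabolic neighbourhoods stay inside $B_{4\sqrt{T}}(x_0,t)$ using the distance distortion estimate under bounded Ricci curvature, which forces the choice of a small $\epsilon=\epsilon(\eta)$ in the parabolic radius $\beta=\tfrac12\epsilon^2\alpha^{2/3}\eta$.
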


Recently, B. Kotschwar, O. Munteanu and J. Wang \cite{KotschwarMunteanuWang15} 
improved the aforementioned results of \v{S}e\v{s}um and Wang by a different approach based on Moser's iteration.
They can clarify the dependency of the bound, which eventually involves only $|Ric|$ and the initial bound of curvature operator. 
Moser's iteration has been used before for similar purpose in \cite{Yang8788}, and later in \cite{ DaiWeiYe96} where X. Dai, G. Wei and R. Ye showed that $|Rm|$
can be controlled by initial $|Ric|$ and the initial conjugate radius up to a short time. 
However, all of these bounds do not link to the bound of $|Ric|$ in a clear manner as the bound given by our theorem. 
For instance, when $|Ric|$ is arbitrarily small, our theorem ensures that so is $|Rm|$. 

Furthermore, our proof of Theorem \ref{MTL} can be modified so that the scale of injectivity radius can unhook with the bound of Ricci curvature. Thus, we do not need a huge injective region to get the estimate of $|Rm|$ when $K$ is small. 
In particular, our theorem can be applied to flows with arbitrarily small initial injectivity radius and the resulting bound depends mainly on the growth of $inj$ with respect to time (cf. the parameter $m$ in the next theorem). 

\begin{Thm}[Strong version]\label{MTLh}
For any $\delta,\eta>0$ and $m,n\in\mathbb{N}$, there exist positive constants $\alpha, C,\rho$ such that 
for any $K>0$ and any smooth Ricci flow $(M^n,g(t))_{t\in[0,T]}$ with $T\geq\frac{\eta}{K}>0$,
if $$|Ric|\leq K\ \ \mbox{ and }\ \ inj\geq \delta\cdot \min\{  K^{-\frac{1}{2}}, h^{\frac{1}{2}}(t) \} \ \ \mbox{ on } B_{4\sqrt{T}}(x_0,t) \mbox{ for all } t\in[0,T],$$ 
where $h(t)\leq t$ is any positive function defined on $(0,T)$ such that 
for all $t^*\in (0,T)$, $h(t)\geq m^{-1}h(t^*)$ on $t\in\left[\frac{1}{2}t^*,t^*\right]$, 
then 
$$|\nabla Ric|\leq \alpha  \left( KT h^{-1}\right)^{\frac{3}{2}}\mbox{ on } B_{2\sqrt{T}}(x_0,t) \mbox{ for all } t\in(0,T]$$
and 
$$|Rm|\leq CKTh^{-1} \mbox{ on } B_{\rho\sqrt{K^{-1}T^{-1}h(t)}}(x_0,t)\mbox{ for all } t\in(0,T].$$ 
\end{Thm}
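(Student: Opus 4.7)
The plan is to reduce Theorem \ref{MTLh} to Theorem \ref{MTL} by a parabolic rescaling on a short backward window at each space-time point. The slow-decay assumption on $h$ is exactly what permits us to replace the $t$-dependent injectivity bound $\delta\min\{K^{-1/2},h^{1/2}(t)\}$ by a constant lower bound on such a window, so that the standard version can be invoked verbatim.

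Fix $(y,t^{*})$ with $t^{*}\in(0,T]$ and $y\in B_{2\sqrt{T}}(x_{0},t^{*})$. Define the geometric scale
$$R:=\min\bigl\{K^{-1/2},\;m^{-1/2}h^{1/2}(t^{*})\bigr\},$$
and choose a backward window $[t^{*}-\tau,t^{*}]\subset[t^{*}/2,t^{*}]$ with $\tau\in[\eta R^{2},\,t^{*}/4]$. This interval is non-empty once $m\geq 4\eta$, a condition one may enforce by enlarging the output constants (if $R^{2}=K^{-1}$ then automatically $t^{*}\geq h(t^{*})\geq m/K$, and if $R^{2}=m^{-1}h(t^{*})$ then $\eta R^{2}\leq t^{*}/4$ directly). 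On this window the slow-decay hypothesis yields $h(t)\geq h(t^{*})/m$, so $inj_{g(t)}\geq \delta R$ on $B_{4\sqrt{T}}(x_{0},t)\supset B_{4\sqrt{\tau}}(y,t)$.

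Next I would apply the parabolic rescaling $\tilde g(s):=R^{-2}g(t^{*}-\tau+R^{2}s)$ on $s\in[0,\tilde T]$ with $\tilde T:=\tau/R^{2}\geq\eta$. Under this rescaling $|Ric_{\tilde g}|\leq R^{2}K\leq 1$ and $inj_{\tilde g}\geq\delta$, and $4\sqrt{\tilde T}$ in $\tilde g$-distance equals $4\sqrt{\tau}$ in $g$-distance, so Theorem \ref{MTL} applies to $\tilde g$ with parameters $\tilde K=1$ and the same $\delta,\eta$. Evaluating its conclusion at $\tilde s=\tilde T$ yields $|\nabla Ric_{\tilde g}|(y,\tilde T)\leq\alpha$ and $|Rm_{\tilde g}|(y,\tilde T)\leq C$ on the corresponding $\tilde g$-balls. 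Pulling back via the conformal weights ($-3$ for $|\nabla Ric|$, $-2$ for $|Rm|$) and using
$$R^{-2}=\max\bigl\{K,\,m/h(t^{*})\bigr\}\leq \max\{1,\,m/\eta\}\cdot KT/h(t^{*}),$$
(which invokes $T\geq\eta/K$ together with $h(t^{*})\leq T$) gives $|\nabla Ric_{g}|(y,t^{*})\leq\alpha'\bigl(KT/h(t^{*})\bigr)^{3/2}$ and $|Rm_{g}|\leq C'\,KT/h(t^{*})$. The rescaled $|Rm|$-ball $B_{\rho}^{\tilde g}(y,\tilde T)$ is the $g$-ball $B_{\rho R}(y,t^{*})$, and one checks it contains $B_{\rho'\sqrt{K^{-1}T^{-1}h(t^{*})}}(y,t^{*})$ for suitable $\rho'$, using $h(t^{*})\leq T$ when $R=K^{-1/2}$ and $KT\geq\eta$ when $R=m^{-1/2}h^{1/2}(t^{*})$. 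Letting $y$ range over $B_{2\sqrt{T}}(x_{0},t^{*})$ then gives the $|\nabla Ric|$ bound on the full ball claimed in the statement.

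The main obstacle is the simultaneous bookkeeping: one must choose $\tau\geq\eta R^{2}$ so that $\tilde T\geq\eta$ (the time hypothesis of Theorem \ref{MTL}), $\tau\leq t^{*}/4$ to stay in the slow-decay interval and ensure the needed ball inclusion $B_{4\sqrt{\tau}}(y,t)\subset B_{4\sqrt{T}}(x_{0},t)$ for every $y$, and finally identify the $\tilde g$-balls produced by Theorem \ref{MTL} with the $g$-balls announced in Theorem \ref{MTLh}. Once the correct scale $R$ and window length $\tau$ are identified, the rescaling is routine and the proof reduces to a direct invocation of the standard version.
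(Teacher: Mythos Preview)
Your reduction to Theorem~\ref{MTL} by parabolic rescaling on a short backward window is a valid and genuinely different route from the paper's. The paper does not invoke Theorem~\ref{MTL} as a black box; instead it re-runs the blow-up/contradiction argument from scratch, replacing the barrier $(KTt^{-1})^{3/2}$ in the local point-picking lemma by $(KTh^{-1}(t))^{3/2}$ and then checking (in two cases, according to which branch of $\min\{K^{-1/2},h^{1/2}(t)\}$ is active) that the rescaled injectivity radius still diverges so that the Anderson--Shi compactness contradiction goes through. Your approach is cleaner in that it avoids repeating the point-picking and blow-up machinery, at the cost of the rescaling bookkeeping you flag; the paper's approach makes the role of the slow-decay hypothesis on $h$ more transparent inside the blow-up itself.

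There is, however, one concrete gap in your outline: you assert that Theorem~\ref{MTL} applies to $\tilde g$ ``with the same $\delta,\eta$'', but the ball inclusion $B_{4\sqrt{\tau}}(y,t)\subset B_{4\sqrt{T}}(x_0,t)$ does not follow from $\tau\le t^{*}/4$ alone. You need the inclusion for every $t\in[t^{*}-\tau,t^{*}]$, and the distance $d_t(y,x_0)$ can swell by a factor $e^{K\tau}$ from its value $d_{t^{*}}(y,x_0)<2\sqrt{T}$. With $\tau=\eta R^{2}$ one has $K\tau\le\eta$ and $4\sqrt{\tau}\le 4\sqrt{\eta_{\,}/\eta}\,\sqrt{T}$ (using $R\le K^{-1/2}\le\sqrt{T/\eta}$), so the required inequality becomes $4\sqrt{\eta/\eta}+2e^{\eta}\le 4$, which fails for any $\eta>0$. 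The fix is to apply Theorem~\ref{MTL} with a smaller auxiliary parameter $\eta_0=\eta_0(\eta)$ in place of $\eta$: taking $\tau=\eta_0 R^{2}$ one needs $4\sqrt{\eta_0/\eta}+2e^{\eta_0}\le 4$, which holds once $\eta_0$ is chosen small enough depending only on $\eta$. The output constants $\alpha,C,\rho$ then come from Theorem~\ref{MTL} applied with $(\delta,\eta_0)$, and hence still depend only on $\delta,\eta,m,n$ after absorbing the factor $\max\{1,m/\eta\}$ you identified. With this adjustment your argument goes through.
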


\begin{Rmk*}
The reader may take $h(t)=t$ or $ \frac{T}{\pi} \sin \frac{\pi t}{T}$ to obtain some intuitions. 
The proof of this version can be found in Section 3.
We note that the assumptions do not directly involve any information of $inj$ at the initial time since $\lim_{t\to 0^+}h(t)=0$. 
This does not mean that our theorem can be applied to a Ricci flow with singular initial data. 
The initial metric is required to be at least $C^3$.
\end{Rmk*}

The growth or boundedness of curvature operator are important issues in the study of gradient Ricci solitons. 
Several a priori curvature estimates have been derived before (cf. \cite{MunteanuMTWang11, MunteanuWang14,MunteanuWang15,
ChowLuYang11, CaoCui14, Deruelle14,DengZhu15} among others).
Using our Theorem \ref{MTL}, we derive a new boundedness result for all types of gradient Ricci solitons.

\begin{Thm}\label{bounded}
For any class of complete non-compact $n$-dimensional gradient Ricci solitons of either shrinking or steady or expanding type, if the Ricci curvature is bounded and the injectivity radius is bounded away from zero, then the curvature operator is bounded by a uniform constant.
\end{Thm}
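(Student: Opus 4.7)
The plan is to exploit the self-similarity of gradient Ricci solitons to generate a canonical Ricci flow to which Theorem \ref{MTL} directly applies. Given a complete non-compact gradient soliton $(M,g,f)$ with $Ric+\nabla^2 f=\lambda g$, $|Ric|\leq K$, and $inj\geq i_0>0$, the canonical Ricci flow is
$$g(t)=\sigma(t)\,\phi_t^*g,\qquad \sigma(t)=1-2\lambda t,$$
where $\phi_t$ is the one-parameter family of diffeomorphisms generated by $\sigma(t)^{-1}\nabla f$. I would first recall the standard fact that for each of the three types, $|\nabla f|$ grows at most linearly in the distance function, so $\nabla f$ is a complete vector field and the $\phi_t$ exist globally on an appropriate maximal interval: $[0,\infty)$ for steady and expanding, and $[0,1/(2\lambda))$ for shrinking.

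Next, I would track how the hypotheses of Theorem \ref{MTL} transform. Under $g\mapsto c\,g$ one has $|Ric|\mapsto c^{-1}|Ric|$ and $inj\mapsto c^{1/2}\,inj$, while pullback by a diffeomorphism leaves both invariant. Choose $T>0$ in the admissible interval so that $\sigma$ stays in some compact subinterval $[\sigma_0,\sigma_1]\subset(0,\infty)$ on $[0,T]$. Then on $M\times[0,T]$ one obtains the uniform bounds $|Ric|_{g(t)}\leq K':=K/\sigma_0$ and $inj_{g(t)}\geq i_0\sqrt{\sigma_0}$ everywhere. Setting $\delta:=i_0\sqrt{K}$ one verifies $i_0\sqrt{\sigma_0}\geq \delta(K')^{-1/2}$, matching the hypothesis of Theorem \ref{MTL}. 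For the steady and expanding cases I would take $T=\eta/K$ with any fixed $\eta>0$ (with $\sigma_0=1$ and $\sigma_0=1$ respectively). For shrinkers I would take $T=\min\{1/(4\lambda),\,\eta/K\}$ with $\sigma_0=1/2$, choosing $\eta$ small enough (depending possibly on $\lambda$) so that both $T<1/(2\lambda)$ and $TK'\geq\eta$ hold.

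With these choices, Theorem \ref{MTL} applies at every base point $x_0\in M$ (the ball condition is vacuous because the bounds are global on $M$), and evaluating the bound at time $T$ yields
$$|Rm|_{g(T)}(x_0)\leq C\,K'\,T/T=C\,K'.$$
Since $|Rm|_{g(T)}(x)=\sigma(T)^{-1}|Rm|_g(\phi_T(x))$ and $\phi_T$ is a global diffeomorphism of $M$, this translates to $|Rm|_g\leq C\sigma(T)K'$ uniformly on $M$, which is the desired conclusion.

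The main obstacle is the shrinking case: the time interval used to invoke Theorem \ref{MTL} must fit before the singular time $1/(2\lambda)$ while still satisfying $T\geq\eta/K$, which forces the final constant to depend on $\lambda$ in addition to $n$, $K$, and $i_0$. A secondary point, worth stating explicitly to make the canonical flow rigorous, is the completeness of $\nabla f$ in each of the three cases, which follows from the standard asymptotic growth estimates for the potential function.
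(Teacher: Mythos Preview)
Your proposal is correct and follows essentially the same route as the paper: generate the canonical self-similar Ricci flow $g(t)=\sigma(t)\phi_t^*g$, track how the Ricci and injectivity bounds rescale, apply the main Shi-type estimate at a fixed positive time, and translate the resulting curvature bound back to the original metric via the scaling factor. The paper's proof is terser---it simply invokes Theorem~\ref{MTLh} (you use Theorem~\ref{MTL}, which is equivalent here since $inj$ has a uniform lower bound) and notes that $C$ depends on $K,I,t^*,\lambda,n$---while you spell out the case distinction for the admissible time interval and the completeness of $\nabla f$, but these are just elaborations of the same argument rather than a different approach.
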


We further derive a compactness theorem which shows that gradient Ricci solitons have compactness property analogue to Einstein manifolds. Note that the limiting soliton might be a trivial soliton. 

\begin{Cor}\label{GRS}
Let $\lambda\in \{\pm \frac{1}{2}, 0 \}$ and $A\in\mathbb{R}$. 
For any sequence of gradient Ricci solitons $(M_k,g_k,f_k,p_k)_{k\in\mathbb{N}}$ satisfying $Ric_{g_k}+Hess(f_k) = \lambda g_k$, if  
$$|Ric|_{g_k}\leq K,\ \ inj_{g_k}\geq I>0 \ \ \mbox{ and }\ \  |\nabla f|_{g_k}(p_k)\leq A,$$ 
then there exists a subsequence converging smoothly to $(M_\infty,g_\infty,f_\infty,p_\infty)$, which satisfies $Ric_{g_\infty}+Hess(f_\infty)= \lambda g_\infty$ with $f_\infty=\lim_{k\to\infty}f_k$.
\end{Cor}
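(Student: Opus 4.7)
The plan is to combine Theorem \ref{bounded} with a higher-derivative bootstrap and Hamilton's Cheeger--Gromov compactness theorem, and then to promote the pointwise control of $\nabla f_k$ at $p_k$ to global control via the soliton identity.

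First, I would feed the sequence into Theorem \ref{bounded}: since each $(M_k,g_k,f_k)$ is a gradient Ricci soliton with $|Ric|_{g_k}\leq K$ and $inj_{g_k}\geq I$, one obtains a uniform bound $|Rm|_{g_k}\leq C_0$ with $C_0=C_0(n,K,I,\lambda)$. To upgrade this to $|\nabla^l Rm|_{g_k}\leq C_l$ for every $l\geq 1$, I would pass each soliton to its canonical self-similar Ricci flow, which exists on a uniform time interval and inherits the curvature bound, and then invoke Shi's classical estimates to bound all covariant derivatives of curvature at the base time. Together with the uniform lower bound on injectivity radius, Hamilton's compactness theorem then produces a subsequence converging, in the pointed smooth Cheeger--Gromov sense, to a complete limit $(M_\infty,g_\infty,p_\infty)$.

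Next I would control the potential functions. The soliton equation gives $Hess(f_k)=\lambda g_k-Ric_{g_k}$, so $|Hess(f_k)|_{g_k}\leq|\lambda|+K$ uniformly. Differentiating $|\nabla f_k|^2$ along a unit-speed minimizing geodesic $\gamma$ from $p_k$ and applying Cauchy--Schwarz produces $\bigl|\tfrac{d}{ds}|\nabla f_k|_{g_k}(\gamma(s))\bigr|\leq|Hess(f_k)|_{g_k}$; combined with $|\nabla f_k|_{g_k}(p_k)\leq A$ this yields
\[
|\nabla f_k|_{g_k}(x)\leq A+(|\lambda|+K)\,d_{g_k}(x,p_k).
\]
Normalizing $f_k(p_k)=0$ (which leaves the soliton equation untouched) and integrating once more gives a quadratic growth bound for $|f_k|$ on every fixed ball around $p_k$. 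Differentiating the soliton equation $l$ times relates $\nabla^{l+2}f_k$ to $\nabla^l Ric_{g_k}$ modulo lower-order terms, so the curvature-derivative bounds from the previous paragraph propagate to uniform bounds on all $|\nabla^l f_k|_{g_k}$ on each fixed ball.

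Pulling the $f_k$ back by the diffeomorphisms supplied by the Cheeger--Gromov convergence, an Arzel\`a--Ascoli / diagonal argument on an exhaustion by compact sets extracts a subsequence with $f_k\to f_\infty$ smoothly on compact subsets of $M_\infty$, and smooth convergence of both the metrics and the potentials permits passage to the limit in $Ric_{g_k}+Hess(f_k)=\lambda g_k$ to recover $Ric_{g_\infty}+Hess(f_\infty)=\lambda g_\infty$. The main obstacle is the dual bootstrap: the uniform Ricci bound alone provides no curvature control without the nontrivial input of Theorem \ref{bounded}, and the pointwise gradient bound $|\nabla f_k|_{g_k}(p_k)\leq A$ provides no global information without the uniform Hessian bound furnished by the soliton equation itself; once both bootstraps are in place, the compactness mechanism is standard.
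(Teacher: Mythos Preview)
Your proposal is correct and follows essentially the same route as the paper: invoke Theorem \ref{bounded} for a uniform $|Rm|$ bound, pass to the self-similar flow and apply Shi's estimates for higher derivatives, use Hamilton's compactness for the metrics, then control $f_k$ via the uniform Hessian bound from the soliton equation (linear growth of $|\nabla f_k|$, quadratic growth of $f_k$ after normalizing $f_k(p_k)=0$), bootstrap higher derivatives of $f_k$ through derivatives of $Ric$, and finish with Arzel\`a--Ascoli. One inconsequential slip: $|\lambda g_k|=\sqrt{n}\,|\lambda|$, so the Hessian bound should read $\sqrt{n}\,|\lambda|+K$ (the paper itself writes $n|\lambda|+K$); this does not affect the argument.
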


Besides the applications to Ricci solitons, we derive a curvature taming result which can be applied to 
general Ricci flows whose initial curvature might not have a uniform bound. 
We say that the curvature operator is {\it $k$-tamed} by a constant $C$ along a complete non-compact 
Ricci flow if $|Rm|\leq Ct^{-k}$ for all small $t\geq 0$. 
In \cite{HuangTam15}, S. Huang and L.-F. Tam showed that if a Ricci flow $g(t)$ starting from a non-compact K\"ahler manifold is 1-tamed by some constant $C$, then $g(t)$ remains K\"ahler for $t>0$. Moreover, if $C$ is small enough, then the nonnegativity of holomorphic bisectional curvature can also be preserved. Therefore, it is rather important to find criterions for the taming phenomenon.
Note that such estimate is twofold: we would like to have a uniform bound $C$ on a uniform time interval.
For lower dimensional closed manifolds (dimension $n=2$ or $3$), M. Simon \cite[Theorem 2.1]{Simon09} proved that every $(M^n,g)$ satisfying $diam<D$, $Vol>V>0$ and $Ric\geq-Kg$ with sufficiently small $K$ can generate a solution of the Ricci flow which exists up to a maximal time $T=T(D,V)$. Moreover, $|Rm|$ is 1-tamed by a constant depending only on $D$ and $V$. 
In \cite{DaiWeiYe96}, Dai, Wei and Ye showed that every closed $(M^n,g)$ satisfying $|Ric|\leq 1$ and conjugate radius $\geq r_0$  can generate a solution of the Ricci flow which exists up to a maximal time $T=T(n,r_0)$ and $Rm$ is $\frac{1}{2}$-tamed by $C=C(n,r_0)$.
When $|Ric|\leq K$, their theorem holds with $C=C(n,r_0\sqrt{K})$ and $T=T(n,r_0\sqrt{K})$.
For complete non-compact $(M^n,g)$ with $n\geq 3$, G. Xu \cite[Corollary 1.2]{Xu13} showed that, if $Ric\geq-Kg$ and the averaged $L^p$-norm ($p>\frac{n}{2}$) of $Rm$ has a uniform bound $K_1$ for all geodesic balls $B_{r_0}(x)$ with some radius $r_0>0$, then the Ricci flow must exist and $Rm$ is $\frac{n}{2p}$-tamed by a constant $C=C(K,K_1,r_0,n,p)$ up to $t=T(K,K_1,r_0,n,p)$. 

Thanks to the explicit bound in Theorem \ref{MTLh}, we can derive a taming theorem. 

\begin{Thm} \label{taming}
There exists a universal constant $C=C(n)$ such that
for any smooth Ricci flow $(M^n,g(t))$ and any point $x_0\in M$, 
$$|Rm|(x,t)\leq Ct^{-1} \ \mbox{ on } B_{4r}(x_0,t)   \mbox{ for all } t\in (0,K^{-1}],$$ 
and
$$|Rm|(x,t)\leq CK \ \mbox{ on } B_{4r}(x_0,t)   \mbox{ for all } t\in [K^{-1},r^2],$$ 
where $r^2:= \inf \{ \ t>0\ |\  \inf_{B_{8r}(x_0,t)} inj < \sqrt{t} \}$ and $K$ denotes the maximum of $r^{-2}$
and 
$\sup |Ric|$ on $\bigcup_{[0,r^2]}B_{8r}(x_0,t)$.
\end{Thm}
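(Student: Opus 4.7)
The plan is to derive both bounds from the strong Shi-type estimate (Theorem \ref{MTLh}) with the choice $h(t) = t$, which exactly matches the scaling $inj \geq \sqrt{t}$ that comes from the definition of $r^2$. The two regimes in the conclusion correspond to two different choices of the time horizon $T$.

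For the small-time bound $|Rm| \leq C t^{-1}$ on $t \in (0, K^{-1}]$, I would apply Theorem \ref{MTLh} with $T = K^{-1}$ and $h(t) = t$. On $[0,T]$ one has $\sqrt{t} \leq K^{-1/2}$, so $\min\{K^{-1/2}, \sqrt{h(t)}\} = \sqrt{t}$, and the injectivity hypothesis is verified with $\delta = 1$. The ball $B_{4\sqrt{T}}(x_0, t)$ has radius $4K^{-1/2} \leq 4r$ (using $K \geq r^{-2}$), hence lies inside $B_{8r}(x_0, t)$, so the Ricci bound holds there. The theorem then gives $|Rm| \leq CKT/t = C/t$ on the concluding ball.

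For the large-time bound $|Rm| \leq CK$ on $t \in [K^{-1}, r^2]$, at each $t^* \in [K^{-1}, r^2]$ I would apply Theorem \ref{MTLh} with $T = t^*$ and $h(t) = t$. The injectivity condition again holds with $\delta = 1$, the ball $B_{4\sqrt{T}}(x_0, t) \subset B_{4r}(x_0, t) \subset B_{8r}(x_0, t)$ since $\sqrt{T} \leq r$, and the hypothesis $T \geq \eta/K$ is met with $\eta = 1$. Evaluating the $|Rm|$ conclusion at $t = T = t^*$ yields $CKT/T = CK$.

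The step I expect to be hardest is extending these bounds to all of $B_{4r}(x_0, t)$. Theorem \ref{MTLh} controls $|Rm|$ only on the ball $B_{\rho\sqrt{K^{-1}T^{-1}h(t)}}(x_0, t)$, whose radius at $t = T$ is merely $\rho K^{-1/2}$, in general much smaller than $4r$. The natural remedy is to re-apply Theorem \ref{MTLh} centered at each $y \in B_{4r}(x_0, t)$, deducing the hypotheses around $y$ from those around $x_0$ via the triangle inequality applied at every time $\tau \in [0, T]$. Maintaining the ball inclusion $B_{4\sqrt{T}}(y, \tau) \subset B_{8r}(x_0, \tau)$ across the whole interval requires a distance-distortion estimate of Hamilton type, valid on the region where $|Ric| \leq K$. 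For the first range, $T = K^{-1}$ keeps this distortion bounded and the inclusion is immediate; for the second range, where $T$ can be as large as $r^2$ and $Kr^2$ may be arbitrarily large, the direct distortion is too weak, so the argument must be iterated over sub-intervals of length $\sim K^{-1}$, re-seeding the curvature bound at each step using the control already established. This inductive ball-propagation step is the technically most delicate part of the proof.
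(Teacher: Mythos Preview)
Your plan for the first regime $t\in(0,K^{-1}]$ matches the paper exactly: apply Theorem~\ref{MTLh} with $T=K^{-1}$, $h(t)=t$, $\delta=\eta=1$, re-centered at each point $y$ of the target ball.

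For the second regime $t\in[K^{-1},r^2]$ you take a longer road than necessary. Choosing $T=t^*$ forces you to control distance distortion over an interval on which $KT$ may be arbitrarily large, and you then propose to repair this by an inductive ball-propagation over sub-intervals of length $\sim K^{-1}$, ``re-seeding the curvature bound'' at each step. The paper short-circuits all of this: it applies Theorem~\ref{MTLh} on a \emph{single} window of length $K^{-1}$ adjacent to $t^*$, again with $h(s)=s$ in the shifted time variable, and reads off $|Rm|\leq CKT\,h^{-1}=CK$ at the terminal time of that window. This is legitimate because Theorem~\ref{MTLh} requires no curvature input whatsoever---only the Ricci bound and the injectivity-radius bound, both of which are given as hypotheses on all of $[0,r^2]$; hence there is nothing to ``re-seed'' and no induction is needed. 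Over a window of length $K^{-1}$ the Ricci-controlled distance distortion is at most a bounded factor, so the re-centering at $y$ goes through with only cosmetic constant adjustments. In effect, your anticipated ``technically most delicate part'' collapses to its last step.
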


Before Section 5, where more applications of our main theorems are demonstrated, 
we discuss the approach towards our aim via Moser's iteration in Section 4.
In particular, we derive the following theorem 
by using Kotschwar-Munteanu-Wang's $L^p$-estimate \cite[Proposition 1]{KotschwarMunteanuWang15}. 
To abbreviate the notation, we denote the parabolic region emanated from $B_r(x_0,0)$ up to $t=T$ by $\mathcal{P}(r;x_0,T)$,
namely, $\mathcal{P}(r;x_0,T):= \Omega \times(0,T], \mbox{ where }  \Omega\subset M \mbox{ is the topological region defined by } B_r(x_0,0)$.
Furthermore, $\overline{\mathcal{P}(r;x_0,T)}$ is defined to be $\Omega \times[0,T]$.

\begin{Thm}\label{KMW}
Let $(M^n,g(t))_{t\in[0,T]}$ be a smooth solution of the Ricci flow. 
If $$|Ric|\leq K  \mbox{ in } \overline{\mathcal{P}(4r;x_0,T)} \ \mbox{ and }\ 
\inf_{B_{4r} \left( x_0,0 \right)} inj\geq I>0$$
for some $K,I,r > 0$,
then there exists $C$ depending on $K,I,r,T$ and the dimension $n$ such that 
$|Rm|(x_0,t)\leq C$ in $\mathcal{P}(r;x_0,T)\setminus \mathcal{P}(r;x_0,\frac{1}{2}T)$.
\end{Thm}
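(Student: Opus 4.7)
The plan is a three-step bootstrap: obtain an initial $L^p$ bound on $|Rm|$ at $t=0$ from the geometric hypotheses, propagate it in time along the Ricci flow using Kotschwar-Munteanu-Wang's Proposition~1, and then upgrade the resulting space-time $L^p$ bound to an $L^\infty$ bound on the smaller parabolic region $\mathcal{P}(r;x_0,T)\setminus\mathcal{P}(r;x_0,\tfrac{1}{2}T)$ via Moser iteration on the evolution inequality of $|Rm|$.

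For the initial step, I would invoke Anderson's regularity theorem: on $B_{4r}(x_0,0)$ the bounds $|Ric|\leq K$ and $inj\geq I$ together yield harmonic coordinate charts of a definite size in which $g(0)\in W^{2,p}$ with norm controlled by $K$, $I$, $n$ and $p$. This converts to a local $L^p$ estimate $\|Rm(\cdot,0)\|_{L^p(B_{7r/2})}\leq C_0(K,I,r,n,p)$ for every $p<\infty$. With this $L^p$ datum in hand, the KMW $L^p$-estimate (their Proposition~1) applied to the Ricci flow with $|Ric|\leq K$ in $\overline{\mathcal{P}(4r;x_0,T)}$ gives
\[
\|Rm(\cdot,t)\|_{L^p(B_{3r})}\leq e^{C(n,p)KT}\|Rm(\cdot,0)\|_{L^p(B_{7r/2})}
\]
(modulo harmless cutoff terms), so $|Rm|$ is uniformly bounded in $L^p(B_{3r})$ for every $t\in[0,T]$, with a constant depending only on $K,I,r,T,n,p$.

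The $L^p$-to-$L^\infty$ promotion is standard Moser iteration applied to $u=|Rm|^2$, which satisfies $\partial_t u\leq \Delta u-2|\nabla Rm|^2+C(n)|Rm|\cdot u$. Because $|Ric|\leq K$ forces $e^{-2Kt}g_0\leq g(t)\leq e^{2Kt}g_0$, the volume form and distance function on $B_{3r}(x_0,t)$ are uniformly equivalent to those at $t=0$; combined with the initial injectivity radius lower bound, this yields a uniform local Sobolev inequality on $B_{3r}(x_0,t)$ with constant depending on $K,I,r,n$. Choosing space-time cutoff functions that shrink the ball from $3r$ down to $r$ and the time interval from $[0,T]$ to $[\tfrac{1}{2}T,T]$, the iteration $p\to p\kappa\to p\kappa^2\to\cdots$ (with $\kappa>1$ coming from Sobolev) produces $\|Rm\|_{L^\infty(B_r\times[\tfrac{T}{2},T])}\leq C$, where $C$ depends only on $K,I,r,T,n$, as required.

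The principal obstacle is handling the cubic term $C(n)|Rm|\cdot u$ in the Moser iteration, since it is not absorbed by the bounded Ricci hypothesis alone. The trick is to choose the starting exponent $p_0$ large enough that the $L^{p_0}$ bound from the previous step dominates the excess factor of $|Rm|$ at each Moser stage; more precisely one treats $|Rm|$ in the reaction term as an $L^{p_0}$ coefficient and uses H\"older's inequality before applying Sobolev, at the cost of a slightly worse $\kappa$. This is precisely the same device that makes the KMW $L^p$-estimate self-contained, and it is the reason the final constant in the theorem depends on $K,I,r,T,n$ but not on any a priori bound of $|Rm|$.
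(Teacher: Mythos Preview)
Your proposal is correct and follows essentially the same three-step strategy as the paper: Anderson's harmonic-coordinate regularity for the initial $L^p$ bound on $|Rm|$, propagation in time via Kotschwar--Munteanu--Wang's Proposition~1, and Moser iteration to pass from $L^p$ to $L^\infty$. The only cosmetic difference is that the paper controls the volume quantities entering Moser's iteration via Berger--Croke's lower volume bound together with Bishop--Gromov and the evolution $\partial_t\,dVol=-R\,dVol$, whereas you obtain the needed Sobolev constant directly from the metric equivalence $e^{-2Kt}g_0\le g(t)\le e^{2Kt}g_0$ combined with the time-zero injectivity radius bound; both routes yield the same dependence of constants.
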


The second part of this paper consists of Section 6 and 7. There we discuss how to control the derivative of Ricci curvature when lacking information of injectivity radius. Suppose that Ricci curvature is bounded by $K$ and its derivative $|\nabla Ric|$ is controlled by the derivative of scalar curvature $|\nabla R|$, we prove that both $|\nabla Ric|$ and $|\nabla R|$ are bounded for all $t\in(0,\frac{1}{K}]$.
Precisely, we derive 

\begin{Thm}[Global estimate]\label{ShiG}
There exists a constant $C>0$, depending only on $\alpha, \beta$ and $n$ such that
for every $n$-dimensional closed solution $(M^n,g(t))_{t\in[0,T)}$ of the Ricci flow,
if the Ricci curvature and its derivative satisfy that
$|Ric|\leq K$ and $|\nabla Ric|\leq \alpha K t^{\frac{-1}{2}}+\beta|\nabla R|$
for all $t\in[0,\frac{1}{K}]\subset [0,T)$, where $K$ is a positive constant,
then $$|\nabla Ric|^2\leq C K^2t^{-1}$$ for all $t\in(0,\frac{1}{K}]$.
\end{Thm}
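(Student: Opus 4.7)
The plan is to first reduce bounding $|\nabla Ric|$ to bounding $|\nabla R|$, then exploit the fact that the scalar curvature $R$ satisfies a heat-type equation whose reaction term $2|Ric|^2$ involves no quantity that requires a bound on the full Riemann tensor. Indeed, the hypothesis immediately gives
$$|\nabla Ric|^2 \leq 2\alpha^2 K^2 t^{-1} + 2\beta^2|\nabla R|^2,$$
so it suffices to produce an estimate of the form $|\nabla R|^2 \leq C'K^2 t^{-1}$ on $(0, 1/K]$. This is the right target precisely because $\partial_t R = \Delta R + 2|Ric|^2$ lets one derive evolution equations for $R^2$ and for $|\nabla R|^2$ containing no $|Rm|$-dependent reaction terms.

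Next, I would compute
\begin{align*}
\partial_t R^2 &= \Delta R^2 - 2|\nabla R|^2 + 4R|Ric|^2,\\
\partial_t |\nabla R|^2 &= \Delta |\nabla R|^2 - 2|\nabla^2 R|^2 + 4\nabla^i R\,\nabla_i |Ric|^2,
\end{align*}
and invoke the hypothesis to handle the cross term: from $\nabla_i|Ric|^2 = 2R_{jk}\nabla_i R^{jk}$ together with $|Ric|\leq K$,
$$4\nabla^i R\,\nabla_i |Ric|^2 \leq 8K|\nabla R||\nabla Ric| \leq 8\alpha K^2 t^{-1/2}|\nabla R| + 8\beta K|\nabla R|^2.$$
I would then introduce the auxiliary function $F = t|\nabla R|^2 + AR^2$ and select a constant $A=A(\beta)$ large enough so that on $[0,1/K]$, where $8\beta Kt \leq 8\beta$, the combined coefficient $(1-2A+8\beta Kt)$ of $|\nabla R|^2$ in $\partial_t F - \Delta F$ is sufficiently negative to absorb the linear term $8\alpha K^2 t^{1/2}|\nabla R|$ via an AM-GM inequality, while $|Ric|\leq K$ bounds the source $4AR|Ric|^2 \leq 4An^{3/2}K^3$. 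The outcome is $\partial_t F - \Delta F \leq C(\alpha,\beta,n)\,K^3$ on $M\times[0,1/K]$.

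Since $M$ is closed, the parabolic maximum principle yields $\max_M F(\cdot,t) \leq \max_M F(\cdot,0) + CK^3 t \leq AnK^2 + CK^2 = C'K^2$ on $[0,1/K]$. Dropping the $AR^2$ term gives $|\nabla R|^2 \leq C'K^2 t^{-1}$, and reinserting this into the hypothesis completes the proof. The main obstacle I anticipate is structural rather than computational: a naive attempt with the tensor analogue $F = t|\nabla Ric|^2 + A|Ric|^2$ would generate reaction terms of the type $R_{ikjl}R^{ik}R^{jl}$ and $R_{ikjl}\nabla^i R^{kj}\cdot\nabla^l R$ that cannot be controlled by $|Ric|$ alone. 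One is therefore forced to \emph{scalarize} the auxiliary quantity and work with $R$ and $|\nabla R|^2$; the hypothesis $|\nabla Ric| \leq \alpha K t^{-1/2} + \beta|\nabla R|$ is then the exact bridge that promotes the scalar estimate back to the tensor estimate.
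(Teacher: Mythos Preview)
Your proposal is correct and follows essentially the same route as the paper: reduce to bounding $|\nabla R|$, form $F=t|\nabla R|^2+AR^2$ with $A=A(\beta)$, derive $\partial_t F\le\Delta F+CK^3$, and apply the maximum principle on the closed manifold. Your closing remark explaining why one must scalarize (to avoid uncontrollable $Rm$-terms in the evolution of $|\nabla Ric|^2$) is a useful piece of motivation that the paper does not spell out.
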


\begin{Thm}[Local estimate]\label{ShiL}
There exist positive constants $\theta_0$ and $C$ depending only on $\alpha, \beta, n$ and $\Lambda$ such that
for every solution $(M^n,g(t))_{t\in[0,\theta_0/K]}$ of the Ricci flow,
if $|Rm|\leq \Lambda$ on $B_r(x_0,0)$, 
$|Ric|\leq K$ and $|\nabla Ric|\leq \alpha K \left(\frac{1}{r^2}+\frac{1}{t}+K\right)^{\frac{1}{2}}+\beta|\nabla R|$
on $\overline{\mathcal{P}(r;x_0,t_0)}$ for some $r\leq\sqrt{\theta_0/K}$ and $t_0\leq\theta_0/K$, then
$$|\nabla Ric|^2\leq CK^2\left(\frac{1}{r^2}+\frac{1}{t}+K\right)$$ on $\mathcal{P}(\frac{r}{\sqrt{2}};x_0,t_0) $.
\end{Thm}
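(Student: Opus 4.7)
The plan is to localize the argument behind the global estimate (Theorem \ref{ShiG}) using a Shi-type parabolic cutoff, while using the bootstrap hypothesis $|\nabla Ric|\leq \alpha K(1/r^2+1/t+K)^{1/2}+\beta|\nabla R|$ to dispose of the uncontrolled curvature-operator terms in the evolution equations.

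\textbf{Setup.} First I would write the standard Ricci-flow evolution equations
$$\bigl(\tfrac{\partial}{\partial t}-\Delta\bigr)|Ric|^2 = -2|\nabla Ric|^2 + Rm\ast Ric\ast Ric,$$
$$\bigl(\tfrac{\partial}{\partial t}-\Delta\bigr)|\nabla Ric|^2 = -2|\nabla^2 Ric|^2 + Rm\ast\nabla Ric\ast\nabla Ric,$$
$$\bigl(\tfrac{\partial}{\partial t}-\Delta\bigr)|\nabla R|^2 = -2|\nabla^2 R|^2 + 4\,\nabla R\ast\nabla Ric\ast Ric.$$
The only obstruction to running Shi's argument with $|Ric|$ in place of $|Rm|$ is the term $Rm\ast\nabla Ric\ast\nabla Ric$. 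Substituting the bootstrap bound into one factor of $|\nabla Ric|$ produces, from its first summand, a quadratic expression in $|\nabla Ric|$ with coefficients depending only on $K,r,t$; from its second summand, a cross-term $Rm\ast\nabla Ric\ast\nabla R$ that is split by Cauchy--Schwarz into pieces absorbed by the Bochner terms $-|\nabla^2 Ric|^2$, $-|\nabla^2 R|^2$, and a residual piece controlled by a linear combination of $|\nabla Ric|^2$ and $|\nabla R|^2$. This is precisely the mechanism behind Theorem \ref{ShiG}, so I would follow that pointwise computation and only worry about the cutoff.

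\textbf{Cutoff and test function.} I would then introduce $\eta=\phi(d_t(x,x_0)/r)$ with $\phi$ a standard Shi cutoff satisfying $|\phi'|^2\leq C\phi$ and $\phi''\geq -C$, supported in $[0,1]$ and equal to one on $[0,1/\sqrt{2}]$. Since $|Ric|\leq K$, the standard Ricci-bounded distance-distortion inequality gives $\bigl(\tfrac{\partial}{\partial t}-\Delta\bigr)d_t(x,x_0)\geq -C\sqrt{K}$ in the barrier sense on the relevant region, so the cutoff contributes to $\bigl(\tfrac{\partial}{\partial t}-\Delta\bigr)\eta$ only errors of order $K/r^2+1/r^2$, which are consistent with the target bound. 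The candidate test function is
$$F=\eta^2\Bigl[\,|\nabla Ric|^2 + A\,|\nabla R|^2 + B K^2\bigl(1/r^2+1/t+K\bigr)|Ric|^2\,\Bigr],$$
with constants $A,B$ large enough (depending on $\alpha,\beta,n$) that the good term $-2|\nabla Ric|^2$ coming from the evolution of $B K^2(\cdots)|Ric|^2$ absorbs the residual coefficient of $|\nabla Ric|^2$ left by the bootstrap and the $A|\nabla R|^2$ slot absorbs the remaining cross-term. The initial hypothesis $|Rm|\leq\Lambda$ on $B_r(x_0,0)$ together with the $C^3$-regularity of $g_0$ provides the bound on $|\nabla Ric|(\cdot,0)$ needed to initialize the maximum principle (note that the right-hand side $1/t+1/r^2+K$ blows up as $t\to 0$, so this initial step is essentially automatic on the open region). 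A parabolic maximum principle applied to $F$ over $\overline{\mathcal{P}(r;x_0,t_0)}$, followed by division by $\eta^2$, delivers the conclusion on $\mathcal{P}(r/\sqrt{2};x_0,t_0)$.

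\textbf{Main obstacle.} The delicate point is closing the bootstrap together with the localization. Because the hypothesis permits $|\nabla Ric|$ to be as large as $\beta|\nabla R|$ with $|\nabla R|$ a priori uncontrolled, one must track $|\nabla R|$ and $|\nabla Ric|$ simultaneously, and calibrate $A$, $B$ so that (i) the cross-terms produced by the bootstrap substitution cancel pointwise, (ii) the distance-distortion errors from $\eta$ remain subleading to $t^{-1}+r^{-2}+K$, and (iii) the short-time threshold $\theta_0$ comes out uniform in the geometric data (depending only on $\alpha,\beta,n,\Lambda$). This accounting is what forces the restrictions $r\leq\sqrt{\theta_0/K}$, $t_0\leq \theta_0/K$ and the shrinking $r\mapsto r/\sqrt{2}$; larger radii or later times would allow the cutoff errors to dominate, spoiling the Shi-type decay.
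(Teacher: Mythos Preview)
There is a genuine gap in your approach. The evolution equation you write for $|\nabla Ric|^2$ has a reaction term of the schematic form $Rm\ast\nabla Ric\ast\nabla Ric$ (and, once you commute $\nabla$ with the heat operator, also $\nabla Rm\ast Ric\ast\nabla Ric$). The factor $Rm$ here is the full curvature operator, not the Ricci tensor, and the hypotheses give you no control on $|Rm|$ for $t>0$ --- only $|Ric|\leq K$. Your proposed substitution of the bootstrap bound $|\nabla Ric|\leq \alpha K u^{1/2}+\beta|\nabla R|$ into one factor of $\nabla Ric$ does nothing to remove this $Rm$: you would be left with $|Rm|\cdot|\nabla Ric|\cdot(\alpha K u^{1/2})$ and $|Rm|\cdot|\nabla Ric|\cdot\beta|\nabla R|$, both carrying an uncontrolled $|Rm|$ coefficient. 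These cannot be absorbed by the Bochner terms $-|\nabla^2 Ric|^2$, $-|\nabla^2 R|^2$, so the differential inequality for your test function $F$ never closes. Your remark that ``this is precisely the mechanism behind Theorem \ref{ShiG}'' is a misreading of that proof: the global estimate is run entirely on the \emph{scalar} curvature, via $F=t|\nabla R|^2+AR^2$.

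The paper's proof avoids the obstruction by never touching $|\nabla Ric|^2$ directly. Since $\partial_t R=\Delta R+2|Ric|^2$ is a scalar equation, the evolution of $|\nabla R|^2$ involves only $|Ric|$ and $|\nabla Ric|$ (no $Rm$): $\partial_t|\nabla R|^2\leq\Delta|\nabla R|^2-2|\nabla^2R|^2+4|Ric||\nabla R|^2+8|Ric||\nabla Ric||\nabla R|$. The weak Bianchi inequality is then used to replace the single $|\nabla Ric|$ by $\alpha K u^{1/2}+\beta|\nabla R|$, yielding a closed inequality in $|\nabla R|$. The quantity $S=(BK^2+R^2)|\nabla R|^2$ satisfies $\partial_t S\leq\Delta S-cS^2+Cu^2$ for a suitable normalization, and a barrier $H=cA^2/\varphi^2+d/t+K$ built from a \emph{time-independent} cutoff $\varphi$ (fixed at $t=0$) finishes via the maximum principle. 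The time-independence of $\varphi$ creates its own circularity --- controlling $|\nabla^2\varphi|_{g(t)}$ for $t>0$ requires a bound on $|\nabla Ric|$ --- which the paper resolves by a Hamilton-type continuity argument (Lemmas A and B), and this is where the dependence of $\theta_0$ on $\Lambda$ enters. Only at the very end is the bound on $|\nabla R|$ converted back to one on $|\nabla Ric|$ via the weak Bianchi inequality.
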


We doubt that $|\nabla Ric|$ can be controlled by $|\nabla R|$ for generic solutions of the Ricci flow. However, we believe that it is true for a large variety of solutions including Ricci solitons. 
A related result appeared earlier in a collaborated work of A. Deruelle and the author \cite[Theorem 2.10]{ChenDeruelle15}.
The reader could find more discussions in the last section.  

\ \\
{\bf Acknowledgement.} The main part of this article was done when I visited Ovidiu Munteanu in University of Connecticut in September 2015. I appreciate the hospitality of the university and precious discussions with Ovidiu. Some part of the work was done when I was a doctoral student at l'Institut Fourier. I would like to thank my advisor G\'erard Besson for his encouragement and all kinds of helps. I am grateful to NCTS and TIMS in Taiwan for the constant generous supports.

\section{Global estimate of $Rm$}

Given an $n$-dimensional Riemannian manifold $(M,g)$ and a point $p\in M$, we can find a local chart $(\mathcal{U}, \varphi), \varphi :\mathcal{U}\to \mathbb{R}^n$, such that $p\in \mathcal{U}\subset M$ and $\varphi=(\varphi_1,\dots,\varphi_n)$ consists of harmonic functions, i.e., $\Delta_g \varphi_k =0$ for all $k=1,\dots,n$. Under these coordinates, the Laplacian of a function $f$ which is defined by  
$\Delta_g f= \frac{1}{\sqrt{\det g}}  \partial_i (\sqrt{\det g}\cdot g^{ij} \cdot\partial_j f) $ can be reduced to
$$\Delta_g f=  g^{ij} \partial_i\partial_j f.$$
Moreover, if on a geodesic ball $B_r(p)\subset\mathcal{U}$ one has $|Ric|\leq K$ and $inj\geq I$, then
M. Anderson \cite[Lemma 2.2]{Anderson90} showed that for any $\sigma\in(0,1)$, there exist $\epsilon=\epsilon(K,n,\sigma)$ and $C_0=C_0(\epsilon,I)$ such that harmonic coordinates $\varphi_k$'s exist on $B_{\epsilon I}$
with $$g_{ij}(p)=\delta_{ij}\ \ \mbox{ and }\ \ | g_{ij} |^{'}_{C^{1,\sigma}}\leq C_0 \mbox{ on }B_{\epsilon I},$$
where
$$|g_{ij}|^{'}_{C^{1,\sigma}}:= \sup_{B_r} |g_{ij}| +\sup_{B_r; k=1,\cdots,n}  r|\partial_k g_{ij}| +\sup_{x\neq y; k=1,\cdots,n} \left(r^{1+\sigma}\frac{|\partial_k g_{ij}(x)-\partial_k g_{ij}(y)|}{|x-y|^\sigma}\right)$$ 
and $r:=\epsilon I$. (The notation $|\cdot|^{'}$ is adopted from \cite[Page 53]{GilbargTrudinger}.)
On the other hand, one can compute the Laplacian of $g_{ij}$ under harmonic coordinates and derive 
$$\Delta_g g_{ij} = -2 R_{ij} + P(g_{ij},\partial g_{ij}),$$
where $P$ is a certain quasi-polynomial of $g_{ij}$ and $\partial g_{ij}$ (cf. \cite[Chapter 10]{Petersen}). 
Hence the standard elliptic regularity theory  (cf. \cite[Theorem 4.6]{GilbargTrudinger}) tells that 
\begin{equation}
|g_{ij}|^{'}_{C^{2,\sigma}} \leq C \left(  |R_{ij}|^{'}_{C^{0,\sigma}}+|g_{ij}|^{'}_{C^{1,\sigma}} \right).
\end{equation}
Note that when the Ricci curvature and its derivative are bounded in the sense that $|Ric|^2=g^{ik}g^{jl}R_{ij}R_{kl}\leq K^2$ and $|\nabla Ric|^2= g^{pq}g^{ik}g^{jl}\nabla_pR_{ij}\nabla_qR_{kl}\leq L$,
then the norm of coefficients $R_{ij}$ and its derivatives shall satisfy $|R_{ij}|^{'}_{C^{0,\sigma}}\leq C=C(K,\epsilon I,L)$. Therefore, one can use harmonic coordinates and (1) to derive a bound for $|g|^{'}_{C^{2,\sigma}}$.
In particular, the coefficients of curvature tensor are bounded. Since the tensor norm does not depend on coordinate choosing, the curvature $R_{ij\ l}^{\ \ k}$ is bound in the tensor sense. It is also equivalent to say that curvature operator $Rm$ is bounded.
Such strategy will be used several times in this article. 
One should be cautious that, for the elliptic regularity on Riemannian manifolds,
the constant $C$ depends not only on $n$, $\sigma$, but also on the upper bound of $|g^{ij}|$. 
(One can see this when adapting proofs of theorems in Chapter 2, 3 and 4 of \cite{GilbargTrudinger} into the Riemannian case.) 
Hence $C=C(n,\sigma,\epsilon,I)$.

For the reader's convenience, we prove the following compact version of Theorem \ref{MTL} first. 
The proof of the standard version of Theorem \ref{MTL} is more subtle and will be demonstrated in the next section.

\begin{Thm*MTL}[Compact version] 
For any $\delta,\eta>0$ and $n\in\mathbb{N}$, there exist positive constants $\alpha$ and $C$ such that 
for any $K>0$ and any closed smooth Ricci flow  $(M^n,g(t))_{t\in[0,T]}$ with $T\geq\frac{\eta}{K}>0$,
if 
$$|Ric|\leq K \ \ \mbox{ and }\ \ inj \geq \delta K^{-\frac{1}{2}}\ \ \mbox{ for all }t\in(0,T],$$
then 
$$|\nabla Ric| \leq \alpha  \left( KTt^{-1} \right)^{\frac{3}{2}}\ \mbox{ and }\ \ |Rm|\leq CKTt^{-1}\ \mbox{ for all } t\in(0,T].$$
\end{Thm*MTL}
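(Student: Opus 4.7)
The plan is to prove the $|\nabla Ric|$ estimate by a blow-up/compactness contradiction argument, then derive the $|Rm|$ bound by a single-time rescaling combined with the Anderson harmonic-coordinate machinery recalled at the start of Section 2. First, parabolic rescaling $\tilde g(\tau):=Kg(\tau/K)$ shows the target estimate is invariant under scaling of $K$, so I would reduce to $K=1$; the hypotheses then read $|Ric|\le 1$ and $inj\ge\delta$ on $[0,T]$ with $T\ge\eta$, and the goal becomes $|\nabla Ric|\le\alpha(T/t)^{3/2}$.

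Suppose this fails. Then there exist closed Ricci flows $(M_k,g_k(t))_{t\in[0,T_k]}$ satisfying the hypotheses but with $\alpha_k:=\sup_{M_k\times(0,T_k]}|\nabla Ric|(x,t)(t/T_k)^{3/2}\to\infty$. Since each $M_k$ is compact and the function $F_k(x,t):=|\nabla Ric|(x,t)(t/T_k)^{3/2}$ is continuous and vanishes as $t\to 0^+$, the supremum is attained at some interior point $(x_k,t_k)$. Put $Q_k:=|\nabla Ric|_{g_k}(x_k,t_k)^{2/3}$; the identity $|\nabla Ric|(x_k,t_k)=\alpha_k(T_k/t_k)^{3/2}$ together with $T_k\ge\eta$ gives $Q_k\to\infty$ and $Q_kt_k\ge\alpha_k^{2/3}T_k\to\infty$. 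Parabolically rescale by $\tilde g_k(s):=Q_kg_k(t_k+s/Q_k)$ on $s\in[-Q_kt_k/2,0]$. The maximality of $(x_k,t_k)$ yields $|\nabla Ric|_{\tilde g_k}(x,s)\le 2^{3/2}$ on this spacetime region, with value $1$ at $(x_k,0)$; moreover $|Ric|_{\tilde g_k}\le Q_k^{-1}\to 0$ and $inj_{\tilde g_k}\ge\delta Q_k^{1/2}\to\infty$.

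Applying at each time slice the Anderson harmonic-coordinate construction recalled in Section 2---which converts bounds on $|Ric|$, $|\nabla Ric|$, and $inj$ into a bound on $|Rm|$ via the equation $\Delta_g g_{ij}=-2R_{ij}+P(g,\partial g)$ in harmonic charts together with standard elliptic regularity---yields a uniform $|Rm|_{\tilde g_k}$ bound throughout $M_k\times[-Q_kt_k/2,0]$. Hamilton's compactness theorem then extracts a smooth subsequential Cheeger-Gromov limit $(M_\infty,g_\infty(s),x_\infty)$ on $(-\infty,0]$. On this limit $Ric_{g_\infty}\equiv 0$, so $g_\infty$ is stationary under the Ricci flow and $\nabla Ric_{g_\infty}\equiv 0$; but smooth convergence forces $|\nabla Ric_{g_\infty}|(x_\infty,0)=1$, the desired contradiction.

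Once $|\nabla Ric|\le\alpha(KT/t)^{3/2}$ is established, the $|Rm|$ bound follows by a single-time-slice rescaling: at any $t\in(0,T]$, set $\lambda:=KT/t$ and $\bar g:=\lambda g$. Then $|Ric|_{\bar g}\le t/T\le 1$, $|\nabla Ric|_{\bar g}\le\alpha$, and $inj_{\bar g}\ge\delta(T/t)^{1/2}\ge\delta$, so the same Anderson/elliptic-regularity machinery bounds $|Rm|_{\bar g}$ by a constant $C=C(n,\delta,\alpha)$; unrescaling gives $|Rm|\le CKT/t$. The main technical obstacle I anticipate is ensuring the $|Rm|_{\tilde g_k}$ bound on the rescaled sequence is uniform in spacetime---not just at the blow-up instant---so that Hamilton's compactness applies. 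This is precisely the reason for maximizing $|\nabla Ric|(x,t)(t/T_k)^{3/2}$ rather than $|\nabla Ric|$ alone: the weight $(t/T_k)^{3/2}$ supplies the parabolic scale shift that keeps $|\nabla Ric|_{\tilde g_k}$ bounded backwards in rescaled time, thereby keeping the input to Anderson's estimate uniform over the entire parabolic region.
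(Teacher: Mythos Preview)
Your proposal is correct and follows the same overall architecture as the paper: a blow-up/compactness contradiction for the $|\nabla Ric|$ estimate, then a single-time rescaling plus the Anderson harmonic-coordinate machinery and elliptic regularity for the $|Rm|$ bound. The second step matches the paper exactly.

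The one genuine difference is in how the blow-up point is selected. You take the maximum of the weighted quantity $|\nabla Ric|(x,t)(t/T_k)^{3/2}$, which exists by compactness and continuity, vanishes at $t=0^+$, and immediately yields $|\nabla Ric|_{\tilde g_k}\le 2^{3/2}$ on the backward half-interval $[-Q_kt_k/2,0]$. The paper instead proves a separate Perelman-style point-picking lemma: starting from any point where the estimate fails, it iteratively replaces the current point by one with at least eight times larger $|\nabla Ric|$, terminating (since $|\nabla Ric|$ is bounded on the closed spacetime) at a point $\bar p$ with $|\nabla Ric|\le 8\bar Q$ on $M\times[\bar t-\beta\bar Q^{-2/3},\bar t]$. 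Your weighted-maximum argument is cleaner and more direct in the closed setting; the paper's point-picking is more cumbersome here but is designed to generalize to the local version (Theorem~\ref{MTL}, Lemma~3), where a global weighted supremum need not be attained in the interior of the relevant ball. Apart from this variation in the point-selection mechanism, the two proofs are the same.
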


\begin{Lma} \label{MLG}
For any $\delta,\eta>0$ and $n\in\mathbb{N}$, there exists $\alpha>0$ such that 
for any smooth Ricci flow $g(t)_{t\in[0,T]}$ on a closed manifold $M^n$, if $|Ric|\leq K$, $inj\geq \frac{\delta}{\sqrt{K}}$ and $T\geq \frac{\eta}{K}$, then 
$$|\nabla Ric| \leq \alpha \left(KTt^{-1}\right)^{\frac{3}{2}}$$
for all $t\in\left(0,T\right]$.
\end{Lma}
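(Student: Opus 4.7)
I would prove this by a blow-up contradiction. Assume the lemma fails: for some $\delta,\eta,n$ and each integer $k$ there is a counterexample with $\alpha=k$, yielding closed Ricci flows $(M_k^n,g_k(t))_{t\in[0,T_k]}$ satisfying the three hypotheses with some $K_k>0$, together with points $(x_k,t_k)$ for which $|\nabla Ric|(x_k,t_k)\geq A_k(K_kT_k/t_k)^{3/2}$ and $A_k\to\infty$. The scale-invariant quantity $\Phi_k(x,t):=|\nabla Ric|(x,t)^2\,t^3$ is continuous on $M_k\times[0,T_k]$ and vanishes at $t=0$, so it attains its maximum; I replace $(x_k,t_k)$ by a maximizer of $\Phi_k$. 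This yields the Hamilton-type point-picking inequality
$$|\nabla Ric|(y,s)\leq |\nabla Ric|(x_k,t_k)\,(t_k/s)^{3/2},\qquad (y,s)\in M_k\times(0,T_k].$$

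Set $\lambda_k:=|\nabla Ric|(x_k,t_k)^{1/3}$ and consider the parabolic rescaling $\tilde g_k(s):=\lambda_k^{2}\,g_k(t_k+s/\lambda_k^{2})$. Using $\lambda_k^{2}\geq A_k^{2/3}K_kT_k/t_k$ together with the standard scaling behaviour of $Ric$, its covariant derivative, and the injectivity radius, a direct calculation gives $|\widetilde{Ric}|\leq A_k^{-2/3}\to 0$, $\widetilde{inj}\geq \delta A_k^{1/3}\to\infty$, $|\widetilde{\nabla Ric}|(x_k,0)=1$, and by the point-picking $|\widetilde{\nabla Ric}|\leq 2\sqrt 2$ on the backward window $s\in[-\tfrac12\lambda_k^{2}t_k,0]$, whose length exceeds $\tfrac12 A_k^{2/3}\eta\to\infty$.

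The rescaled bounds $|\widetilde{Ric}|,|\widetilde{\nabla Ric}|$ small/bounded and $\widetilde{inj}\to\infty$ are exactly the input of Anderson's harmonic-coordinate lemma recalled in Section~2; together with the elliptic bootstrap of $\Delta_g g_{ij}=-2R_{ij}+P(g,\partial g)$ discussed there, they supply a uniform estimate $|\widetilde{Rm}|\leq C'$ on the backward window with $C'$ independent of $k$. Hamilton's compactness theorem for Ricci flows (bounded curvature and injectivity radius bounded below) then delivers a subsequential smooth pointed limit $(\mathbb{R}^n,\tilde g_\infty(s),p_\infty)$ defined on $s\in(-\infty,0]$, which is itself a Ricci flow. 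Since $|\widetilde{Ric}_k|\to 0$ uniformly, the limit satisfies $Ric_{\tilde g_\infty}\equiv 0$ and hence $\nabla Ric_{\tilde g_\infty}\equiv 0$; but smooth convergence forces $|\nabla Ric_{\tilde g_\infty}|(p_\infty,0)=\lim_k|\widetilde{\nabla Ric}|(x_k,0)=1$, a contradiction.

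The main obstacle is securing a uniform $|Rm|$ bound on the rescaled flows so that Hamilton's compactness applies: this is precisely the conversion of a Ricci bound (plus the derivative bound supplied by the point-picking and a lower injectivity radius) into a full curvature bound via Anderson's harmonic coordinates. One must verify that the constants in Anderson's lemma depend only on $\delta,\eta,n$ uniformly in $k$, and that the elliptic bootstrap of the harmonic-coordinate equation can be iterated far enough to recover the $C^3$ regularity needed for $\nabla Ric$ to pass to the limit.
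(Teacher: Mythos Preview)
Your argument is correct and follows the same overall blow-up strategy as the paper: contradiction sequence, rescale so that $|\nabla Ric|=1$ at the marked point, use Anderson's harmonic-coordinate lemma together with the elliptic estimate for $\Delta_g g_{ij}=-2R_{ij}+P(g,\partial g)$ to turn the bounds on $|Ric|$, $|\nabla Ric|$ and $inj$ into a uniform $|Rm|$ bound, then invoke Shi's estimates and Hamilton compactness to extract a Ricci-flat limit on which $|\nabla Ric|(p_\infty,0)=1$, a contradiction.

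The one genuine difference is the point-picking step. You take the global maximum of the scale-invariant quantity $\Phi(x,t)=|\nabla Ric|^2t^3$, which is the cleanest choice on a closed manifold and immediately gives the two-sided control $|\nabla Ric|(y,s)\le |\nabla Ric|(x_k,t_k)(t_k/s)^{3/2}$ everywhere, so that on the rescaled backward window $[-\tfrac12\lambda_k^2 t_k,0]$ one has $|\widetilde{\nabla Ric}|\le 2\sqrt2$. The paper instead proves a separate Perelman-style iterative point-picking lemma, producing a point $\bar p_k$ with $|\nabla Ric|\le 8\bar Q_k$ on a backward slab of length $\beta_k\bar Q_k^{-2/3}$. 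Your version is more elementary and entirely adequate for the compact Lemma~\ref{MLG}; the paper's version is chosen because it localizes: the same iterative scheme, with distances tracked as well as times, is reused verbatim for the local estimate (Lemma~3 and Theorem~\ref{MTL}), where a global maximum of $\Phi$ is not available. Your closing remark about bootstrapping to $C^3$ is unnecessary: once $|\widetilde{Rm}|$ is uniformly bounded on the backward window, Shi's estimates already give uniform bounds on all $|\nabla^l\widetilde{Rm}|$ on a slightly shorter window, and Hamilton compactness delivers $C^\infty$ convergence, so $\nabla Ric$ passes to the limit automatically.
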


\begin{proof}
Suppose no such $\alpha$ exists, then we can find a sequence of Ricci flows $g_k(t)_{t\in[0,T_k]}$, points $p_k=(x_k,t_k)$, and $\alpha_k\nearrow\infty$ such that $t_k>0$ and
$|\nabla Ric|_{g_k}(p_k) > \alpha_k \left( K_kT_kt_k^{-1} \right)^{\frac{3}{2}}$. 
By the point-picking lemma afterwards, we can find $\bar p_k=(\bar x_k,\bar t_k)$ associated to $p_k$ such that
\begin{itemize}
\item $|\nabla Ric|_{g_k}(\bar p_k) > \alpha_k \left( K_kT_k\bar t_k^{-1} \right)^{\frac{3}{2}}$ and
\item $|\nabla Ric|_{g_k} \leq 8 \bar Q_k:= 8|\nabla Ric|_{g_k}(\bar p_k)$ on $M\times [\bar t_k-\beta_k\bar Q_k^{-\frac{2}{3}} ,\bar t_k]$, where $\beta_k:=\frac{1}{2}\alpha_k^{\frac{2}{3}}\eta$.
\end{itemize}
Consider the rescaling Ricci flows $\widetilde g_k := \bar Q_k^{\frac{2}{3}} g_k$ with $\widetilde t := \bar Q_k^{\frac{2}{3}} (t -\bar t_k) \in[-\beta_k,0]$.
Then $|Ric_{\widetilde{g}_k}|_{\tilde{g}_k}\leq K_k \bar Q_k^{-\frac{2}{3}}\leq \alpha_k^{-\frac{2}{3}}\frac{t_k}{T_k} \searrow 0$ and $|\nabla Ric_{\widetilde{g}_k}|_{\widetilde{g}_k}\leq 8$ on $M\times [-\beta_k,0]$.
In particular, $Ric $ has a uniform $C^{0,\sigma}$-bound.

Using Anderson's lemma mentioned before, 
$inj_{\widetilde{g}_k}\geq  \frac{\delta}{\sqrt{K_k}}Q_k^{\frac{1}{3}} =   \frac{\delta}{\sqrt{K_k}}\alpha_k^{\frac{1}{2}}\left( K_kT_k\bar t_k^{-1} \right)^{\frac{1}{2}} \nearrow\infty$ and the boundedness of $|Ric_{\widetilde{g}_k}|_{\widetilde{g}_k}$ ensures the existence of harmonic coordinates on a domain of uniform size. Moreover, $\widetilde{g}_k$'s may have a uniform $C^{1,\sigma}$-bound in this domain.
By elliptic regularity, $\widetilde{g}_k$'s, which satisfy $\Delta\widetilde{g}_k=-2Ric_{\widetilde{g}_k}+P(\widetilde{g}_k,\partial\widetilde{g}_k)$, have a uniform $C^{2,\sigma}$-bound. Namely, $|Rm_{\widetilde{g}_k}|_{\widetilde{g}_k}$'s are uniformly bounded on $M\times (-\beta_k,0]$. 
Applying Shi's estimate, all higher derivatives of $Rm_{\widetilde{g}_k}$'s are uniformly bounded on $M\times [-\frac{1}{2}\beta_k,0]$. So the marked metrics $(\widetilde g_k, \bar p_k)$ converge smoothly to a Ricci flat metric $(g_\infty, p_\infty)$. This contradicts $|\nabla Ric_{g_\infty} |_{g_\infty}(p_\infty)=1$.
Therefore, we have $|\nabla Ric| \leq \alpha  \left( KTt^{-1} \right)^{\frac{3}{2}}$.
\end{proof}

\begin{Lma}[Point-picking lemma]
For any $\alpha>0$ and any closed smooth Ricci flow $(M,g(t))_{t\in\left[0,T\right]}$ with $|Ric|\leq K$ and $T\geq\frac{\eta}{K}$,
if $|\nabla Ric| (p_0)> \alpha  \left( KTt_0^{-1} \right)^{\frac{3}{2}}$ at some point $p_0=(x_0,t_0)$ with $t_0>0$, 
then there exists $\bar p=(\bar x,\bar t), \bar t>0$, such that 
$$|\nabla Ric|(\bar p) > \alpha \left( KT\bar t^{-1} \right)^{\frac{3}{2}}\ \ \mbox{ and }\ \ \
|\nabla Ric| \leq 8 \bar Q:= 8 |\nabla Ric|(\bar p)$$ on $M\times [\bar t-\beta\bar Q^{-\frac{2}{3}} ,\bar t]$, where $\beta:=\frac{1}{2}\alpha^{\frac{2}{3}}\eta$.
\end{Lma}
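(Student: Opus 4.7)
The plan is to run a Perelman-style iterative point-picking argument by contradiction. Suppose no such $\bar p$ exists. Starting from $p_0$, I would construct inductively a sequence of points $p_k = (x_k, t_k)$ with $Q_k := |\nabla Ric|(p_k)$ satisfying $Q_{k+1} > 8 Q_k$ and $p_{k+1} \in M \times [t_k - \beta Q_k^{-2/3}, t_k]$. The step from $p_k$ to $p_{k+1}$ is justified as follows: if $p_k$ satisfies the first bound $Q_k > \alpha(KT/t_k)^{3/2}$, then by the contradiction hypothesis $p_k$ cannot serve as $\bar p$, so the second bound fails at $p_k$ and yields the desired $p_{k+1}$.

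The crucial step is then to verify two things: that the $t_k$'s remain bounded away from $0$, and that the first inequality $Q_k > \alpha(KT/t_k)^{3/2}$ propagates so the induction continues indefinitely. From $Q_{k+1} > 8 Q_k$ one has $Q_k^{-2/3} < 4^{-k} Q_0^{-2/3}$, so the geometric series gives
$$t_k \ \geq\ t_0 - \beta \sum_{i=0}^{k-1} Q_i^{-2/3} \ \geq\ t_0 - \tfrac{4}{3}\beta\, Q_0^{-2/3}.$$
Substituting $Q_0^{-2/3} < t_0 / (\alpha^{2/3} KT)$ (from the hypothesis on $p_0$), the choice $\beta = \tfrac{1}{2}\alpha^{2/3}\eta$, and $KT \geq \eta$ yields $t_k \geq t_0(1 - \tfrac{2\eta}{3KT}) \geq t_0/3$ for all $k$. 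Combining this with $Q_k > 8^k Q_0$ and the bound $\alpha(KT/t_k)^{3/2} \leq 3^{3/2} \alpha(KT/t_0)^{3/2} < 3^{3/2} Q_0 < 8 Q_0 \leq Q_k$ (for $k \geq 1$), I would conclude that the first inequality persists along the sequence, so the iteration never terminates.

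Finally, since $M$ is closed and the $p_k$ all lie in the compact parabolic slab $M \times [t_0/3, t_0]$, continuity of $|\nabla Ric|$ on $M \times [0,T]$ forces a uniform upper bound on $Q_k$, contradicting $Q_k \to \infty$. Hence some $p_k$ already satisfies both conditions and can be taken as $\bar p$. The main obstacle is the bookkeeping that preserves the first inequality through the iteration while the time coordinate shrinks; the constant $\beta = \tfrac{1}{2}\alpha^{2/3}\eta$ is calibrated precisely so that the geometric time-budget $\tfrac{4}{3}\beta Q_0^{-2/3}$ never consumes more than $\tfrac{2}{3}t_0$, which is exactly the room required for the first condition to carry over uniformly.
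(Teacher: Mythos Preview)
Your proposal is correct and follows essentially the same Perelman-style point-picking argument as the paper. The only difference is organizational: the paper propagates the first inequality step-by-step via the local estimate $\beta Q_k^{-2/3}\leq \tfrac{1}{2}t_k$ (hence $t_{k+1}\geq \tfrac{1}{2}t_k$ and $Q_{k+1}>8Q_k>\alpha(KT t_{k+1}^{-1})^{3/2}$), whereas you sum the geometric series once to get the global bound $t_k\geq t_0/3$ and then verify the first inequality uniformly; both bookkeepings are valid.
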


\begin{proof}

Here we use Perelman's method for proving his pseudo-locality theorem (cf. \cite[Theorem 10.1]{Perelman02}).
Start from the point $p_0$ with  $Q_0:=|\nabla Ric| (p_0)> \alpha \left( KTt_0^{-1} \right)^{\frac{3}{2}}$. 
If $|\nabla Ric| \leq 8Q_0$ on $M\times [ t_0-\beta Q_0^{-\frac{2}{3}} , t_0]$, then we are done.
Suppose this is not the case, then there exists a point $p_1=(x_1,t_1)$ with $t_1\in[t_0-\beta Q_0^{-\frac{2}{3}}, t_0] $ 
and $Q_1:=|\nabla Ric| (p_1) > 8Q_0$. 
\begin{figure}[h]
\includegraphics[bb= 480 40 0 220 ]{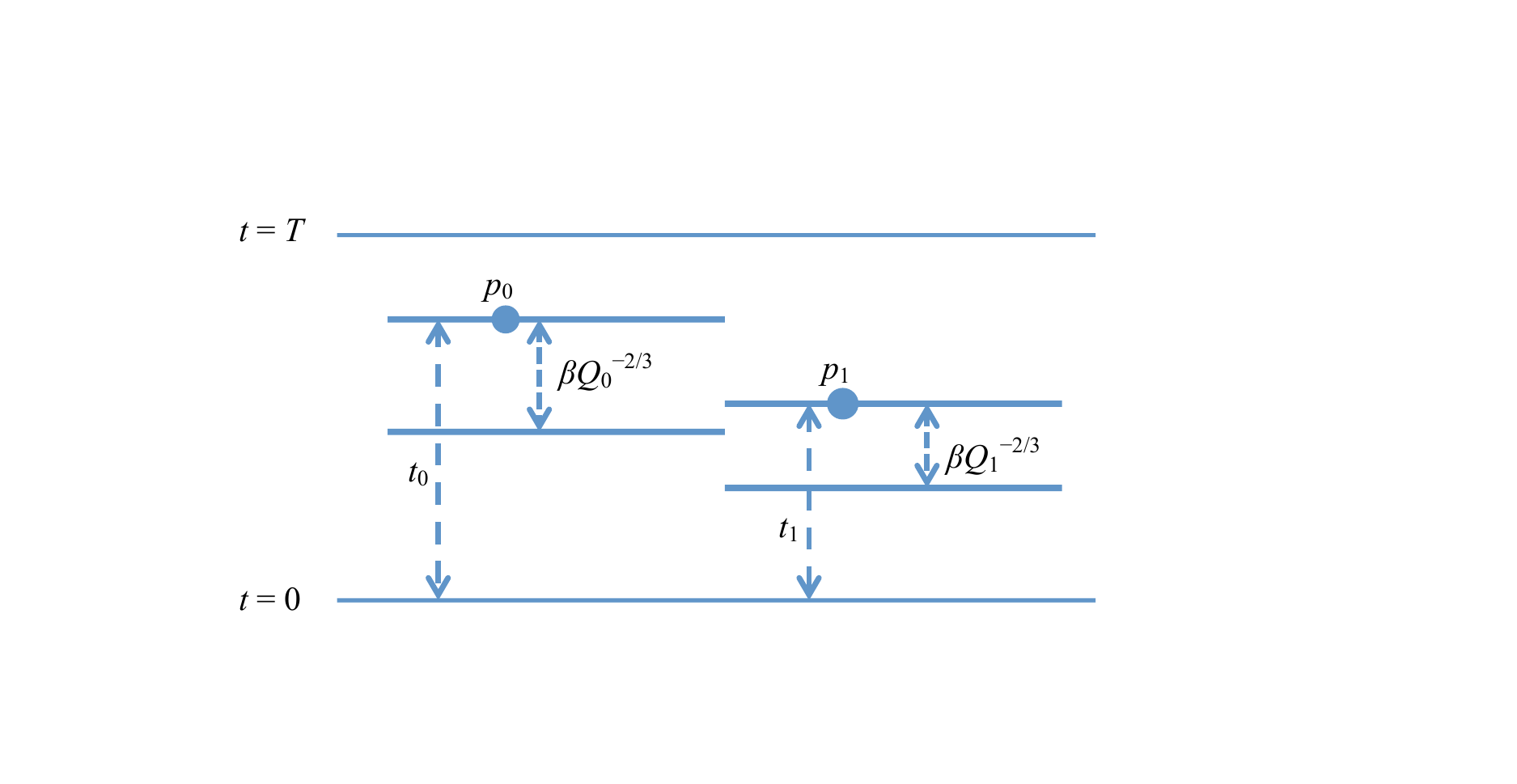}
\end{figure}
Note that $\beta=\frac{1}{2}\alpha^{\frac{2}{3}}\eta $ implies that $\beta Q_0^{-\frac{2}{3}}\leq \frac{1}{2}t_0$. 
Thus, $t_1 \geq t_0   -\beta Q^{-\frac{2}{3}} \geq \frac{1}{2} t_0$.
In particular, $$Q_1>8Q_0>8 \alpha  \left( KTt_0^{-1} \right)^{\frac{3}{2}} \geq \alpha  \left( KTt_1^{-1} \right)^{\frac{3}{2}}$$ 
and thus 
$$ \beta Q_1^{-\frac{2}{3}}  \leq \frac{1}{2} t_1.$$
If $|\nabla Ric| \leq 8Q_1$ on $M\times [ t_1-\beta Q_1^{-\frac{2}{3}} , t_1]$, then we are done.
Suppose not, then we can further find $p_2$ so that $Q_2>8Q_1> \alpha  \left( KTt_2^{-1} \right)^{\frac{3}{2}}$ by similar process.
Similarly,  $\beta Q_2^{-\frac{2}{3}} \leq \frac{1}{2}t_2$  and so on. 
So $t_k$ always stays in $(0,T]$.
Therefore, such process could be continued until we find a $p_k$ so that $|\nabla Ric|\leq 8Q_k$ in $M\times [ t_k-\beta Q_k^{-\frac{2}{3}} , t_k]$.
Such $p_k$ must exist because $|\nabla Ric|(p_k)>8^kQ_0$ must be bounded in $M\times [0,T]$.
\end{proof}

Now we are able to finish the proof of the compact version of Theorem \ref{MTL}.

\begin{proof}
For a Ricci flow $(M^n,g(t))$ with $|Ric|\leq K$ and $inj\geq \delta K^{-\frac{1}{2}}>0$ for all $t\in[0,T]$, by Lemma \ref{MLG}, we have $|\nabla Ric| \leq \alpha (KTt^{-1})^{\frac{3}{2}}$ for all $t\in\left(0,T\right]$.
For each fixed $t\in\left(0,T\right]$, we consider $\widetilde g := (KTt^{-1})g(t)$ 
and obtain 
$$inj_{\widetilde{g}}\geq \delta(Tt^{-1})^{\frac{1}{2}}\geq \delta, \ \ |Ric_{\widetilde{g}}|_{\widetilde{g}}\leq T^{-1}t\leq 1\ \mbox{ and }\ |\nabla Ric_{\widetilde{g}}|_{\widetilde{g}} \leq \alpha. $$
So by the elliptic regularity (1), the metric tensor $\widetilde{g}$ has a uniform $C^{2,\sigma}$-bound which depends on $n,\sigma, \delta$ and $\alpha$. By choosing an arbitrary $\sigma\in(0,1)$, $|Rm_{\widetilde{g}}|_{\widetilde{g}}$ is bounded by a constant depending only on $n, \delta$ and $\alpha$. After rescaling back, we see that the curvature of $g(t)$ satisfies $|Rm|\leq CKTt^{-1}$ with $C=C(n, \delta, \alpha)$. Since $t$ is arbitrary in $(0,T]$ and $\alpha=\alpha(\delta,\eta,n)$, the theorem is proved. 
\end{proof}

\section{Local estimate of $Rm$}

The estimate in the previous section also holds locally. Namely, the curvature operator can be bounded if $|Ric|$ and $inj$ are bounded in a parabolic neighborhood of uniform size. To prove this, we need the following local point-picking lemma which shows that for any point with large $|\nabla Ric|$, one can find another point nearby equipped with a controlled parabolic neighborhood.

\begin{Lma}[point-picking lemma, local version]
For any $\eta, \alpha>0$ and any Ricci flow $(B_{4\sqrt{T}}(x_0,t),g(t))_{t\in\left[0,T\right]}$
with $|Ric|\leq K$ and $T\geq\frac{\eta}{K}$, which is smooth up to boundary, 
if $|\nabla Ric| (p)> \alpha  \left( KTt^{-1} \right)^{\frac{3}{2}}$ at some point $p=(x,t)$ 
in $(B_{2\sqrt{T}}(x_0,t),g(t))_{t\in\left(0,T\right]}$, then 
there exist $\epsilon=\epsilon(\eta)>0$ and $\bar p=(\bar x,\bar t)$ with $d_{\bar t}(\bar x, x_0)<4\sqrt{T}$ and $\bar t>0$, such that 
$$|\nabla Ric|(\bar p) > \alpha \left( KT\bar t^{-1} \right)^{\frac{3}{2}}\ \ \mbox{ and }\ \ \
|\nabla Ric| \leq 8 \bar Q:= 8 |\nabla Ric|(\bar p)$$ in 
$B_{\beta^{\frac{1}{2}} \bar Q^{-\frac{1}{3}}}(\bar x,t), t\in[\bar t-\beta\bar Q^{-\frac{2}{3}} ,\bar t]$, where $\beta:=\frac{1}{2}\epsilon^2\alpha^{\frac{2}{3}}\eta$. 
In particular, $B_{\beta^{\frac{1}{2}} \bar Q^{-\frac{1}{3}}}(\bar x,t)\subset B_{4\sqrt{T}}(x_0,t)$ for each $t\in[\bar t-\beta\bar Q^{-\frac{2}{3}} ,\bar t]$. 
\end{Lma}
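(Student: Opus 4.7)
The plan is to mimic Perelman's iterative point-picking from the compact Lemma proven just above, but with the extra task of ensuring that every iterate, together with its parabolic neighborhood, remains inside the region $B_{4\sqrt{T}}(x_0,t) \times [0,T]$ on which the flow is defined. The choice $\beta=\frac{1}{2}\epsilon^2\alpha^{2/3}\eta$ is made precisely so that two small budgets close: one in time (so distance distortion from $|Ric|\leq K$ is controlled) and one in space (so the iterates cannot escape the outer ball).

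I would start the iteration with $p_0 := p$ and $Q_0 := |\nabla Ric|(p_0)$. Inductively, given $p_k=(x_k,t_k)$ with $Q_k=|\nabla Ric|(p_k) > \alpha(KTt_k^{-1})^{3/2}$, either the conclusion already holds at $p_k$ and we set $\bar p := p_k$, or there is a point $p_{k+1}=(x_{k+1},t_{k+1})$ in the parabolic neighborhood $B_{\beta^{1/2}Q_k^{-1/3}}(x_k,s)\times [t_k-\beta Q_k^{-2/3}, t_k]$ with $Q_{k+1} > 8 Q_k$. Two geometric estimates are crucial. First, since $Q_k > 8^k Q_0 \geq 8^k \alpha K^{3/2}$ (using $t_k \leq T$), one has $K \beta Q_k^{-2/3} \leq \frac{1}{2}\epsilon^2 \eta \cdot 8^{-2k/3}$, so the total $K$-weighted time consumed over all iterations is at most $\frac{2}{3}\epsilon^2 \eta$; by the distance distortion inherited from $|\partial_t g|\leq 2Kg$, any two time slices encountered are related by a metric factor of order $e^{\epsilon^2\eta}$. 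Second, the spatial jumps $\beta^{1/2} Q_k^{-1/3}$ form a geometric series with ratio $2^{-1}$, and their total is at most $2\beta^{1/2}Q_0^{-1/3} \leq \epsilon\sqrt{2\eta/K} \leq \epsilon\sqrt{2T}$, where the last inequality uses $K \geq \eta/T$.

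Combining these, an induction would show that for every $k$, both $x_k$ and the full parabolic neighborhood around it sit inside $B_{4\sqrt{T}}(x_0,\cdot)$ in the relevant time slices: the accumulated displacement of $x_k$ from $x_0$ in any time slice encountered is at most $e^{C\epsilon^2\eta}\bigl(2\sqrt{T} + \epsilon\sqrt{2T}\bigr)$ for some numerical constant $C$, and the final spatial radius $\beta^{1/2}\bar Q^{-1/3}$ adds at most $(\epsilon/\sqrt{2})\sqrt{T}$. A single smallness condition on $\epsilon=\epsilon(\eta)$ makes the total strictly less than $4\sqrt{T}$. The time coordinate stays positive because $\beta Q_k^{-2/3} \leq \frac{1}{2}t_k$ (using $KT\geq\eta$ and $Q_k > \alpha(KTt_k^{-1})^{3/2}$), so $t_{k+1} \geq t_k/2 > 0$. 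Termination of the iteration then follows as in the global case: $Q_k > 8^k Q_0$ grows unboundedly while $|\nabla Ric|$ is bounded on the closure of the smooth region, so the dichotomy must eventually favor the first alternative, giving the desired $\bar p$.

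The principal obstacle is the book-keeping coupling between the spatial jumps and the time-varying metric: each jump $\beta^{1/2}Q_k^{-1/3}$ is measured in $g(t_{k+1})$, whereas the target ball $B_{4\sqrt{T}}(x_0,t)$ is measured in $g(t)$ for various $t$, so the triangle inequality must be applied slice by slice with the distortion factor inserted at each step. The hypothesis $T \geq \eta/K$ is what saves the day, since it bounds the accumulated distortion by a function of $\eta$ alone, independently of $K$, so that a single smallness of $\epsilon = \epsilon(\eta)$ closes the loop.
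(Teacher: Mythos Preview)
Your proposal is correct and follows essentially the same route as the paper: Perelman-style iteration with $Q_{k+1}>8Q_k$, the time bound $\beta Q_k^{-2/3}\leq\tfrac{1}{2}t_k$ to keep $t_k>0$, the geometric series $\sum \beta^{1/2}Q_k^{-1/3}\leq 2\beta^{1/2}Q_0^{-1/3}$ for the spatial drift, and the distortion bound $e^{K|t_k-\tau|}$ from $|Ric|\leq K$ to compare distances at different time slices, all closed by choosing $\epsilon=\epsilon(\eta)$ small. You are in fact slightly more explicit than the paper about inserting the distortion factor at each telescoping step; the paper's second bullet writes $d_{t_k}(x_k,x_0)\leq d_{t_{k-1}}(x_{k-1},x_0)+\beta^{1/2}Q_{k-1}^{-1/3}$ as if the metrics were the same and only restores the $e^{K|t_k-\tau|}$ factor in the final containment check, whereas you carry the cumulative factor $e^{C\epsilon^2\eta}$ throughout---a cosmetic difference that leads to the same smallness condition on $\epsilon$.
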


\begin{proof}
To abbreviate the notation, we define the backward parabolic metric ball based at $(x_*,t_*)$ by
$$\mathcal{B}(r;x_*,t_*):= \bigcup_{t\in(t_*-r^2, t_*]} B_{r}(x_*,t)= \{ (x,t) | dist_{g(t)}(x,x_*)< r, t\in(t_*-r^2 ,t_*] \}.$$
We use the same induction argument as in the proof of the global point-picking lemma. 
In the proof of the global version, we have seen that to proceed the argument, 
we need to justify that $p_k$'s can stay in a finite time-region.
Here the situation is more subtle: we need to make sure that there exists an $\epsilon>0$ such that
$\mathcal{B}(\beta^{\frac{1}{2}} Q_k^{-\frac{1}{3}};x_k,t_k) $ is contained in $ \bigcup_{t\in(0,T]} B_{4\sqrt{T}}(x_0,t)$,
where $\beta:=\frac{1}{2}\epsilon^2\alpha^{\frac{2}{3}}\eta$.

Now we start from $p=(x,t)$ and look at the parabolic region $\mathcal{B}(\beta^{\frac{1}{2}} Q^{-\frac{1}{3}};x,t)$.
If $\mathcal{B}(\beta^{\frac{1}{2}} Q^{-\frac{1}{3}};x,t)\subset \bigcup_{t\in(0,T]} B_{4\sqrt{T}}(x_0,t)$ and $|\nabla Ric| \leq 8 Q:= 8 |\nabla Ric|( p)$ in this region, then we are done. 
If not, then there is a $p_1\in\mathcal{B}(\beta^{\frac{1}{2}} Q^{-\frac{1}{3}};x,t)$ such that $Q_1:=|\nabla Ric|(p_1) > 8Q$. Similar to the proof of the compact version, we wish to go on finding successive $p_k$'s until we acquire $\bar p$. So we should check 
\begin{itemize}
\item $t_k$'s will not reach $0$:
$\beta \cdot Q_k^{-\frac{2}{3}} \leq \frac{1}{2} \epsilon^2\alpha^{\frac{3}{2}} \eta  \cdot \alpha^{-\frac{2}{3}} K_k^{-1}T_k^{-1} t_k \leq\frac{1}{2}\epsilon^2t_k$
\item $x_k$'s stay in a distance less than $3\sqrt{T}$ from the center $x_0$ at each time $t_k$:
\begin{align*}
d_{t_k}(x_k,x_0) & \leq d_{t_{k-1}}(x_{k-1},x_0)+\sqrt{\beta Q_{k-1}^{-\frac{2}{3}}}\\
                           & \leq d_{t_{k-2}}(x_{k-2},x_0)+\sqrt{\beta Q_{k-2}^{-\frac{2}{3}}}+\sqrt{\beta Q_{k-1}^{-\frac{2}{3}}}\\
                           & \leq  d_{t_1}(x_1,x_0)+\sqrt{\beta Q_1^{-\frac{2}{3}}}   +\cdots +\sqrt{\beta Q_{k-1}^{-\frac{2}{3}}}\\
                           & \leq  2\sqrt{T}+\sqrt{\beta Q^{-\frac{2}{3}}}+\sqrt{\beta Q_1^{-\frac{2}{3}}}   +\cdots +\sqrt{\beta Q_{k-1}^{-\frac{2}{3}}}\\
                           & <2\sqrt{T}+\sqrt{\beta Q^{-\frac{2}{3}}}+ \sqrt{\frac{1}{4}\beta Q^{-\frac{2}{3}}}   +\cdots +\sqrt{ \frac{1}{4^{k-1}}\beta Q^{-\frac{2}{3}}}\\
                           & < 2\sqrt{T}+2 \sqrt{\beta Q^{-\frac{2}{3}}}\\
                           & <2\sqrt{T}+ 2 \epsilon \sqrt{\frac{1}{2}t}\\
                           & <3 \sqrt{T}, \mbox{ if } \epsilon \mbox{ is small enough, for instance, less than }\frac{1}{\sqrt{2}}. 
\end{align*}

\begin{figure}[h]
\includegraphics[bb= 480 60 0 220 ]{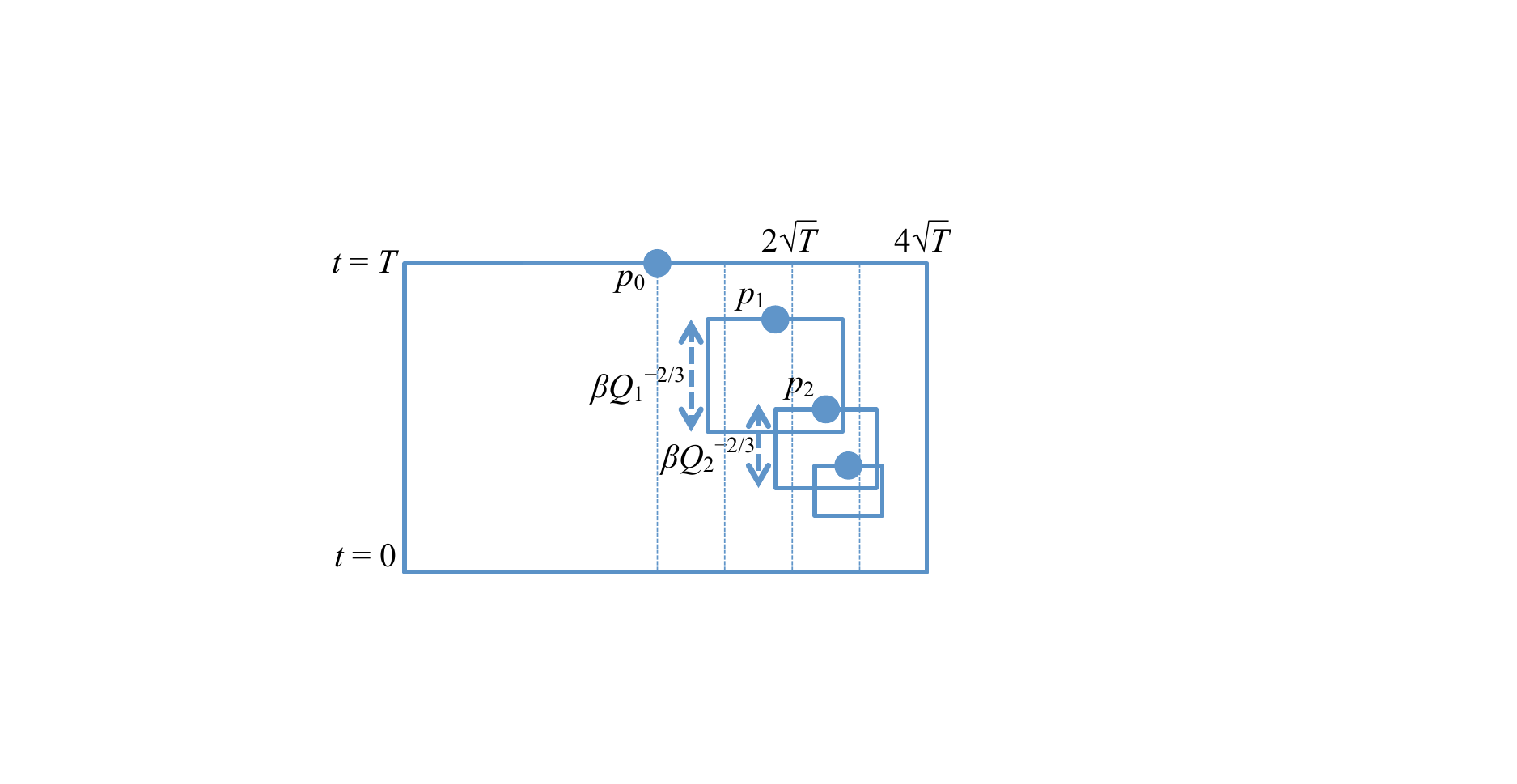}
\end{figure}

\item $\mathcal{B}(\beta^{\frac{1}{2}} Q_k^{-\frac{1}{3}};x_k,t_k)\subset  \bigcup_{t\in(0,T]} B_{4\sqrt{T}}(x_0,t)$:

For any $q=(\xi,\tau)\in\mathcal{B}(\beta^{\frac{1}{2}} Q_k^{-\frac{1}{3}};x_k,t_k)$, since 
$$K|t_k-\tau|\leq K \beta \cdot Q_k^{-\frac{2}{3}}\leq \frac{1}{2}\epsilon^2 \eta T^{-1}t_k \leq  \frac{1}{2}\epsilon^2 \eta ,$$ 
we have
\begin{align*}
d_{\tau}(\xi, x_0) & \leq  d_{\tau}(\xi, x_k)+d_{\tau}(x_k, x_0) \\
                          & \leq   \beta^{\frac{1}{2}} Q_k^{-\frac{1}{3}} + d_{t_k}(x_k,x_0)\cdot e^{K|t_k-\tau|} \\
                          & \leq  \epsilon\sqrt{\frac{1}{2} t_k} +3\sqrt{T}\cdot e^{\frac{1}{2}\epsilon^2 \eta } \\
                          &\leq \sqrt{T}\left(\sqrt{\frac{1}{2}} \epsilon+3\cdot e^{\frac{1}{2}\epsilon^2 \eta }\right)\\
                          &\leq 4\sqrt{T}, \mbox{ if }\epsilon = \epsilon(\eta) \mbox{ is small enough.}
\end{align*}
\end{itemize}

Therefore, this local point-picking lemma follows by the same argument of induction as in the proof of Lemma 2.
\end{proof}

Now we demonstrate the proof of Theorem \ref{MTL} and Theorem  \ref{MTLh}.

\begin{proof}[Proof of Theorem \ref{MTL}]

The idea of proof is the same to the compact version, so we will be a bit sketchy on the whole process but focus on 
the crucial steps. 
Suppose there exist $(B_{4\sqrt{T}}(x_0,t),g_k(t))_{t\in[0,T]}$, with points $x_k\in B_{2\sqrt{T}}(x_0,t_k)$ and $\alpha_k\to\infty$ such that 
the first conclusion is not true, i.e., $Q_k:=|\nabla Ric|_{g_k}(p_k)>\alpha_k(K_kT_k t_k^{-1})^{\frac{3}{2}}$.
By the local point-picking lemma, we can find $\bar{p}_k$'s and the associated parabolic regions  
$\mathcal{B}( \beta^{\frac{1}{2}}\bar{Q}_k^{\frac{1}{3}};\bar{x}_k, \bar{t}_k)$'s to run the blow-up procedure,
where $\beta:=\frac{1}{2}\epsilon^2\alpha^{\frac{2}{3}}\eta$. 
Indeed, we rescale the metric $g_k(t)$ on $\mathcal{B}( \beta^{\frac{1}{2}}\bar{Q}_k^{\frac{1}{3}};\bar{x}_k, \bar{t}_k)$
by $\bar{Q}_k^{\frac{2}{3}}:=(|\nabla Ric|_{g_k}(\bar{p}_k))^{\frac{2}{3}}>\alpha_k^{\frac{2}{3}}K_kT_kt_k^{-1}$ and obtain 
$$|Ric_{\widetilde{g}_k}|_{\tilde{g}_k}\leq K_k \bar Q_k^{-\frac{2}{3}}\leq \alpha_k^{-\frac{2}{3}}\frac{\bar{t}_k}{T_k} \searrow 0$$ 
and 
$$inj_{\widetilde{g}_k} \geq \delta \cdot \alpha_k^{\frac{1}{3}}(T_kt_k^{-1})^{\frac{1}{2}} \geq \delta \alpha_k^{\frac{1}{3}}\nearrow \infty$$
on $\widetilde{\mathcal{B}}( \beta^{\frac{1}{2}};\bar{x}_k, \bar{t}_k)$, as $\alpha_k \nearrow \infty$.

As in the proof of the compact version, we encounter a contradiction on a sub-sequential limit. 
Therefore, there exists $\alpha>0$ such that $|\nabla Ric| \leq \alpha(KTt^{-1})^{\frac{3}{2}}$ for all Ricci flows $(B_{2\sqrt{T}}(x_0,t),g(t))_{t\in[0,T]}$.

Fix an arbitrary $t$ and rescale the metric by letting $\tilde g := KTt^{-1} g(t)$, as in the proof of the compact version, 
one obtains
$$inj_{\widetilde{g}}\geq \delta(Tt^{-1})^{\frac{1}{2}}\geq \delta, \ \ |Ric_{\widetilde{g}}|_{\widetilde{g}}\leq T^{-1}t\leq 1\ \mbox{ and }\ |\nabla Ric_{\widetilde{g}}|_{\widetilde{g}} \leq \alpha$$
in $\widetilde{B}_{2\sqrt{\eta}}(x_0,t)$.
So by the elliptic regularity (1), $|Rm_{\widetilde{g}}|_{\widetilde{g}}\leq C$ in $\widetilde{B}_{\rho}(x_0,t)$
where $C=C(n, \delta,\alpha)$ and $\rho=\rho(n,\delta,\eta)$. After rescaling back, we see that the curvature of $g(t)$ satisfies $|Rm|\leq CKTt^{-1}$ in $B_{\rho\sqrt{K^{-1}T^{-1}t}}(x_0,t)$ with $C=C(n, \delta, \alpha)$ and thus prove the theorem. 
\end{proof}

\begin{proof}[Proof of Theorem \ref{MTLh}]
The key of this proof is the following observation:
in the proof of local point-picking lemma, one can see that the parabolic neighborhood associated to $\bar p$ must be small if 
$\bar t$ is close to $0$. Precisely, the radius is $\beta^{\frac{1}{2}}\bar Q^{-\frac{1}{3}}$ and its square is less than $\frac{1}{2}\bar t$.
Hence, when performing the blow-up argument along these picked points, we do not need a uniform lower bound of injectivity radius. 
Instead, the injectivity radius is allowed to decay at a rate proportional to $\bar Q^{-\frac{1}{3}}:=  \alpha_k^{-\frac{1}{3}}(K_kT_kh_k^{-1}(\bar t_k))^{-\frac{1}{2}}$, i.e., either $inj \geq K_k^{-\frac{1}{2}}$ or  $inj(x,t)\geq h_k^{\frac{1}{2}}(t)$ is enough.
It is not hard to check that, if the condition $|\nabla Ric|(p)\leq (\alpha KTt^{-1})^{\frac{3}{2}}$ in the local point-picking lemma 
is replaced by $|\nabla Ric|(p)\leq (\alpha KTh^{-1}(t))^{\frac{3}{2}}$, for some positive function $h(t)\leq t$ for all $t\in(0,T]$,
then the lemma still holds with the conclusion replaced by $|\nabla Ric|(\bar p)\leq (\alpha KTh^{-1}(\bar t))^{\frac{3}{2}}$. 
Now we use this modified version to prove Theorem \ref{MTLh}.

Again we argue by contradiction. Suppose there exist $(B_{4\sqrt{T}}(x_0,t),g_k(t))_{t\in[0,T]}$, with points $x_k\in B_{2\sqrt{T}}(x_0,t_k)$ and $\alpha_k\to\infty$ such that 
$Q_k:=|\nabla Ric|_{g_k}(p_k)>\alpha_k(K_kT_k h_k^{-1})^{\frac{3}{2}}$.
By the local point-picking lemma (with $t$ being replaced by $h$ in the bound of $\nabla Ric$), we can find $\bar{p}_k$'s and the associated parabolic regions  
$\mathcal{B}( \beta^{\frac{1}{2}}\bar{Q}_k^{\frac{1}{3}};\bar{x}_k, \bar{t}_k)$'s to run the blow-up procedure,
where $\beta:=\frac{1}{2}\epsilon^2\alpha^{\frac{2}{3}}\eta$. 
Indeed, we rescale the metric $g_k(t)$ on $\mathcal{B}( \beta^{\frac{1}{2}}\bar{Q}_k^{\frac{1}{3}};\bar{x}_k, \bar{t}_k)$
by $\bar{Q}_k^{\frac{2}{3}}:=(|\nabla Ric|_{g_k}(\bar{p}_k))^{\frac{2}{3}}>\alpha_k^{\frac{2}{3}}K_kT_kh_k^{-1}(\bar{t}_k)$ and obtain 
$$|Ric_{\widetilde{g}_k}|_{\tilde{g}_k}\leq K_k \bar Q_k^{-\frac{2}{3}}  \leq \alpha_k^{-\frac{2}{3}}\frac{\bar{h}_k}{T_k}  \leq \alpha_k^{-\frac{2}{3}}\frac{\bar{t}_k}{T_k}\searrow 0$$ 
and 
$$inj_{\widetilde{g}_k} \geq \delta \cdot\min\{ K_k^{-\frac{1}{2}}, h_k^{\frac{1}{2}}(t)\} \cdot \alpha_k^{\frac{1}{3}}(K_kT_kh_k^{-1}(\bar{t}_k))^{\frac{1}{2}}.$$ 
To derive a uniform lower bound from the second inequality, we study the following two cases:
\begin{itemize}
\item At points where $inj_{g_k} \geq \delta K_k^{-\frac{1}{2}}$, we have 
$$inj_{\widetilde{g}_k} \geq \delta \cdot   K_k^{-\frac{1}{2}}\cdot \alpha_k^{\frac{1}{3}}(K_kT_kh_k^{-1}(\bar{t}_k))^{\frac{1}{2}}\geq \delta \cdot   \alpha_k^{\frac{1}{3}}(T_kh_k^{-1}(\bar{t}_k))^{\frac{1}{2}}\nearrow \infty.$$ 
\item At points where $inj_{g_k(t)} \geq \delta h_k^{\frac{1}{2}}(t)$, we have  
$$inj_{\widetilde{g}_k} \geq \delta h_k^{\frac{1}{2}}(t)\cdot \alpha_k^{\frac{1}{3}}(K_kT_kh_k^{-1}(\bar{t}_k))^{\frac{1}{2}} \geq m^{-\frac{1}{2}}\delta \eta^{\frac{1}{2}} \alpha_k^{\frac{1}{3}} \nearrow \infty$$ by the assumption of $h_k(t)$.
\end{itemize}
In either case, the rescaled injectivity radius has a uniform lower bound on  $\widetilde{\mathcal{B}}( \beta^{\frac{1}{2}};\bar{x}_k, \bar{t}_k)$.
So a contradiction can be derived as before and one can conclude that 
$|\nabla Ric|\leq \alpha(KTh^{-1})^{\frac{3}{2}}$.
At last, by using a rescaling argument as in the proof of Theorem \ref{MTL}, one achieves  $|Rm|\leq CKTh^{-1}$ for some $C=C(n,\delta,\eta,m)$ in $B_{\rho\sqrt{K^{-1}T^{-1}h(t)}}(x_0,t)$ for every fixed $t$ and thus the theorem is proved.
\end{proof}

\begin{Rmk}
The proto type for the function $h$ is $h(t)=t$. In general, if $h(t)$ is a concave function or a decreasing function, or a conjunction of them, then it satisfies the assumption that there exists $m>0$ such that for all $t^*\in (0,T]$, $h(t)\geq m^{-1}\cdot h(t^*)$ for all $t\in\left[\frac{1}{2}t^*,t^*\right]$. For instance, $h(t)$ could be $\frac{T}{\pi}\sin \frac{\pi t}{T}$. 
\end{Rmk}

\section{A geometrical alternative to De Giorgi-Nash-Moser's iteration}

In this section, we compare our geometrical blow-up argument with Moser's iteration technique.
In particular, by using Moser's iteration and results in \cite{KotschwarMunteanuWang15},
we derive a theorem which requires a weaker injectivity radius assumption, and has
weaker conclusion, than Theorem \ref{MTL}. 
We first recall the following crucial lemma \cite[Proposition 1]{KotschwarMunteanuWang15}. 
\begin{Prop} \label{KMWProp1}
Let $(M^n,g(t))$ be a smooth solution to the Ricci flow defined for $0\leq t\leq T$. 
Assume that there exist $A$, $K>0$ such that 
$|Ric|\leq K  \mbox{ on } \overline{\mathcal{P}(\frac{A}{\sqrt{K}};x_0,T)}:=B_{\frac{A}{\sqrt{K}}} \left( x_0,0\right)\times [0,T]$.
Then, for any $p\geq 3$, there exists $c=c(n,p)>0$ so that for all $0\leq t\leq T$
$$\|Rm\|_{L^p(B')}^p (t)\leq ce^{cKT} \left( \|Rm\|_{L^p(B)}^p (0)
+ K^p(1+A^{-2p}) Vol_{g(t)}(B) \right),$$ 
where $B' = B_{\frac{A}{2\sqrt{K}}} (x_0,0)$ and $B=B_{\frac{A}{\sqrt{K}}} (x_0,0)$. 
\end{Prop}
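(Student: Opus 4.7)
My plan is to prove Proposition \ref{KMWProp1} through a weighted energy estimate coupled with Gr\"onwall's inequality, mimicking the classical Moser-type parabolic $L^{p}$ argument. The starting point is Hamilton's evolution equation for the Riemann tensor in an Uhlenbeck frame, $\partial_{t}Rm=\Delta Rm+Rm\ast Rm$, which together with a Kato-type inequality yields the pointwise differential inequality
\begin{equation*}
\partial_{t}|Rm|^{2}\leq \Delta |Rm|^{2}-2|\nabla Rm|^{2}+C_{n}|Rm|^{3}.
\end{equation*}

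Next I would choose a smooth spatial cutoff $\eta$ with $\eta\equiv 1$ on $B'$, $\operatorname{supp}\eta\subset B$, and $|\nabla\eta|_{g(0)}\leq C\sqrt{K}/A$. Because $|Ric|\leq K$ on $B\times[0,T]$, the metrics $g(t)$ and $g(0)$ are uniformly equivalent on $B$ up to a factor $e^{O(KT)}$, so the cutoff remains effective in $g(t)$ and the volume form satisfies $|\partial_{t}dv_{g(t)}|\leq nK\,dv_{g(t)}$ by the identity $\partial_{t}dv=-R\,dv$. I would then differentiate $F(t):=\int_{M}|Rm|^{p}\eta^{2p}\,dv_{g(t)}$ in time, substitute the pointwise inequality above, and perform an integration by parts on the Laplacian term to distribute derivatives between $|Rm|^{(p-2)/2}Rm$ and $\eta^{2p}$. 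A careful application of Young's inequality is supposed to absorb the resulting $|\nabla Rm|^{2}$ factors against the negative gradient term, producing only an admissible error of the shape $\int |\nabla\eta|^{2}\eta^{2p-2}|Rm|^{p}\,dv$, which is controlled by $(K/A^{2})\int\eta^{2p-2}|Rm|^{p}dv$ by the cutoff bound.

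The crucial obstacle will be the cubic term $C_{n}\int |Rm|^{p+1}\eta^{2p}\,dv$. Since only $|Ric|\leq K$ is available, this cannot be absorbed into $K\cdot|Rm|^{p}$ pointwise. The trick I would follow is to split $|Rm|^{p+1}=|Rm|^{p}\cdot|Rm|$, separate the Ricci and Weyl-type contributions inside the cubic reaction $Rm\ast Rm$, and rely on the algebraic observation that every cubic term in Hamilton's formula either (i) contains an explicit $Ric$ factor, which yields $\leq K|Rm|^{2}$ and therefore contributes only $C_{n}K\,|Rm|^{p}$, or (ii) can be rewritten (after integration by parts in space) using a derivative that can be handled by Young's inequality against the $|\nabla Rm|^{2}$ reservoir with an $\varepsilon$-small constant. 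Any leftover cross term of the form $|Rm|^{p}\cdot|\nabla\eta|\cdot|\nabla Rm|$ arising from these integrations by parts is absorbed analogously, at the price of constants depending on $p$ and of an extra forcing term of order $K^{p}\,(1+A^{-2p})\,\mathrm{Vol}_{g(t)}(B)$ coming from the pure cutoff boundary pieces.

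Once the differential inequality has been reduced to the form
\begin{equation*}
\frac{d}{dt}F(t)\leq c\,K\,F(t)+c\,K^{p}\bigl(1+A^{-2p}\bigr)\,\mathrm{Vol}_{g(t)}(B),
\end{equation*}
Gr\"onwall's lemma integrates this from $0$ to $t\leq T$ and yields exactly the asserted bound $F(t)\leq c\,e^{cKT}\bigl(\|Rm\|^{p}_{L^{p}(B)}(0)+K^{p}(1+A^{-2p})\mathrm{Vol}_{g(t)}(B)\bigr)$. The expected main difficulty is step (ii) above: extracting the correct algebraic structure so that the formally dangerous cubic term $\int|Rm|^{p+1}\eta^{2p}\,dv$ never truly appears on the right-hand side. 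This is precisely the technical innovation of \cite{KotschwarMunteanuWang15}, and any sketch avoiding it reduces to a circular argument requiring an a priori bound on $|Rm|$.
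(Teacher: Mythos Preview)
Note first that the paper does not prove this proposition: it is quoted from \cite[Proposition~1]{KotschwarMunteanuWang15} and used as a black-box input to Theorem~\ref{KMW}. There is thus no proof in the present paper for your sketch to be compared with.

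On the sketch itself: you correctly isolate the cubic integral $\int\eta^{2p}|Rm|^{p+1}\,dv$ as the essential obstacle, and you are right that its treatment is the innovation of \cite{KotschwarMunteanuWang15}. However, the dichotomy (i)/(ii) you propose does not fit Hamilton's reaction term as written. In the Uhlenbeck frame the quadratic $Q(Rm)$ consists entirely of $B$-type products $R_{apbq}R_{cpdq}$ with neither an explicit Ricci factor nor any derivative available for integration by parts, so neither (i) nor (ii) applies to those terms directly. What Kotschwar--Munteanu--Wang actually do is bypass the $(\partial_t-\Delta)Rm=Q(Rm)$ form altogether and work from the raw variational formula
\[
\partial_{t}R_{ijkl}=\nabla_{i}\nabla_{k}R_{jl}-\nabla_{i}\nabla_{l}R_{jk}-\nabla_{j}\nabla_{k}R_{il}+\nabla_{j}\nabla_{l}R_{ik}+Rm\ast Ric,
\]
in which the only undifferentiated curvature product is already $Rm\ast Ric$, hence of your type (i). The $\nabla^{2}Ric$ terms are then integrated by parts, and the contracted second Bianchi identity $\nabla^{i}R_{ijkl}=\nabla_{k}R_{jl}-\nabla_{l}R_{jk}$ is used to convert every divergence of $Rm$ that arises into $\nabla Ric$; a further integration by parts against the pointwise bound $|Ric|\leq K$ then closes the Gr\"onwall inequality. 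Your high-level picture is therefore right, but step~(ii) should be understood as shorthand for this specific combination of the variational evolution formula with the contracted Bianchi identity, not as a generic integration-by-parts manoeuvre, which by itself would run into exactly the circularity you warn about in your last sentence.
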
 
By using this proposition, Moser's technique and its generalized version for varying metrics (cf. \cite[Ch. 19]{Li12}, \cite{Yang8788, Ye08} or \cite[Theorem 2.1]{DaiWeiYe96}), Kotschwar-Munteanu-Wang derived the following bound:
$$ |Rm|(x_0,T)\leq ce^{c(KT+A)} \left(  1+ \left(\frac{\Lambda_0}{K}\right)^\alpha +\left( \frac{1}{KT} +A^{-2} \right)^\beta  \right)(K(1+A^{-2}) +\Lambda_0 ).$$
Here $\alpha,\beta,c$ are constants depending only on $n$ and $ \Lambda_0= \sup_{B_{\frac{A}{\sqrt{K}}} (x_0,0)} |Rm|$.

To compare their approach with ours, we prove the following theorem based on Kotschwar-Munteanu-Wang's argument. 
\begin{Thm*KMW} 
Let $(M^n,g(t))_{t\in[0,T]}$ be a smooth solution of the Ricci flow. 
If $$|Ric|\leq K  \mbox{ in } \overline{\mathcal{P}(4r;x_0,T)} \ \mbox{ and }\ 
\inf_{B_{4r} \left( x_0,0 \right)} inj\geq I>0$$
for some $K,I,r > 0$,
then there exists $C$ depending on $K,I,r,T$ and the dimension $n$ such that 
$|Rm|(x_0,t)\leq C$ in $\mathcal{P}(r;x_0,T)\setminus \mathcal{P}(r;x_0,\frac{1}{2}T)$.
\end{Thm*KMW}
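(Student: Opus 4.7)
My plan is to combine Anderson's harmonic-coordinate construction (recalled in Section 2) with Proposition \ref{KMWProp1} and parabolic Moser iteration, in three steps: an initial $L^p$-bound on $Rm$ at $t=0$; forward propagation of this $L^p$-bound via the cited proposition; and a final upgrade to a pointwise $L^\infty$-bound via parabolic Moser iteration.

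For the first step, at $t=0$ the hypotheses $|Ric|\leq K$ and $inj\geq I$ on $B_{4r}(x_0,0)$ allow me to invoke Anderson's lemma: there are harmonic coordinate charts of radius $\epsilon I$ (with $\epsilon=\epsilon(K,n)$) covering $B_{3r}(x_0,0)$, in each of which $g$ carries a uniform $C^{1,\sigma}$-bound. Applying the interior $W^{2,p}$ Calder\'on--Zygmund estimate to the equation $\Delta g_{ij}=-2R_{ij}+P(g,\partial g)$, whose right-hand side is $L^\infty$-bounded, yields a uniform $W^{2,p}$-bound on $g$ in each chart for every finite $p$. Patching finitely many such charts then gives the initial bound
\begin{equation*}
\|Rm\|_{L^p(B_{3r}(x_0,0))}(0)\leq M_p,\qquad M_p=M_p(n,p,K,I,r).
\end{equation*}
For the second step, Proposition \ref{KMWProp1} with $A:=3r\sqrt{K}$, combined with the volume comparison $Vol_{g(t)}\leq e^{nKT}Vol_{g(0)}$ on the fixed topological ball $B_{3r}(x_0,0)$ (a consequence of $|Ric|\leq K$), yields for each fixed $p\geq 3$ and every $t\in[0,T]$ a uniform $L^p$-bound
\begin{equation*}
\|Rm\|_{L^p(B_{3r/2}(x_0,0))}(t)\leq N_p=N_p(n,p,K,I,r,T).
\end{equation*}

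The third step will exploit the Ricci-flow evolution inequality $(\partial_t-\Delta)|Rm|^2\leq c_n|Rm|^3$. Under the Ricci-curvature bound---which provides uniform-in-$t$ Sobolev and volume-comparison constants---I will invoke parabolic Moser iteration as in \cite{Yang8788, Ye08}, \cite[Theorem 2.1]{DaiWeiYe96}, \cite[Ch.~19]{Li12}, and \cite{KotschwarMunteanuWang15} to upgrade $N_p$ into a pointwise $L^\infty$-bound on any parabolic cylinder contained in $\mathcal{P}(r;x_0,T)\setminus\mathcal{P}(r;x_0,T/2)$, giving the claimed $C=C(n,K,I,r,T)$. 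I expect the principal technical obstacle to be Step 1: a pointwise bound on $|Rm|$ at $t=0$ is not accessible from only an $L^\infty$-bound on $Ric$ (it would require, e.g., $Ric\in C^{0,\sigma}$ or $|\nabla Ric|$ bounded), so one must settle for the $L^p$-bound and rely on Moser iteration to recover pointwise control for $t>T/2$; this is precisely why the conclusion excludes a neighborhood of $t=0$. Choosing $p$ sufficiently large (depending on $n$) then absorbs the cubic nonlinearity $|Rm|^3$ in the iteration in the standard H\"older-inequality manner.
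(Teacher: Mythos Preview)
Your proposal is correct and follows essentially the same route as the paper's own proof: Anderson's harmonic coordinates plus $L^p$ elliptic regularity for the initial $\|Rm\|_{L^p}$ bound, Proposition~\ref{KMWProp1} to propagate it forward, and Moser iteration for the pointwise bound on $[T/2,T]$. Two small points the paper makes explicit that you leave implicit: the number of harmonic-coordinate charts needed to cover $B_{2r}(x_0,0)$ is bounded via Gromov's packing lemma (so the constant depends only on $n,K,I,r$), and the Moser step requires a \emph{lower} bound on $Vol_{g(t)}(B_r)$, which the paper obtains from Berger--Croke's inequality at $t=0$ together with the volume-form evolution $\partial_t\,dVol=-R\,dVol$; your upper volume bound alone does not suffice here.
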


\begin{proof}
For any point $y\in B_{2r}(x_0,0)$, consider the harmonic coordinates around it.
Since Ricci curvature is bounded and the injectivity radius is bounded from below,
these coordinates cover a geodesic ball $B_{\epsilon I}(y,0)$ for some $\epsilon$ depending on $K,I$ and $n$.
Since $\|Ric\|_{L^p}$ is bounded, elliptic regularity of the equation $\Delta_{g} g_{ij}=-2 R_{ij} +P(g,\partial g)$ shows that 
$\|Rm\|_{L^p(B_{\epsilon I}(y,0))}$ is bounded by some constant $C$ for all $p >1$ (cf. Section 2 or \cite{Anderson90}). 
Furthermore, a standard result due to Gromov says that the lower bound of Ricci curvature implies that $B_{2r}(x_0,0)$ 
can be covered by a finite collection of $B_{\epsilon I}(y_i,0)$'s, say $i=1,\dots,N$. 
Note that $N$ only depends on $n,\epsilon I,r$ and $K$.
Hence $\|Rm\|_{L^p(B_{2r}(x_0,0))}$ must be bounded by some constant $C_L=C_L(n,K,I,r,p)$.

By Proposition \ref{KMWProp1}, when $p\geq 3$, the evolving $L^p$-norm of $|Rm|$ on $B_{r}(x_0,0)$ is controlled by
$$\|Rm\|_{L^p(B_r)}^p (t)\leq ce^{cKT} \left( \|Rm\|_{L^p(B_{2r})}^p (0)
+ K^p(1+(\epsilon I\sqrt{K})^{-2p}) Vol_{g(t)}(B_{2r}) \right),$$ 
where $B_r= B_{r} (x_0,0)$ and $B=B_{2r} (x_0,0)$. 
Divide both sides by $Vol_{g(t)}(B_r)$ and use $\|Rm\|_{L^p(B_{2r})}^p (0)\leq C_L^p$, one obtains
$$\frac{1}{Vol_{g(t)}(B_r)} \|Rm\|_{L^p(B_r)}^p (t)
\leq \frac{ce^{cKT}C_L^p }{Vol_{g(t)}(B_r)}+ ce^{cKT} (K^p+(\epsilon I)^{-2p}) \cdot\frac{Vol_{g(t)}(B_{2r}) }{Vol_{g(t)}(B_r)}.$$
To find a lower bound for $Vol_{g(t)}(B_r)$, we need Berger-Croke's theorem (cf. \cite[Proposition 14]{Croke80}):
there is a uniform constant $C_{inj}$ which depends only on $n$ such that, for any Riemannian metric and any $r>0$, 
$\displaystyle \inf_{B_{2r}} inj\geq I$ implies that $Vol(B_r)\geq C_{inj} I^n$. Thus
\begin{align*}
\frac{1}{Vol_{g(t)}(B_r)} \|Rm\|_{L^p(B')}^p (t)
& \leq ce^{cKT}C_L^p C_{inj}I^n+ ce^{cKT} (K^p+(\epsilon I)^{-2p}) \cdot\frac{Vol_{g(t)}(B_{2r}) }{Vol_{g(t)}(B_r)}.
\end{align*}
Since $\frac{\partial}{\partial t} dVol=-R\cdot dVol$ along the Ricci flow, 
the volume ratio which appears in the last term can be estimated by
$$\frac{Vol_{g(t)}(B_{2r}) }{Vol_{g(t)}(B_r)} \leq e^{2KT}\frac{Vol_{g(0)}(B) }{Vol_{g(0)}(B')} \leq  e^{2KT}\cdot ce^K $$ 
where the last inequality comes from Bishop-Gromov volume comparison theorem.
Therefore, $\frac{1}{Vol_{g(t)}(B_r)}\|Rm\|_{L^p(B_r)}^p (t)\leq ce^{cKT}$ and we may apply Moser's iteration and derive a similar curvature bound as in \cite[pp. 2620-2623]{KotschwarMunteanuWang15}. 
\end{proof}

As one can observe easily, Theorem \ref{KMW} only involves the initial lower bound of $inj$, while Theorem \ref{MTLh}
involves $inj_{g(t)}$ for $t\neq 0$. 
On the other hand, the bounds in Theorem \ref{MTL} and \ref{MTLh} are much better than the one in Theorem \ref{KMW}. 

\begin{Rmk}
After checking the argument carefully, one can see that our geometrical blow-up method is valid for general geometric flow $\frac{\partial}{\partial t} g= \mathcal{R}$ possessing Shi's property, where $\mathcal{R}$ is a symmetric two-tensor defining by Ricci curvature and $g$. 
Indeed, if such a flow $\frac{\partial}{\partial t} g= \mathcal{R}$ satisfies $\frac{\partial}{\partial t} |Rm| \leq \Delta |Rm| + C |Rm|^2$, 
then its curvature can be controlled by Ricci curvature and the injectivity radius in the sense of Theorem \ref{MTL} and \ref{MTLh}. 
This shows that our approach is somewhat an alternative argument to Moser's iteration. 
Note that one more advantage of our approach is that one actually obtains a $C^{0,\sigma}$-bound, not only an $L^\infty$-bound. 
\end{Rmk}

\section{Applications}

\subsection{Compactness of the Ricci flow}
It has been known for decades that a sequence of closed connected Riemannian $n$-manifolds $\{(M_k,g_k)\}_{k\in\mathbb{N}}$ with bounded curvature, bounded diameter and volume bounded from below by a positive constant must contain a subsequence which converges in $C^{1,\alpha}$-topology to a Riemannian $n$-manifold $(M_\infty,g_\infty)$ (cf. \cite{GreeneWu88}).
Thus we say the collection 
$$\mbox{LV}:= \{  (M^n,g,L,V,D) \ |\ \  |Rm|\leq L, Vol\geq V \mbox{ and } diam\leq D\}$$ 
is pre-compact for any given real numbers $L\geq 0,V>0$ and $D>0$.
In \cite{Gao90}, L. Z. Gao showed that the same conclusion holds when the condition $|Rm|\leq L$ is replaced by $|Ric|\leq K$ and a certain integral bound of $|Rm|$. On the other hand, Anderson \cite{Anderson90} showed that, if $inj:=\inf_{M} inj(x)\geq I>0$, then $|Rm|\leq L$ can be replaced by merely $|Ric|\leq K$. That is, the set 
$$\mbox{KI}:= \{  (M^n,g,K,I,D)\ |\ \  |Ric|\leq K, inj\geq I \mbox{ and } diam\leq D \}$$ 
is also pre-compact for any given real numbers $K\geq 0,I>0$ and $D>0$. As observed by Cheng-Li-Yau \cite{ChengLiYau81}, and Cheeger-Gromov-Taylor \cite{CheegerGromovTaylor82} independently, the condition of injectivity radius can be derived from bounds of $|Rm|$ and $Vol$, thus LV-condition implies KI-condition. Note that the inverse is very likely not true although we do not notice any constructed counter-example in the limited literatures we have surveyed. In particular, one seems not able to improve the convergency from $C^{1,\alpha}$ to $C^2$ by using merely the KI-conditions.

Such convergence theory plays an important role in the study of singularities of the Ricci flow.
Indeed, given a singular portion of the flow, one can blow up the solution around it and characterize the singularity by using the limit of these rescaling solutions. Thus we need the compactness theorem derived by R. S. Hamilton in \cite{Hamilton95_2} to ensure the existence of such limiting solution. A particular version of Hamilton's theorem says that if a sequence of marked complete solutions of the Ricci flow $\{(M_k,g_k(t),x_k)_{t\in[0,T]}\}_{k\in\mathbb{N}}$ satisfies $|Rm|_{g_k}(x,t)\leq L$ for all $x,t$ and $Vol_{g_k(0)}(B_r(x_k))\geq V$ for some $r>0$, then there exists a subsequence converging in $C_{\rm{loc}}^{\infty}$-topology to a marked complete solution of the Ricci flow $(M_\infty,g_\infty(t),x_\infty)_{t\in(0,T]}$. 
The smooth convergency is due to Shi's estimate, which says that all higher order derivatives of $Rm$ are bounded provided that $Rm$ is bounded along the Ricci flow. Hence Hamilton's theorem can be seen as a LV-compactness theorem for the Ricci flow. 
By using Theorems \ref{MTLh} and \ref{KMW}, one can derive a $C^\infty$ KI-compactness theorem without assuming any bound on the curvature operator.  

\begin{Cor}\label{cpt}
Let $(M_k,g_k(t),x_k)_{t\in[0,T]}$ be a sequence of marked complete solutions of the Ricci flow. 
Suppose there are constants $K,\delta,I$ such that
$|Ric|_{g_k}\leq K \mbox{ on }M_k\times[0,T]$ and
$$\inf_{M_k\times[0,T]}inj\geq \delta \sqrt{t} \ \  \mbox{ or }\ \ 
\inf_{M_k\times\{0\}}inj\geq I>0\ \ \mbox{ for all } k,$$
then there exist a subsequence $S_j:=(M_j,g_j(t),x_j)$ and a solution $S_\infty:=(M_\infty,g_\infty(t),x_\infty)$ of the Ricci flow over $t\in(0,T]$ such that $S_j$ converges in $C^\infty$-topology to $S_\infty$ on every time interval $[\epsilon,T]$ with $\epsilon>0$ as $j\to\infty$.
\end{Cor}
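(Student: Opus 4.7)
The plan is to reduce the conclusion to Hamilton's $C^\infty$ compactness theorem for pointed Ricci flows by first producing, from the Ricci-curvature hypothesis alone, uniform interior bounds on the full curvature operator on parabolic neighbourhoods of the basepoints $x_k$; Shi's estimates will then upgrade these to uniform bounds on every covariant derivative, and the given injectivity radius hypothesis will supply the remaining non-collapsing input at the basepoint.

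In the first case, where $inj \geq \delta \sqrt{t}$ throughout $M_k \times [0, T]$, I would invoke Theorem \ref{MTLh} with $h(t) = t$ and $m = 2$. The hypothesis $inj \geq \delta\min\{K^{-1/2}, \sqrt{t}\}$ is automatic, since $\delta\sqrt{t} \geq \delta K^{-1/2}$ whenever $t \geq K^{-1}$ and $\delta\sqrt{t} = \delta\min\{K^{-1/2}, \sqrt{t}\}$ otherwise. The conclusion yields
$$|Rm|_{g_k}(x, t) \leq C\,K\,T/t \quad \text{on } B_{\rho\sqrt{K^{-1}T^{-1}t}}(x_k, t),$$
uniformly in $k$. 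Restricting to $t \in [\epsilon, T]$ produces a $k$-uniform curvature bound on a ball of fixed radius $\rho\sqrt{K^{-1}T^{-1}\epsilon}$ about $x_k$ at every such time. In the second case, $inj \geq I$ at $t = 0$, I would apply Theorem \ref{KMW} successively with $T' = T, T/2, T/4, \ldots$, so that the resulting $L^\infty$ bounds on $|Rm|_{g_k}$ over $\mathcal{P}(r; x_k, T') \setminus \mathcal{P}(r; x_k, T'/2)$ tile $[\epsilon, T]$ by finitely many windows.

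Once $|Rm|_{g_k}$ is controlled on a definite parabolic neighbourhood of $x_k$ over $t \in [\epsilon, T]$, the local form of Shi's estimate recalled in the introduction gives uniform bounds on every derivative $|\nabla^\ell Rm|_{g_k}$ on a slightly smaller neighbourhood. At any fixed $t_0 \in (0, T]$ the injectivity radius at $x_k$ is uniformly bounded below: directly by $\delta\sqrt{t_0}$ in the first case, and in the second by combining $inj \geq I$ at $t = 0$ with the uniform metric equivalence $e^{-2Kt_0} g_k(0) \leq g_k(t_0) \leq e^{2Kt_0} g_k(0)$ forced by $|Ric| \leq K$ (and a Cheeger--Gromov--Taylor type transfer under bounded curvature). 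Hamilton's compactness theorem then produces a subsequence converging in $C^\infty_{\mathrm{loc}}$ on every slab $M_\infty \times [\epsilon, T]$ to a complete pointed Ricci flow, and a diagonal extraction as $\epsilon \searrow 0$ furnishes the limit on $(0, T]$ asserted in the corollary.

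The step I expect to require the most care is Case A: Theorem \ref{MTLh} provides a curvature bound only on a ball whose radius $\rho\sqrt{K^{-1}T^{-1}t}$ shrinks as $t \to 0$, whereas Hamilton's theorem expects bounds on a spatial neighbourhood of the basepoint that is fixed in $t$. The remedy is to freeze $\epsilon > 0$ at the outset of each diagonal step, use the uniform ball $B_{\rho\sqrt{K^{-1}T^{-1}\epsilon}}(x_k, t)$ for all $t \in [\epsilon, T]$, and pass between $g_k(t)$-geodesic balls at different times via the Ricci bound. Treating the two injectivity hypotheses in parallel through this common Shi-and-Hamilton conclusion then delivers the corollary.
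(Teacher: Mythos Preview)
Your proposal is correct and follows exactly the route the paper indicates (the paper offers no detailed proof, only the sentence ``By using Theorems \ref{MTLh} and \ref{KMW}, one can derive a $C^\infty$ KI-compactness theorem\ldots'' before stating the corollary). One simplification you overlook: since the hypotheses $|Ric|\le K$ and the injectivity-radius lower bound hold on \emph{all} of $M_k\times[0,T]$, not just on a ball about $x_k$, you may apply Theorem~\ref{MTLh} (in Case~A) and Theorem~\ref{KMW} (in Case~B) centred at \emph{every} point of $M_k$, yielding the global bound $|Rm|_{g_k}\le CKT/t$ on $M_k\times(0,T]$ (resp.\ a $k$-uniform constant on $M_k\times[\epsilon,T]$). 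This feeds directly into Hamilton's compactness theorem in its standard global form and dissolves the shrinking-radius issue you flag in your last paragraph.
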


\subsection{Ricci soliton}
Ricci solitons are manifolds $(M,g)$ coupled with a smooth vector field $X$, which can generate self-similar solutions to the Ricci flow. 
Indeed, if a Riemannian manifold $(M, g)$ satisfies $Ric_g+\frac{1}{2}L_Xg=\lambda g$ for some $X$, then $g(t)=\rho(t)\varphi_t^*g(0)$ solves the Ricci flow, where $\rho(t)=1-2\lambda t$ and $\varphi_t:M\to M$ is the one parameter family of diffeomorphisms generated by $\rho X$ (cf. \cite[Chapter 1]{ChowPart1}). Moreover, if $X=\nabla f$ for some smooth function $f:M\to\mathbb{R}$, then the soliton is called a gradient Ricci soliton.

Curvature growth is an important issue for the study of gradient Ricci solitons. Some classification results are built on the growth assumptions and, on the other hand, people expect that curvature of solitons should obey certain natural growth/decay laws. 
In \cite{MunteanuMTWang11}, Munteanu and M.-T. Wang proved that every shrinking gradient Ricci soliton with bounded Ricci curvature must have a polynomial bound of its curvature. The following theorem shows that, if the injectivity radius is bounded from below, then the curvature can be uniformly bounded by a constant.
 
\begin{Thm*bounded} 
Given any $\lambda \in \{ \pm\frac{1}{2}, 0\}$,
all $n$-dimensional Ricci solitons $Ric+\frac{1}{2}L_Xg=\lambda g$ with $|Ric|\leq K$ and $inj\geq I>0$ have the same curvature bound. 
\end{Thm*bounded}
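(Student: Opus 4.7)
The plan is to exploit self-similarity: the soliton generates a Ricci flow $g(t) = \rho(t)\varphi_t^* g$ with $\rho(t) = 1 - 2\lambda t$ and $\varphi_t$ a one-parameter family of diffeomorphisms of $M$. I will apply Theorem \ref{MTL} to $g(t)$ at a positive time and then pull back through $\varphi_t$ to bound $|Rm|$ of $g = g(0)$. Fix a time horizon $T$ on which $\rho$ is bounded above and away from $0$: take $T = 1/2$ for shrinking solitons and any $T > 0$ for steady or expanding ones. Naturality of $Ric$, $inj$ and $Rm$ under pullback and conformal rescaling gives
\[
|Ric|_{g(t)}(x) = \rho(t)^{-1}|Ric|_g(\varphi_t(x)),\ \ inj_{g(t)}(x) = \sqrt{\rho(t)}\, inj_g(\varphi_t(x)),\ \ |Rm|_{g(t)}(x) = \rho(t)^{-1}|Rm|_g(\varphi_t(x)),
\]
so $|Ric|_{g(t)} \leq K^{*} := K/\inf_{[0,T]}\rho$ and $inj_{g(t)} \geq I^{*} := I\sqrt{\inf_{[0,T]}\rho}$ uniformly on $M \times [0, T]$, with $K^{*}, I^{*}$ depending only on $K, I, \lambda$.

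The main obstacle is matching the assumption $inj \geq \delta K^{-1/2}$ of Theorem \ref{MTL}: the product $inj\cdot\sqrt{|Ric|_{\infty}}$ is scale-invariant, so if $I^{*}\sqrt{K^{*}} < \delta$ no rescaling can bridge the gap. I would circumvent this by simply weakening the Ricci bound: replace $K^{*}$ by $\tilde K := \max\{K^{*}, (\delta/I^{*})^2, \eta/T\}$, where $\delta, \eta$ are the fixed universal parameters feeding Theorem \ref{MTL}. Then simultaneously $|Ric|_{g(t)} \leq \tilde K$, $inj_{g(t)} \geq \delta \tilde K^{-1/2}$, and $T \geq \eta/\tilde K$, so all hypotheses of Theorem \ref{MTL} hold on every parabolic neighborhood $B_{4\sqrt T}(x_0, t)\times[0,T]$ with $x_0 \in M$ arbitrary, thanks to the soliton's uniform global bounds.

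Applying Theorem \ref{MTL} at such an arbitrary $x_0$ yields $|Rm|_{g(t)}(x_0) \leq C\tilde K T/t$ for every $t \in (0, T]$. Taking $t = T$ gives $|Rm|_{g(T)}(x_0) \leq C\tilde K$, and varying $x_0$ this becomes a global pointwise bound on $M$ at time $T$. Finally, the self-similarity identity $|Rm|_g(y) = \rho(T)|Rm|_{g(T)}(\varphi_T(y))$ propagates the bound back to time $0$: since $\varphi_T$ is a diffeomorphism of $M$, we obtain $|Rm|_g \leq \rho(T)\, C\, \tilde K$ pointwise on $M$. The resulting constant depends only on $n, K, I$ and $\lambda$, uniformly over shrinking, steady and expanding solitons, which is exactly the claimed common curvature bound.
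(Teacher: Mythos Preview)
Your proof is correct and follows essentially the same route as the paper: evolve the soliton by its self-similar Ricci flow, apply the main curvature estimate (the paper invokes Theorem~\ref{MTLh}, you use Theorem~\ref{MTL}) at a fixed positive time, and transfer the bound back to $t=0$ via the scaling identity. Your explicit enlargement $\tilde K := \max\{K^{*}, (\delta/I^{*})^2, \eta/T\}$ to force the hypotheses $inj\ge\delta\tilde K^{-1/2}$ and $T\ge\eta/\tilde K$ is a clean detail that the paper leaves implicit; note only the harmless slip that the pull-back identity should read $|Rm|_g(y)=\rho(T)\,|Rm|_{g(T)}(\varphi_T^{-1}(y))$, which does not affect the global bound since $\varphi_T$ is a diffeomorphism.
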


\begin{proof}
Consider a self-similar solution generated by the soliton on a time interval $[0,t^*]$ for some $t^*>0$. Since the soliton changes only up to a scaling factor along the flow (modulo by diffeomorphisms), so the bounds of curvature and injectivity radius are changing according to the scaling factor. Indeed, $Ric_{g(t)}(x)=\rho^{-1}(t)Ric_{g(0)}(\varphi_t(x))$ and $inj_{g(t)}(x)=\rho^{\frac{1}{2}}(t)inj_{g(0)}(\varphi_t(x))$, where $\rho(t)=1-2\lambda t$, on the self-similar solution. 
By applying Theorem 2, we know that $|\nabla Ric|$ is uniformly bounded at $t=t^*$. That means $|Rm|\leq C$ at $t=t^*$, where $C$ depends on $K,I,t^*,\lambda$ and $n$. Because $g(0)$ differs to $g(t^*)$ only by a scaling factor, we have $|Rm|\leq C$ at $t=0$. 
\end{proof}

Based on this curvature estimate, one can further ask for compactness result.

\begin{Cor*GRS}
Let $\lambda\in \{\pm \frac{1}{2}, 0 \}$ and $A\in\mathbb{R}$. 
For any sequence of gradient Ricci solitons $(M_k,g_k,f_k,p_k)_{k\in\mathbb{N}}$ satisfying $Ric_{g_k}+Hess(f_k) = \lambda g_k$, if  
$$|Ric|_{g_k}\leq K,\ \ inj_{g_k}\geq I>0 \ \ \mbox{ and }\ \  |\nabla f|_{g_k}(p_k)\leq A,$$ 
then there exists a subsequence converging smoothly to $(M_\infty,g_\infty,f_\infty,p_\infty)$, which satisfies $Ric_{g_\infty}+Hess(f_\infty)= \lambda g_\infty$ with $f_\infty=\lim_{k\to\infty}f_k$.
\end{Cor*GRS}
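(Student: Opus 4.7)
The plan is to reduce the statement to a Cheeger-Gromov-type compactness theorem for the Riemannian manifolds by first upgrading the Ricci bound to a full curvature bound via Theorem \ref{bounded}, and then to produce the limit potential function from the soliton equation itself, using the anchoring hypothesis $|\nabla f_k|_{g_k}(p_k)\leq A$ to control the integration constants.

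First I would apply Theorem \ref{bounded} to each soliton $(M_k,g_k,f_k)$ to obtain a uniform bound $|Rm|_{g_k}\leq C_0$ with $C_0=C_0(n,\lambda,K,I)$. Via Shi's derivative estimates, applied to the self-similar Ricci flow generated by each soliton exactly as in the proof of Theorem \ref{bounded}, one further gets bounds $|\nabla^{\ell} Rm|_{g_k}\leq C_\ell$ for every $\ell$. Since in addition $inj_{g_k}(p_k)\geq I>0$, Hamilton's pointed $C^\infty$ compactness theorem yields a subsequence (still indexed by $k$) and a complete pointed Riemannian manifold $(M_\infty,g_\infty,p_\infty)$ such that $(M_k,g_k,p_k)\to (M_\infty,g_\infty,p_\infty)$ in the $C^\infty$ Cheeger-Gromov sense, with $|Rm|_{g_\infty}\leq C_0$ and $inj_{g_\infty}\geq I$.

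Second, I would upgrade this to convergence of the potentials. Rewriting the soliton equation as $Hess(f_k)=\lambda g_k-Ric_{g_k}$ yields a global bound $|Hess(f_k)|_{g_k}\leq n|\lambda|+K$. Integrating this along minimizing geodesics emanating from $p_k$ and using $|\nabla f_k|_{g_k}(p_k)\leq A$ gives
$$|\nabla f_k|_{g_k}(x)\leq A+(n|\lambda|+K)\,d_{g_k}(x,p_k),$$
and one further integration yields a quadratic bound on the normalized potentials $\tilde f_k := f_k - f_k(p_k)$ on every fixed-radius ball around $p_k$. Differentiating the soliton equation $\ell$ times and invoking the Shi-type bounds on $|\nabla^\ell Ric_{g_k}|$ gives uniform $C^\ell_{loc}$ estimates for $\tilde f_k$. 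Pulling the $\tilde f_k$ back by the diffeomorphisms realizing the Cheeger-Gromov convergence of $g_k\to g_\infty$ and applying Arzela-Ascoli together with a diagonal argument produces a further subsequence along which $\tilde f_k\to f_\infty$ in $C^\infty_{loc}$. Passing to the limit in $Ric_{g_k}+Hess(\tilde f_k)=\lambda g_k$ identifies $(M_\infty,g_\infty,f_\infty,p_\infty)$ as a gradient Ricci soliton with the same $\lambda$.

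The main obstacle I expect is purely interpretive rather than technical: the statement writes the limit as $f_\infty=\lim_{k\to\infty}f_k$, but nothing in the hypotheses controls the scalars $f_k(p_k)$, so the literal sequence need not converge anywhere. The proof actually produces the limit of $\tilde f_k = f_k - f_k(p_k)$, which is harmless because the soliton equation depends on $f$ only through $Hess(f)$ and so is insensitive to additive constants. The hypothesis $|\nabla f_k|_{g_k}(p_k)\leq A$ is precisely what is needed to anchor the normalized sequence in $C^1$ at the basepoint; without it the Hessian bound would no longer integrate to locally uniform gradient and $C^0$ bounds, and the extraction step could fail.
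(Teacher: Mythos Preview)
Your proposal is correct and follows essentially the same route as the paper: invoke Theorem~\ref{bounded} (via the self-similar flow and Shi's estimates) to get uniform bounds on $|Rm|$ and all its derivatives, apply Hamilton's pointed compactness, then normalize $f_k$ by subtracting $f_k(p_k)$ and use the Hessian bound $|Hess(f_k)|\leq n|\lambda|+K$ together with the anchor $|\nabla f_k|(p_k)\leq A$ to obtain linear growth of $|\nabla f_k|$, quadratic growth of $\tilde f_k$, and higher $C^\ell$ bounds by differentiating the soliton equation. Your interpretive remark about $f_\infty=\lim f_k$ versus $\lim \tilde f_k$ is well taken and matches exactly what the paper does implicitly when it ``normalize[s] $f$ by subtraction such that $f(p_k)=0$''.
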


\begin{proof}
As in the proof of previous corollary, we can evolve these solitons to some time $t^*>0$ and obtain uniform bounds for $Rm$ and injectivity radius for all $t\in[0,t^*]$. Moreover, all the derivatives of $Rm$ are bounded at $t=t^*$ by Shi's estimate. 
Hence, all the curvatures and their derivatives are bounded at $t=0$ and thus there exists a subsequential limit 
$(M_\infty,g_\infty,p_\infty)$. 

To show the convergence of $f_k$'s, we normalize $f$ by subtraction such that $f(p_k)=0$ and then use $|Hess(f_k)| = |\lambda g_k -Ric_{g_k}| \leq n|\lambda|+K$.
Indeed, the bound of $Hess(f_k)$ shows that $|\nabla f_k|$ grows at most linearly and $|f_k|$ grows at most quadratically from the point $p_k$, where $f_k=0$ and $|\nabla f_k|\leq A$. 
In particular, for any given $r<\infty$, there are uniform bounds for $f_k$'s and $|\nabla f_k|$'s on the geodesic balls $B_{r}(p_k)$. 
So $f_k$'s have a uniform $C^2$-bound for every fixed $r$. 
Moreover, all higher derivatives of $f_k$ depend only on the derivatives of $Ric$ and lower derivatives of $f_k$, thus are uniformly bounded.  
Therefore, there exists a subsequence of $(M_k,g_k,f_k,p_k)$ converging smoothly to a Ricci soliton $(M_\infty,g_\infty,f_\infty,p_\infty)$ satisfying $Ric_{g_\infty}+Hess(f_\infty)= \lambda g_\infty$.
 \end{proof}

\subsection{Curvature taming around the initial time}

In Theorem \ref{MTLh}, if we set $T=\frac{\eta}{K}$, then we obtain an estimate $|Rm|\leq C h^{-1}(t)$ where
$C$ is a constant depending only on $n,\delta$ and $\eta$. Surprisingly, this bound does not involve the bound of Ricci curvature. 
Combining with the fact that $inj_{g(0)}$ is not involved in Theorem \ref{MTLh}, we can prove that 
every Ricci flow is $1$-tamed by a universal constant up to a certain time.

\begin{Thm*taming}
There exists a universal constant $C=C(n)$ such that
for any smooth Ricci flow $(M^n,g(t))$ and any point $x_0\in M$, 
$$|Rm|(x,t)\leq Ct^{-1} \ \mbox{ on } B_{4r}(x_0,t)   \mbox{ for all } t\in (0,K^{-1}],$$ 
and
$$|Rm|(x,t)\leq CK \ \mbox{ on } B_{4r}(x_0,t)   \mbox{ for all } t\in [K^{-1},r^2],$$ 
where $r^2:= \inf \{ \ t>0\ |\  \inf_{B_{8r}(x_0,t)} inj < \sqrt{t} \}$ and $K$ denotes the maximum of $r^{-2}$
and 
$\sup |Ric|$ on $\bigcup_{[0,r^2]}B_{8r}(x_0,t)$.
\end{Thm*taming}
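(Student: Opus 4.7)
The plan is to apply Theorem \ref{MTLh} with $h(t) = t$ (which satisfies the required regularity with $m = 2$) and $\delta = \eta = 1$, in two regimes corresponding to the two stated bounds. Since all parameters are absolute, the constants produced are of the form $C = C(n)$. Throughout, fix an arbitrary base point $y \in B_{4r}(x_0, t_0)$ and attempt to estimate $|Rm|(y, t_0)$.

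\textbf{Regime 1 ($t_0 \in (0, K^{-1}]$).} I apply Theorem \ref{MTLh} centered at $y$ with $T = K^{-1}$ and $h(t) = t$; assuming its hypotheses, the conclusion at $(y, t_0)$ reads
\[
|Rm|(y, t_0) \leq C K T h(t_0)^{-1} = C\, K \cdot K^{-1} \cdot t_0^{-1} = C t_0^{-1}.
\]
To invoke the theorem one must verify $|Ric| \leq K$ and $inj \geq \min\{K^{-1/2}, \sqrt{t}\}$ on the parabolic region $B_{4K^{-1/2}}(y, t)$ for all $t \in [0, K^{-1}]$. Since $K \geq r^{-2}$, the spatial radius $4K^{-1/2} \leq 4r$; provided this ball sits inside $B_{8r}(x_0, t)$, the standing hypotheses of Theorem \ref{taming} supply both bounds (the injectivity bound $inj \geq \sqrt{t}$ trivially dominates $\min\{K^{-1/2}, \sqrt{t}\}$).

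\textbf{Regime 2 ($t_0 \in [K^{-1}, r^2]$).} I pass to the time-translated Ricci flow $g^\flat(\tau) := g(\tau + t_0 - K^{-1})$ on $\tau \in [0, K^{-1}]$, whose final time $\tau = K^{-1}$ recovers the original $t_0$. In $g^\flat$ the injectivity hypothesis is automatic:
\[
inj_{g^\flat(\tau)} = inj_{g(\tau + t_0 - K^{-1})} \geq \sqrt{\tau + t_0 - K^{-1}} \geq \sqrt{\tau},
\]
using $t_0 \geq K^{-1}$. Applying Theorem \ref{MTLh} to $g^\flat$ with the same parameters and evaluating at $\tau = K^{-1}$ gives
\[
|Rm|(y, t_0) = |Rm|_{g^\flat}(y, K^{-1}) \leq C K \cdot K^{-1} \cdot K = CK,
\]
which is the second claimed bound.

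\textbf{Main obstacle.} The one substantive technical point is verifying the ball inclusion $B_{4K^{-1/2}}(y, t) \subseteq B_{8r}(x_0, t)$ uniformly for $t$ in an interval of length $K^{-1}$ around $t_0$. Under $|Ric| \leq K$ one has $e^{-2K|t-t_0|} g(t_0) \leq g(t) \leq e^{2K|t-t_0|} g(t_0)$ on the Ricci-bounded region, so distances distort by at most a factor of $e$ over such an interval. A short continuity-in-$t$ argument, anchored at $t = t_0$ where $d_{t_0}(y, x_0) < 4r$ holds by assumption, then confirms $d_t(y, x_0) + 4K^{-1/2} < 8r$ throughout — possibly after replacing $K^{-1}$ by $cK^{-1}$ for a small absolute constant $c$ absorbed into the final $C = C(n)$. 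This is precisely why the statement leaves a factor of two between the radii $4r$ and $8r$.
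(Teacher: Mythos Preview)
Your proposal is correct and follows essentially the same approach as the paper: both apply Theorem \ref{MTLh} with $h(t)=t$, $\delta=\eta=1$, and $T=K^{-1}$, centered at an arbitrary point $y$ of the $4r$-ball, using that $4\sqrt{T}=4K^{-1/2}\leq 4r$ so the hypotheses on the $8r$-ball suffice; for the second regime both time-translate the flow by an interval of length $K^{-1}$ and read off the bound at the final time. Your treatment is in fact a bit more careful than the paper's in two places: you center at $y\in B_{4r}(x_0,t_0)$ (which matches the conclusion directly) rather than at $y\in B_{4r}(x_0,0)$, and you make the distance-distortion / continuity argument for the ball inclusion explicit, noting that one may need to shrink $T$ to $cK^{-1}$ for an absolute $c$---a harmless adjustment the paper leaves implicit.
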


\begin{proof}
Since the flow is smooth, for any $x_0\in M$, there must exist an $r>0$ by the continuity of $inj$.
That is, $inj\geq \sqrt{t}$ on $\bigcup_{[0,r^2]}B_{8r}(x_0,t)$. 
By the definition of $K$, we have $|Ric|\leq K$ on $\bigcup_{[0,r^2]}B_{8r}(x_0,t)$.
Consider $y\in B_{4r}(x_0,0)$ and apply Theorem \ref{MTLh} with $\delta=\eta=1, T=\frac{1}{K}\leq r^2$ and $h=t$
on the domain $\bigcup_{[0,r^2]}B_{4r}(y,t)$, one obtains $|Rm|(y,t)\leq Ct^{-1}$ for some $C=C(n)$ and $t\in(0,K^{-1}]$.
Since $y$ is an arbitrary point in $B_{4r}(x_0,0)$, $|Rm|(x,t)\leq Ct^{-1}$ on $\bigcup_{(0,K^{-1}]}B_{4r}(x_0,t)$.
For $t\in[K^{-1},r^2]$, one may apply Theorem \ref{MTLh} on $\bigcup_{[t^*, t^*+K^{-1}]}B_{4r}(y,t)$ and 
see that $|Rm|(y,t^*)\leq CK$. Since $t^*$ is arbitrary, the second statement of the theorem is proved.
\end{proof}

\section{An estimate of $\nabla Ric$ without using injectivity radius}

For every Riemannian manifold, the traced second Bianchi identity
$\nabla_jR_{ij}=\frac{1}{2}\partial_iR$ holds.
In view of this, we say that a manifold satisfies {\it strong Bianchi inequality}
if the pointwise norm estimate $|\nabla Ric|\leq \beta|\nabla R|$ holds for some $\beta>0$.
For a solution of the Ricci flow, we consider a weaker condition as follows.
\begin{Def} Let $U$ be an open set of a manifold $M$. 
A solution of the Ricci flow $(U,g(t))_{t\in(0,T]}$ with $|Ric|\leq K$
is said to satisfy the weak Bianchi inequality 
if $$|\nabla Ric|\leq \alpha K t^{\frac{-1}{2}}+\beta|\nabla R|$$
on $U\times (0,T]$ for some constants $\alpha,\beta >0$.
\end{Def}

This inequality means that the trace-free part of $\nabla Ric$ is bounded
by either the traced part or a constant which is allowed to depend on $t^{\frac{-1}{2}}$.
This is not a strong restriction in the sense that it holds on a large class of static manifolds.
More details about this can be found in the next section.

In \cite{Shi89}, W.-X. Shi proved that if the curvature operator is bounded along
the Ricci flow, then all the derivatives of it are bounded uniformly except for the initial time.
If the boundedness condition of the full curvature operator is replaced by the one of Ricci curvature,
then it seems that Shi-type estimate does not hold.
However, if we impose the condition that weak Bianchi inequality holds, then we can 
derive a Shi-type estimate for Ricci curvature. 
Note that the weak Bianchi inequality is quite looser than the strong one,
because it allows $|\nabla Ric|\neq 0$ whenever $|\nabla R|=0$ at some point.

\begin{Thm*ShiG}[Global estimate]
There exists a constant $C>0$, depending only on $\alpha, \beta$ and $n$ such that
for every $n$-dimensional closed solution $(M^n,g(t))_{t\in[0,T)}$ of the Ricci flow,
if the Ricci curvature and its derivatives satisfy that
$|Ric|\leq K$ and $|\nabla Ric|\leq \alpha K t^{\frac{-1}{2}}+\beta|\nabla R|$
for all $t\in[0,\frac{1}{K}]\subset [0,T)$, where $K$ is a positive constant,
then $$|\nabla Ric|^2\leq C K^2t^{-1}$$ for all $t\in(0,\frac{1}{K}]$.
\end{Thm*ShiG}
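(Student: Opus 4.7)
The plan is to reduce the estimate for $|\nabla Ric|^2$ to an analogous one for $|\nabla R|^2$ via the weak Bianchi inequality, and then establish the latter by a Bernstein-type maximum-principle argument. The reason for routing through $R$ is that $R$ evolves by $(\partial_t-\Delta)R = 2|Ric|^2$, a source depending only on the bounded tensor $Ric$; by contrast, the Bochner identity for $|\nabla Ric|^2$ involves a term coupling $\nabla Ric$ to the full curvature operator $Rm$, which is not controlled under the hypotheses. The weak Bianchi hypothesis is precisely what enables the transfer.

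Concretely, Bochner's formula together with the evolution of $R$ gives
$$(\partial_t-\Delta)|\nabla R|^2 = -2|\nabla^2 R|^2 + 4\langle\nabla R,\nabla|Ric|^2\rangle \leq -2|\nabla^2 R|^2 + 8K|\nabla R||\nabla Ric|.$$
Substituting the weak Bianchi inequality and then the weighted estimate $2ab\leq K^{-1}a^2 + Kb^2$ yields
$$(\partial_t-\Delta)|\nabla R|^2 \leq 16\alpha^2 K^3 t^{-1} + (1+8\beta)K|\nabla R|^2,$$
after discarding the negative $|\nabla^2 R|^2$ term. The auxiliary function I would then test is $F := t|\nabla R|^2 + AR^2$ with $A = 1+4\beta$. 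Multiplying the previous inequality by $t$ and using $Kt\leq 1$ gives $(\partial_t-\Delta)(t|\nabla R|^2)\leq (2+8\beta)|\nabla R|^2 + 16\alpha^2 K^3$, whereas a direct computation yields $(\partial_t-\Delta)R^2 = 4R|Ric|^2 - 2|\nabla R|^2\leq 4\sqrt{n}\,K^3 - 2|\nabla R|^2$. The value of $A$ is calibrated so that the $-2A|\nabla R|^2$ contribution absorbs the positive $|\nabla R|^2$ term from the first piece, leaving
$$(\partial_t-\Delta)F \leq C_0 K^3, \qquad C_0 = C_0(\alpha,\beta,n).$$

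Since $M$ is closed, the parabolic maximum principle gives $\max_M F(\cdot,t)\leq \max_M F(\cdot,0) + C_0 K^3 t$. Using $R^2\leq nK^2$ at $t=0$ together with $K^3 t\leq K^2$ (from $t\leq 1/K$) yields $F\leq C_1 K^2$, so $|\nabla R|^2\leq C_1 K^2 t^{-1}$; inserting this back into the weak Bianchi inequality produces the desired $|\nabla Ric|^2\leq CK^2 t^{-1}$ with $C = C(\alpha,\beta,n)$. The main conceptual obstacle is the lack of any \emph{a priori} bound on $Rm$, which is what rules out a direct Shi-type iteration on $|\nabla Ric|^2$; the only delicate computational point is the choice of weight in the AM--GM step, which is tailored so that $Kt\leq 1$ converts the $K|\nabla R|^2$ term into a constant times $|\nabla R|^2$ after multiplication by $t$, keeping the final constant independent of $K$.
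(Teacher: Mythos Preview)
Your proposal is correct and follows essentially the same approach as the paper: both route through the evolution of $R$ via the weak Bianchi inequality, form the Bernstein-type quantity $F = t|\nabla R|^2 + AR^2$, choose $A$ to absorb the $|\nabla R|^2$ term, and apply the maximum principle on the closed manifold to obtain $F \leq CK^2$; the only differences are cosmetic (you track explicit constants such as $A = 1 + 4\beta$, whereas the paper leaves them implicit).
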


\begin{Rmk}
When $t=0$, we define $\frac{1}{t}$ to be $\infty$. Hence the aforementioned inequalities, which are concerned, hold trivially.
\end{Rmk}

We recall the following evolution equations that will be used in the proof:
$$\frac{\partial}{\partial t}R=\Delta R+2|Ric|^2,\ \ \frac{\partial}{\partial t} R^2 =2R (\Delta R+2|Ric|^2) = \Delta R^2 -2|\nabla R|^2+4R\cdot|Ric|^2$$
and
\begin{eqnarray*}
\frac{\partial}{\partial t} |\nabla R|^2
&=& 2 \langle \nabla R, \nabla ( \Delta R+2|Ric|^2 ) \rangle  -2Ric(\nabla R,\nabla R) \\
&\leq& \Delta |\nabla R|^2 -2|\nabla^2R|^2 +4|Ric|\cdot|\nabla R|^2 +8 |Ric|\cdot|\nabla Ric|\cdot|\nabla R|.
\end{eqnarray*}

\begin{proof}

Since $\frac{\partial}{\partial t}R=\Delta R+2|Ric|^2$, by $|Ric|\leq K$, we have
$$\frac{\partial}{\partial t} R^2 \leq \Delta R^2 -2|\nabla R|^2+ CK^3,$$
where $C$ is an indefinite constant varying line by line.
Moreover, using $|\nabla Ric|\leq \alpha K t^{\frac{-1}{2}}+\beta |\nabla R|$, we can derive
$$\frac{\partial}{\partial t} |\nabla R|^2 \leq \Delta |\nabla R|^2 -2|\nabla^2 R|^2+ CK|\nabla R|^2 + CK^3t^{-1}.$$

Let $F=t|\nabla R|^2+AR^2$ for some constant $A$.
We can show that $\frac{\partial}{\partial t} F \leq \Delta F +C K^3$ whenever $A$ is
larger than some constant depending only on $\beta$.
Comparing with the o.d.e. $\frac{d}{dt} \phi(t)=CK^3$, one can prove that $F\leq C(K^2+K^3t)$ by maximum principle.
Hence $|\nabla R|^2\leq CK^2t^{-1}$. By using the weak Bianchi inequality again, we have $|\nabla Ric|^2\leq CK^2t^{-1}$.
\end{proof}

To show the local version, we have to do more efforts. 
To abbreviate the notation, we define the parabolic region emanated from $B_r(x_0,0)$ as
$$\mathcal{P}(r;x_0,t_0) := \Omega \times(0,t_0], \mbox{ where }  \Omega\subset M \mbox{ is the topological region defined by } B_r(x_0,0).$$

\begin{Thm*ShiL}[Local estimate]
There exist positive constants $\theta_0$ and $C$ depending only on $\alpha, \beta, n$ and $\Lambda$ such that
for every solution $(M^n,g(t))_{t\in[0,\theta_0/K]}$ of the Ricci flow,
if $|Rm|\leq \Lambda$ on $B_r(x_0,0)$,
$|Ric|\leq K$ and $|\nabla Ric|\leq \alpha K \left(\frac{1}{r^2}+\frac{1}{t}+K\right)^{\frac{1}{2}}+\beta|\nabla R|$
on $\overline{\mathcal{P}(r;x_0,t_0)}$ for some $r\leq\sqrt{\theta_0/K}$ and $t_0\leq\theta_0/K$, then
$$|\nabla Ric|^2\leq CK^2\left(\frac{1}{r^2}+\frac{1}{t}+K\right)$$ on $\mathcal{P}(\frac{r}{\sqrt{2}};x_0,t_0) $.
\end{Thm*ShiL}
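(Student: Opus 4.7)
The plan is to localize the global maximum principle argument of Theorem \ref{ShiG} by inserting a space-time cutoff. The hypothesis $|Rm|\leq \Lambda$ on $B_r(x_0,0)$, combined with the smallness of $\theta_0$, plays a supporting role: by continuity (or by a preliminary short-time application of Shi's usual local estimate starting from the initial bound $\Lambda$) the curvature stays of order $\Lambda$ on a slightly smaller parabolic neighborhood for time $\lesssim \theta_0/K$, and consequently the distance function $d_{g(t)}(\cdot,x_0)$ distorts only by a bounded factor along the flow. This will let me build a cutoff $\eta(x,t)$, supported in $\overline{\mathcal{P}(r;x_0,t_0)}$, equal to $1$ on $\mathcal{P}(r/\sqrt{2};x_0,t_0)$, satisfying in the barrier sense $|\nabla\eta|^{2}\leq Cr^{-2}\eta$, $|\Delta\eta|\leq Cr^{-2}$, and $|\partial_{t}\eta|\leq Cr^{-2}$, where $C=C(n,\Lambda)$.

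With this cutoff in hand I would study the barrier
\begin{equation*}
F(x,t)=\eta^{2}\left(t|\nabla R|^{2}+AR^{2}\right),
\end{equation*}
where $A=A(\beta)$ is chosen large enough to reproduce the coercivity trick of Theorem \ref{ShiG}. Using the evolution equations for $R^{2}$ and $|\nabla R|^{2}$ displayed before the proof of Theorem \ref{ShiG}, together with the localized weak Bianchi inequality $|\nabla Ric|\leq \alpha K(r^{-2}+t^{-1}+K)^{1/2}+\beta|\nabla R|$ and $|Ric|\leq K$, the computation of $\partial_{t}F-\Delta F$ produces the good term $-2\eta^{2}|\nabla R|^{2}$ inherited from the global calculation, together with a bad cross term $\langle\nabla\eta^{2},\nabla(t|\nabla R|^{2}+AR^{2})\rangle$ generated by the cutoff. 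Cauchy--Schwarz absorbs this cross term into the good term at the cost of an additional factor of $r^{-2}(t|\nabla R|^{2}+AR^{2})$, and the remaining inhomogeneous contribution is bounded by $CK^{2}(r^{-2}+t^{-1}+K)$ after substituting the weak Bianchi inequality.

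The main obstacle will be the bookkeeping required to reconcile the $r^{-2}+t^{-1}+K$ scaling of the conclusion with every cutoff-induced error: choosing $\theta_0$ small enough to make $tK$ and $t/r^{2}$ uniformly bounded on the parabolic region, and choosing $A$ large enough to dominate both the $\beta|\nabla R|$ contribution from weak Bianchi and the cross term, are both necessary. Once these are settled, $F$ satisfies a parabolic differential inequality whose right-hand side is uniformly bounded by $CK^{2}$ on the support of $\eta$, and the maximum principle, compared against the constant supersolution $CK^{2}$ (noting that $F(\cdot,0)\leq AR^{2}(\cdot,0)\leq CK^{2}$ by the initial Ricci bound), gives $F\leq CK^{2}$ throughout. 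Restricting to $\mathcal{P}(r/\sqrt{2};x_0,t_0)$, where $\eta\equiv 1$, this rearranges to $|\nabla R|^{2}\leq CK^{2}(r^{-2}+t^{-1}+K)$, and a final application of the weak Bianchi hypothesis converts this into the advertised bound on $|\nabla Ric|^{2}$.
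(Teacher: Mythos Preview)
Your approach differs from the paper's and, as written, contains a genuine gap.

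The central problem is your assertion that ``choosing $\theta_0$ small enough [makes] $tK$ and $t/r^{2}$ uniformly bounded.'' The first is fine ($t\leq\theta_0/K$ gives $tK\leq\theta_0$), but the second is false: the hypothesis is $r\leq\sqrt{\theta_0/K}$, an \emph{upper} bound on $r$, so $r$ may be arbitrarily small relative to $\sqrt{t_0}$ and $t/r^2$ is scale-invariant and unbounded. This matters because after you absorb the cross term $\nabla\eta^{2}\cdot\nabla(t|\nabla R|^{2})$ via Cauchy--Schwarz against the good Hessian term $-2\eta^{2}t|\nabla^{2}R|^{2}$, you are left with a residual of order $r^{-2}t\,|\nabla R|^{2}$ (or $r^{-2}\eta\,t|\nabla R|^{2}$ if you use $|\nabla\eta|^{2}\leq Cr^{-2}\eta$). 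The only negative $|\nabla R|^{2}$ term available is $-2A\eta^{2}|\nabla R|^{2}$, and no fixed $A=A(\beta)$ can absorb $Cr^{-2}t|\nabla R|^{2}$ near the edge of the support of $\eta$ when $t/r^{2}$ is large. So the inequality $(\partial_t-\Delta)F\leq CK^{2}$ does not follow, and the maximum principle comparison with a constant collapses.

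A second, more minor, issue is the construction of $\eta$. To keep $|\Delta\eta|\leq Cr^{-2}$ along the flow you need control on $|\nabla Ric|$ (since $\partial_t\Gamma\sim\nabla Ric$), which is exactly what you are trying to prove. A preliminary Shi estimate from $|Rm|\leq\Lambda$ at $t=0$ only propagates curvature bounds for time of order $\Lambda^{-1}$, not $\theta_0/K$; since $\theta_0$ must be independent of $K$ you cannot force $\theta_0/K\leq c/\Lambda$.

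The paper avoids both issues by abandoning the linear test function and instead using a Bernstein-type quantity $S=(BK^{2}+R^{2})|\nabla R|^{2}$, which (after rescaling to $F=bSK^{-4}$) satisfies the \emph{nonlinear} inequality $(\partial_t-\Delta)F\leq -F^{2}+u^{2}$. The $-F^{2}$ term is what permits localization: one compares not against a constant but against an explicit barrier $H=cA^{2}\varphi^{-2}+d\,t^{-1}+K$ that blows up at the parabolic boundary, where $\varphi$ is a time-independent spatial cutoff built at $t=0$ using $|Rm|\leq\Lambda$. The circularity in controlling $|\nabla^{2}\varphi|$ along the flow is then closed by a continuity argument (Lemma~B): assuming $F\leq H$ up to time $\theta/K$, the resulting bound on $|\nabla R|$ feeds back to keep $\varphi|\nabla^{2}\varphi|\leq 2A^{2}$, which in turn keeps $H$ a supersolution (Lemma~A), and one bootstraps to a uniform $\theta_0$. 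If you want a multiplicative-cutoff proof, you would need either to replace $t|\nabla R|^{2}$ by $u^{-1}|\nabla R|^{2}$ (so that $r^{-2}u^{-1}\leq 1$) \emph{and} raise the power of $\eta$ enough to make every residual carry $\eta^{2}$, or to switch to the Bernstein quantity and exploit $-F^{2}$.
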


\begin{proof}

Recall that along the Ricci flow
$$\frac{\partial}{\partial t} R^2 = \Delta R^2 -2|\nabla R|^2+4R\cdot |Ric|^2$$
and
$$ \frac{\partial}{\partial t} |\nabla R|^2
\leq \Delta |\nabla R|^2 -2|\nabla^2R|^2 +4|Ric|\cdot|\nabla R|^2+8|Ric|\cdot|\nabla Ric|\cdot|\nabla R|.$$
Denoting $u=\frac{1}{r^2}+\frac{1}{t}+K$, by the assumptions and Yang's inequality, we have
$$ \frac{\partial}{\partial t} |\nabla R|^2
\leq \Delta |\nabla R|^2 -2|\nabla^2R|^2 + C_1K|\nabla R|^2+C_1K^3u,$$
for some constant $C_1>0$.

Let $S=(BK^2+R^2)\cdot|\nabla R|^2$, where $B>\max\{n^2+4nC_1^{-1},32n^2\}$ is a constant.
We derive
\begin{eqnarray*}
\frac{\partial}{\partial t} S
&=& \frac{\partial}{\partial t}R^2\cdot|\nabla R|^2+ (BK^2+R^2)\frac{\partial}{\partial t}|\nabla R|^2\\
&\leq&  (\Delta R^2 -2|\nabla R|^2+ 4R\cdot |Ric|^2)\cdot |\nabla R|^2\\
&&        + (BK^2+R^2) (\Delta|\nabla R|^2 -2|\nabla^2R|^2 + C_1K|\nabla R|^2+C_1K^3u)  \\
&\leq& \Delta S - 2\nabla R^2\cdot \nabla |\nabla R|^2 -2|\nabla R|^4   -2(B+n^2)K^2|\nabla^2R |^2 \\
&&     + (C_1B+C_1n^2+4n)K^3|\nabla R|^2+ C_1(B+n^2)K^5u\\
&\leq& \Delta S - 2\nabla R^2\cdot \nabla |\nabla R|^2 -2|\nabla R|^4   -2BK^2|\nabla^2R |^2 \\
&&     + 2C_1BK^3|\nabla R|^2+ 2C_1BK^5u.
\end{eqnarray*}

We want to control the bad terms $2\nabla R^2\cdot \nabla|\nabla R|^2$, whose sign is unknown,
and $2C_1BK^3|\nabla R|^2$, which may not be bounded.
Indeed, using the following two inequalities, they can be absorbed by the other terms:
$$\left|2\nabla R^2\cdot \nabla|\nabla R|^2\right| \leq 8 nK|\nabla R|^2\cdot\left|\nabla^2 R\right|
\leq \frac{1}{2}|\nabla R|^4 + 32n^2K^2\left|\nabla^2R\right|^2$$
and $$2C_1BK^3|\nabla R|^2\leq  \frac{1}{2}|\nabla R|^4 +  2C_1^2B^2K^6 \leq \frac{1}{2}|\nabla R|^4
+  \frac{2}{3}C_1^2B^2K^5u .$$
Since $B>32n^2$, substituting these two inequalities into the evolution equation of $S$,
we get
$$
\frac{\partial}{\partial t} S
\leq \Delta S -|\nabla R|^4+C_2B^2K^5u
\leq \Delta S -\frac{S^2}{4B^2K^4}+C_2B^2K^5u,$$
for some constant $C_2$.
Consider $F=bSK^{-4}$ with some constant $b:= \min\{\frac{1}{4B^2},\frac{1}{C_2B^2}\}$ depending only on $n$,
one can derive $$\frac{\partial}{\partial t} F \leq \Delta F - F^2+ u^2. $$

To proceed the proof by using maximum principle, we need a space-time cut-off function.
However, the standard way to construct such a function requires the bound of $|\nabla Ric|$, 
which is exactly what we want to derive here. (Because the evolution equation of $|\nabla^2\varphi|$, which can be seen in the proof of Lemma B afterwards, involves $\nabla Ric$.)
This problem occurs also in the proof of Shi's estimate. 
To tackle this, Hamilton \cite[Section 13]{Hamilton95} used a continuity argument and eventually showed that there exists a short time $\theta_0K^{-1}$ such that Shi's estimate holds. 
The first step is to take a cut-off function $\varphi$ on the initial manifold $M$ satisfying
$\{ x\in M | \varphi>0 \}= B_r(x_0,0)$, $\varphi=r$ in $B_{\frac{r}{\sqrt{2}}}(x_0,0)$, $0\leq\varphi\leq r < Ar$,
$|\nabla \varphi|\leq A$ and $\left|\nabla^2\varphi\right|\leq \frac{A}{r}$ for some constant $A>1$ depending only on $n$ and the initial curvature bound $\Lambda$. 
Extend $\varphi$ to be a space-time function by letting $\varphi$ be independent of time. 
Since the Ricci flow is smooth, by continuity, $|\nabla \varphi|^2\leq 2A^2$ and $\varphi|\nabla^2\varphi|\leq 2A^2$
holds on $\mathcal{P}(r;x_0,\theta_1/K)$ up to some time $\theta_1/K>0$.
Moreover, we can construct a barrier function $H$ which behaves well up to $t=\theta_1/K$.  
\begin{Lma*A}
Let $H=\frac{cA^2}{\varphi^2}+\frac{d}{t}+K$ for some constants $c =14+4n$
and $d=2(1+\theta_1)$.
Then $\frac{\partial}{\partial t} H > \Delta H - H^2+u^2$ on $\mathcal{P}(r;x_0,\theta_1/K)$.
\end{Lma*A}
By using the maximum principle, one can show that $H-F$ cannot vanish on $\mathcal{P}(p,r,\theta_1/K)$.
Hence $H-F>0$ on $\mathcal{P}(p,r,\theta_2/K)$ for some $\theta_2>\theta_1$.
Combining with the following lemma, we can show that $\theta_1$ has a uniform lower bound,
i.e., $\theta_1$ must be larger than or equal to the uniform constant $\theta_0$ described in the following lemma.
We may assume $\theta_1<1$ (otherwise the uniform lower bound $\theta_0$ can be simply taken to be $1$). 
\begin{Lma*B}
There exists a constant $\theta_0$ which depends only on $\alpha,\beta,n$ and $\Lambda$ such that
if $|Ric|\leq K$ and $F\leq H$ on $\mathcal{P}(r;x_0,\theta/K)$ for some $\theta\leq\theta_0$ and $r\leq\sqrt{\theta/K}$,
then $|\nabla \varphi|^2\leq 2A^2$ and $\varphi|\nabla^2\varphi|\leq 2A^2$ on $\mathcal{P}(r;x_0,\theta/K)$.
\end{Lma*B}
Indeed, suppose on the contrary that $\theta_1<\theta_0$, then this lemma tells us that
the estimates of derivatives of $\varphi$ hold for time beyond $\theta_1$.
This contradicts the definition of $\theta_1$.

Therefore, $F<H$ on $\mathcal{P}(r;x_0,\theta_0/K)$.
We conclude that
$$|\nabla R|^2 = \frac{FK^4}{b(BK^2+R^2)}
\leq  \frac{K^4}{bBK^2}\left(\frac{(14+4n)A^2}{\varphi^2}+\frac{2(1+\theta_1)}{t}+K\right)
\leq  CK^2\left(\frac{1}{\varphi^2}+\frac{1}{t}+K\right)$$
and $$|\nabla Ric|^2\leq \alpha^2K^2u+\beta^2|\nabla R|^2
\leq \alpha^2K^2u+CK^2\left(\frac{1}{\varphi^2}+\frac{1}{t}+K\right)
\leq CK^2\left(\frac{1}{r^2}+\frac{1}{t}+K\right)$$
for some $C$ depending only on $\alpha,\beta, n$ and $\Lambda$.
\end{proof}

Now we prove Lemma A and Lemma B.

\begin{proof}[Proof of Lemma A]
We show that $-\frac{\partial}{\partial t} H + \Delta H + u^2 < H^2$ by the following calculations.
Using $|\nabla \varphi|^2\leq 2A^2$, $\varphi|\nabla^2\varphi|\leq 2A^2$
and $t\leq \theta_1/K$.
\begin{eqnarray*}
     -\frac{\partial}{\partial t} H + \Delta H + u^2
&=& \frac{d}{t^2}  + cA^2\Delta \left(\frac{1}{\varphi^2}\right) + \left(\frac{1}{r^2}+\frac{1}{t}+K\right)^2\\
&\leq& \frac{d}{t^2}  + \frac{cA^2}{\varphi^4}( 6|\nabla \varphi|^2 -2 \varphi\Delta\varphi )
                   + \left(\frac{1}{r^2}+\frac{1}{t}+\frac{\theta_1}{t}\right)^2\\
&\leq& \frac{d}{t^2}  + \frac{cA^2}{\varphi^4}( 12A^2 + 4nA^2 )
                   + 2\left(\frac{1}{r^2}\right)^2+2\left(\frac{1+\theta_1}{t}\right)^2\\
&\leq& \frac{(12+4n)cA^4}{\varphi^4} + 2\left(\frac{A^2}{\varphi^2}\right)^2 + \frac{2(1+\theta_1)^2+d}{t^2}\\
&=& \frac{((12+4n)c+2)A^4}{\varphi^4} + \frac{ 2(1+\theta_1)^2+d }{t^2}.
\end{eqnarray*}
Choose $c =14+4n $ and $d =2(1+ \theta_1)$,
then we have
$$-\frac{\partial}{\partial t} H + \Delta H + u^2   \leq \frac{((12+4n)c+2)A^4}{\varphi^4} + \frac{ 2(1+\theta_1)^2+d }{t^2}
\leq \left(\frac{cA^2}{\varphi^2}\right)^2 + \left(\frac{d}{t}\right)^2     \leq H^2   $$
\end{proof}

\begin{proof}[Proof of Lemma B]
By definition, $\nabla \varphi = g^{ij}\varphi_j e_i= \varphi^i e_i$. Thus
$$\frac{\partial}{\partial t} |\nabla \varphi|^2 =\frac{\partial}{\partial t}(g^{ij}\varphi_i\varphi_j)
=2R_{pq}g^{ip}g^{jq}\varphi_i\varphi_j\leq 2K|\nabla\varphi|^2$$
whenever $|Ric|\leq K$.
Therefore, $|\nabla \varphi|^2\leq A^2e^{2Kt}\leq 2A^2$ when $t\leq \frac{\theta}{K}$ and $\theta\leq \log\sqrt{2}$.

By using Uhlenbeck's orthonormal frame $\{E_a\}$ (cf. \cite[p. 155]{Hamilton86}), which satisfies
$\frac{\partial}{\partial t} E_a^i=g^{ij}R_{jk}E_a^k$,
one can derive
\begin{eqnarray*}
\frac{\partial}{\partial t} \nabla_a\nabla_b\varphi
&=& \frac{\partial}{\partial t} E_a E_b\varphi - \frac{\partial}{\partial t} (\Gamma_{ab}^c E_c\varphi) \\
&=& R_{bc} \nabla_a\nabla_c\varphi +R_{da} \nabla_d\nabla_b\varphi - (\nabla_aR_{cb}+\nabla_bR_{ac}-\nabla_cR_{ab})E_c\varphi.
\end{eqnarray*}
Hence $$
\frac{\partial}{\partial t} \varphi |\nabla^2\varphi|
=     \varphi \frac{\partial}{\partial t} |\nabla^2\varphi|
\leq  C\varphi(|Ric||\nabla^2\varphi| + |\nabla Ric||\nabla\varphi|).$$
By the assumption $F=b(BK^2+R^2)K^{-4}|\nabla R|^2\leq H=\frac{cA^2}{\varphi^2}+\frac{d}{t}+K$
and the weak Bianchi inequality,
we have $$|\nabla R|^2  \leq \frac{K^4}{b(BK^2+R^2)}\left(\frac{cA^2}{\varphi^2}+\frac{d}{t}+K\right)
                       \leq \frac{K^2}{bB}\left(\frac{cA^2}{\varphi^2}+\frac{d+\theta_1}{t}\right)$$
and $$|\nabla Ric| \leq \alpha K\sqrt{\frac{1}{r^2}+\frac{1}{t}+K}
                       +\beta \frac{K}{\sqrt{{bB}}}\sqrt{\frac{cA^2}{\varphi^2}+\frac{d+\theta_1}{t}}
                  \leq CK\left(\sqrt{\frac{cA^2}{\varphi^2}+\frac{d+\theta_1}{t}}\right),$$
where $C$ depends on $\alpha,\beta$ and $n$.
Recall that $\varphi\leq r$, $A>1$, $c=14+4n$ and $d=2(1+\theta_1)<4$.
Hence
$$
\frac{\partial}{\partial t} \varphi |\nabla^2\varphi|
\leq  CK\varphi|\nabla^2\varphi|
       +CK|\nabla\varphi|\sqrt{cA^2+\frac{(d+\theta_1)\varphi^2}{t}} \leq CK\left( \varphi|\nabla^2\varphi|+ A+\frac{r}{\sqrt{t}}\right),
$$
where $C$ depends on $\alpha,\beta$ and $n$.
By comparing with the ordinary differential equation $\frac{d}{d t} \phi = CK\left( \phi+ A+\frac{r}{\sqrt{t}} \right)$,
as Hamilton did in \cite[pp. 45-46]{Hamilton95}, one can show that
$$\varphi |\nabla^2\varphi|\leq e^{CKt}(A^2+CK(At+2r\sqrt{t})).$$
Therefore, when $r\leq \sqrt{\frac{\theta}{K}}$ and $t\leq \frac{\theta}{K}$ for some
$\theta=\theta(\alpha,\beta,n, A)$, we have $\varphi |\nabla^2\varphi|\leq 2A^2$.

\end{proof}

\section{Further discussions on Bianchi inequalities}

In this section, we discuss the validity of Bianchi inequalities on a fixed Riemannian manifold.
For general Riemannian manifolds, the derivative of Ricci tensor can be decomposed as follows.
\begin{Thm} [Cf. {\cite[p. 288]{Hamilton82}} for $n=3$]
Let $E_{ijk}=a(g_{ij}\partial_k R+ g_{ik}\partial_jR)+bg_{jk}\partial_iR$
with $a=\frac{n-2}{2n^2+2n-4}$ and $b=\frac{1}{2}-a(n+1)$.
Then the decomposition $\nabla_i R_{jk}=E_{ijk}+F_{ijk}$ satisfies that
$g^{ij}F_{ijk}=g^{jk}F_{ijk}=g^{ki}F_{ijk}=0$ and $\langle E_{ijk},F_{ijk} \rangle=0$.
In particular, we have 
$$|\nabla_iR_{jk}|^2= |E_{ijk}|^2+ |F_{ijk}|^2\ \ \  \mbox{ and }\ \ \ |E_{ijk}|^2= (a+b)|\nabla R|^2.$$
\end{Thm}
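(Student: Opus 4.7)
My plan is direct computation, the key structural observation being that the coefficients $a,b$ are chosen precisely so that the three possible traces of $F_{ijk}$ vanish; from this alone everything else will fall out.

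First I would record the basic identities for $T_{ijk}:=\nabla_iR_{jk}$: symmetry of Ricci gives $T_{ijk}=T_{ikj}$; metric compatibility gives $g^{jk}T_{ijk}=\nabla_iR$; the contracted second Bianchi identity gives $g^{ij}T_{ijk}=\tfrac{1}{2}\nabla_kR$; and by the $(j,k)$-symmetry one also has $g^{ik}T_{ijk}=\tfrac{1}{2}\nabla_jR$. So $T$ has effectively only one trace, namely $\nabla R$, which is what makes a two-parameter ansatz for the ``pure-trace'' part $E$ natural. An analogous short computation yields
\[
g^{jk}E_{ijk}=(2a+bn)\nabla_iR,\qquad g^{ij}E_{ijk}=\bigl(a(n+1)+b\bigr)\nabla_kR,
\]
with the third trace $g^{ik}E_{ijk}$ giving the same equation by the $(j,k)$-symmetry. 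Requiring that $F=T-E$ be trace-free in each slot then forces the linear system $2a+bn=1$ and $a(n+1)+b=\tfrac{1}{2}$, whose unique solution is the declared pair $(a,b)$ (equivalently $b=n/(n^{2}+n-2)$).

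For the orthogonality and the norm identity, the same two trace calculations do all the work. Writing $u_i=\nabla_iR$, I would first evaluate $\langle E,T\rangle$ by transferring the metric factors $g^{ij},g^{ik},g^{jk}$ on $E^{ijk}$ onto $T_{ijk}$, producing exactly the three traces of $T$ just recalled; this gives
\[
\langle E,T\rangle=\tfrac{a}{2}|u|^{2}+\tfrac{a}{2}|u|^{2}+b\,|u|^{2}=(a+b)|\nabla R|^{2}.
\]
The symmetric maneuver---transferring the metric factors in $E_{ijk}$ onto $E^{ijk}$ and invoking the trace identities for $E$ just derived---yields $|E|^{2}=(a+b)|\nabla R|^{2}$ as well. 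Orthogonality $\langle E,F\rangle=\langle E,T\rangle-|E|^{2}=0$ is then automatic, and the Pythagorean identity $|\nabla_iR_{jk}|^{2}=|E_{ijk}|^{2}+|F_{ijk}|^{2}$ follows immediately.

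I do not anticipate any real obstacle: the statement is essentially linear algebra in the space of $3$-tensors symmetric in the last two indices, subjected to the single linear constraint provided by the contracted Bianchi identity. The only mild care required is bookkeeping---keeping straight that $g^{jk}T_{ijk}=\nabla_iR$ whereas $g^{ij}T_{ijk}=\tfrac{1}{2}\nabla_iR$, and verifying that the two linear equations for $a,b$ are consistent, which they must be precisely because $T$ has only the one independent trace $\nabla R$.
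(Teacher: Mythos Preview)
Your proof is correct and follows essentially the same approach as the paper: derive the two linear equations $(n+1)a+b=\tfrac12$ and $2a+nb=1$ from the trace-free conditions, then compute $\langle E,\nabla Ric\rangle=(a+b)|\nabla R|^2$ and $|E|^2=(a+b)|\nabla R|^2$ to obtain orthogonality. Your computation of $|E|^2$ is slightly slicker---you reuse the fact that the traces of $E$ equal those of $T$ by construction, whereas the paper expands $|E|^2=(2(n+1)a^2+4ab+nb^2)|\nabla R|^2$ directly and then verifies the algebraic identity $2(n+1)a^2+4ab+nb^2=a+b$---but this is a cosmetic difference, not a different route.
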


\begin{Rmk} 
When $n=3$, $a=\frac{1}{20}$, $b=\frac{3}{10}$ and $|E_{ijk}|^2=\frac{7}{20}|\nabla R|^2$;
when $n=4$, $a=\frac{1}{18}$, $b=\frac{2}{9}$ and $|E_{ijk}|^2=\frac{5}{18}|\nabla R|^2$.
\end{Rmk}

From this proposition, we know that a manifold satisfies the weak Bianchi inequality
if the trace-free part of $\nabla Ric$ can be bounded by the non-free part and a constant, 
i.e. $|F_{ijk}|^2 \leq |E_{ijk}|^2 + C$. 
Note that when $n\to\infty$, $a+b\to 0$ and thus $|\nabla Ric|\approx|F_{ijk}|$.

\begin{proof}
Using $g^{ij}F_{ijk}=g^{jk}F_{ijk}=0$ and the traced second Bianchi identity $\nabla_iR_{k}^{\ i}=\frac{1}{2}\partial_kR$, one can derive $(n+1)a+b=\frac{1}{2}$ and $2a+nb=1$. 
Thus $a=\frac{n-2}{2n^2+2n-4}$.

Furthermore, an easy computation shows that 
$|E_{ijk}|^2=\left(2(n+1)a^2+4ab+nb^2\right)|\nabla R|^2$ and 
$\langle E_{ijk},\nabla_iR_{jk}\rangle= (a+b)|\nabla R|^2 $.
Observing that $2(n+1)a^2+4ab+nb^2= a+b$, one obtains 
$\langle E_{ijk},F_{ijk} \rangle=\langle E_{ijk},\nabla_iR_{jk}-E_{ijk} \rangle=0$.
\end{proof}

On the other hand, we can compute explicitly on manifolds with rotationally symmetric metrics.

\begin{Thm}
Let $(M,g), g=dr^2+\varphi^2(r)g_{\mathbb{S}^{n-1}}$, be a rotationally symmetric $n$-dimensional manifold and $n\geq 3$.
Here $r$ is the arc-length parameter.
Denote the radial and spherical sectional curvatures as $K_0$ and $K_1$, respectively.
Suppose that
$\frac{\partial}{\partial r} K_0\cdot\frac{\partial}{\partial r}K_1\geq -\frac{C^2}{(n-1)^2-1}$
for some constant $C$. Then 
$$|\nabla Ric|^2\leq \left(\frac{1}{4}+\frac{1}{4(n-1)^2}\right) |\nabla R|^2  + (n-2)C^2$$ on $U$.
In particular, $(M,g)$ satisfies the strong Bianchi inequality $|\nabla Ric|\leq \frac{n}{2(n-1)}|\nabla R|$
whenever $\frac{\partial}{\partial r} K_0\cdot\frac{\partial}{\partial r}K_1 $ is nonnegative (e.g. a paraboloid or an infinite horn).
\end{Thm}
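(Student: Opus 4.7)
The plan is to reduce the claim to a purely algebraic inequality in the two scalars $K_0' := \partial_r K_0$ and $K_1' := \partial_r K_1$, by computing $|\nabla Ric|^2$ and $|\nabla R|^2$ explicitly in an orthonormal frame adapted to the warped product. First I would fix at a point $p \in M$ a frame $\{e_0, e_1, \dots, e_{n-1}\}$ with $e_0 = \partial_r$ and $\{e_a\}$ pulled back from an orthonormal frame on $\mathbb{S}^{n-1}$ chosen to be normal at the corresponding point. The warped-product Koszul formulas then give, at $p$,
\[
\nabla_{e_0} e_i = 0,\qquad \nabla_{e_a} e_0 = (\varphi'/\varphi)\, e_a,\qquad \nabla_{e_a} e_b = -(\varphi'/\varphi)\,\delta_{ab}\, e_0.
\]

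Using these together with $Ric(e_0, e_0) = (n-1)K_0$ and $Ric(e_a, e_a) = K_0 + (n-2)K_1$, a direct calculation of $(\nabla_{e_k} Ric)(e_i, e_j)$ produces only three families of non-vanishing components: $(n-1)K_0'$, $K_0' + (n-2)K_1'$, and the off-diagonal contribution $(\varphi'/\varphi)(n-2)(K_0 - K_1)$. The crucial observation is the identity
\[
K_1' \;=\; 2\,\frac{\varphi'}{\varphi}\,(K_0 - K_1),
\]
obtained by differentiating $K_1 = (1-(\varphi')^2)/\varphi^2$ and using $K_0 = -\varphi''/\varphi$. This collapses the off-diagonal term into a multiple of $K_1'$, so that $|\nabla Ric|^2$ becomes a homogeneous quadratic form purely in $(K_0', K_1')$ with no residual dependence on $\varphi'/\varphi$ or on $K_0 - K_1$ itself.

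Third, since $R = 2(n-1)K_0 + (n-1)(n-2)K_1$ gives $|\nabla R|^2 = (n-1)^2(2K_0' + (n-2)K_1')^2$, the desired inequality reduces to an algebraic claim about two explicit quadratic forms in $(K_0', K_1')$: the difference
\[
|\nabla Ric|^2 \;-\; \Bigl(\tfrac14 + \tfrac{1}{4(n-1)^2}\Bigr)|\nabla R|^2
\]
must be dominated by $(n-2)C^2$ whenever $K_0' K_1' \geq -C^2/((n-1)^2-1)$. The main technical obstacle is to rearrange this residual quadratic form so that its only ``bad'' contribution is the cross-term $-K_0' K_1'$ (which the hypothesis then absorbs), while the remaining terms assemble into a nonpositive completed square. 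The shape of the constant $(n-1)^2-1 = n(n-2)$ in the hypothesis is suggestive here, as it is precisely what appears once common denominators are cleared.

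Finally, the ``in particular'' statement is an immediate consequence of taking $C = 0$ under the assumption $K_0' K_1' \geq 0$ and noting the elementary inequality $(n-1)^2 + 1 \leq n^2$, which yields $\tfrac14 + \tfrac{1}{4(n-1)^2} \leq \tfrac{n^2}{4(n-1)^2}$, i.e.\ $|\nabla Ric| \leq \tfrac{n}{2(n-1)}|\nabla R|$.
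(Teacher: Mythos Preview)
Your frame computation is actually more careful than the paper's. You correctly identify the off-diagonal components $(\nabla_{e_a} Ric)(e_0,e_a)=(\varphi'/\varphi)(n-2)(K_0-K_1)$ and the identity $K_1'=2(\varphi'/\varphi)(K_0-K_1)$ that turns them into $\tfrac{n-2}{2}K_1'$. The paper's proof, by contrast, simply writes $|\nabla Ric|^2=(n-1)^2(K_0')^2+(K_0'+(n-2)K_1')^2$, which both drops these off-diagonal terms and fails to sum the diagonal contribution $(\nabla_{\partial_r}Ric)(e_a,e_a)$ over the $n-1$ spherical directions.

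Precisely because your computation is the correct one, however, the final algebraic step you propose will not go through. With all components accounted for one obtains
\[
|\nabla Ric|^2 \;=\; (n-1)^2(K_0')^2 \;+\; (n-1)\bigl(K_0'+(n-2)K_1'\bigr)^2 \;+\; \tfrac{(n-1)(n-2)^2}{2}(K_1')^2,
\]
and the residual becomes
\[
|\nabla Ric|^2-\tfrac{(n-1)^2+1}{4(n-1)^2}|\nabla R|^2
=(n-2)(K_0')^2-(n-2)^3K_0'K_1'+\tfrac{(n-2)^2}{4}\bigl(6(n-1)-(n-1)^2-1\bigr)(K_1')^2.
\]
The $(K_1')^2$ coefficient here is \emph{strictly positive} for $3\le n\le 6$, so the residual cannot be written as a nonpositive completed square plus a multiple of $-K_0'K_1'$. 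Indeed, setting $K_0'=0$ gives $K_0'K_1'=0\ge -C^2/((n-1)^2-1)$ trivially while the residual equals a positive multiple of $(K_1')^2$ and is unbounded. A concrete witness is $\varphi(r)=e^r$ in dimension $3$: then $K_0\equiv -1$, $K_0'=0$, $K_1'=-2e^{-2r}\neq 0$, and one computes $|\nabla Ric|^2=3(K_1')^2$, $|\nabla R|^2=4(K_1')^2$, so $|\nabla Ric|^2=\tfrac34|\nabla R|^2>\tfrac{9}{16}|\nabla R|^2$, violating the ``in particular'' clause even though $K_0'K_1'\ge 0$. In short, the stated inequality is false as written; the paper's argument only closes because of the undercount in its formula for $|\nabla Ric|^2$. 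Your strategy is the right one --- it simply proves a weaker inequality than the one claimed.
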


\begin{Rmk}
In this theorem, we do not assume that $|Ric|$ is bounded by some constant.
\end{Rmk}

\begin{proof}
It is well-known that for rotationally symmetric manifolds we have
$$Ric=(n-1)K_0 dr^2 + ( K_0+(n-2)K_1 )\varphi^2 g_{\mathbb{S}^{n-1}}$$
and $$R = (n-1)K_0 + (n-1)(K_0+(n-2)K_1).$$
Hence
$$|\nabla R|^2 
= 4(n-1)^2 \left(\frac{\partial}{\partial r}K_0\right)^2  
+ 4(n-1)^2(n-2)\left(\frac{\partial}{\partial r} K_0\right)\left(\frac{\partial}{\partial r}K_1 \right)
+(n-1)^2(n-2)^2 \left(\frac{\partial}{\partial r}K_1\right)^2  
$$
and
\begin{eqnarray*}
|\nabla Ric|^2 &=& (\nabla_1R_{11})^2+(\nabla_1R_{jj})^2\\
&=& (n-1)^2 \left(\frac{\partial}{\partial r} K_0 \right)^2
        +  \left(\frac{\partial}{\partial r} K_0+(n-2)\frac{\partial}{\partial r}K_1 \right)^2 \\
&\leq& \frac{(n-1)^2+1}{4(n-1)^2} |\nabla R|^2+ (n-2)\left( 1-(n-1)^2\right)\left(\frac{\partial}{\partial r} K_0\right)\left(\frac{\partial}{\partial r}K_1 \right)\\
&& +\frac{(n-2)^2}{4}\left(3-(n-1)^2\right)\left(\frac{\partial}{\partial r}K_1\right)^2\\
&\leq& \frac{(n-1)^2+1}{4(n-1)^2} |\nabla R|^2+ (n-2)C^2 .
\end{eqnarray*}
\end{proof}

It is interesting to know whether such Bianchi inequalities hold for generic solutions of the Ricci flow.
A related result which appeared earlier in a collaborated work of A. Deruelle and the author \cite[Theorem 2.10]{ChenDeruelle15}
shows that the validity of such inequalities may help us to resolve some long standing open problems about expanding gradient Ricci solitons.

\bibliography{refer_article}
\bibliographystyle{alpha}

\end{document}